\newcommand{\ch}[1]{{\mbox{\raise 1pt\hbox{\Large$\chi$}}}_{\lower 1pt\hbox{$\scriptstyle #1$}}}
\def\1{\raisebox{2pt}{\rm{$\chi$}}}
\newtheorem{theorem}{Theorem}[section]
\newtheorem{corollary}[theorem]{Corollary}
\newtheorem{lemma}[theorem]{Lemma}
\theoremstyle{definition}
\newtheorem{definition}[theorem]{Definition}
\newtheorem{example}[theorem]{Example}
\theoremstyle{definition}
\newtheorem{remark}[theorem]{Remark}
\newcommand{\R}{{\mathbb R}}
\newcommand{\N}{{\mathbb N}}
\newcommand{\Z}{{\mathbb Z}}
\newcommand\diam{\operatorname{diam}}
\DeclareMathOperator{\codima}{{codim}_A}
\DeclareMathOperator{\Mu}{{Mu}}
\DeclareMathOperator{\udimm}{\overline{dim}_M}
\DeclareMathOperator{\dima}{dim_A}
\def\cprime{$'$}
\DeclareMathOperator*{\esssup}{ess\, sup}
\DeclareMathOperator*{\essinf}{ess\, inf}
\gdef\eeaa#1pt{#1}}      
\def\accentadjtext#1{\setbox0\hbox{$#1$}\kern   
                \expandafter\eeaa\the\fontdimen1\textfont1 \ht0 }
\def\accentadjscript#1{\setbox0\hbox{$#1$}\kern 
                \expandafter\eeaa\the\fontdimen1\scriptfont1 \ht0 }
\def\accentadjscriptscript#1{\setbox0\hbox{$#1$}\kern   
                \expandafter\eeaa\the\fontdimen1\scriptscriptfont1 \ht0 }
\def\accentadjtextback#1{\setbox0\hbox{$#1$}\kern       
                -\expandafter\eeaa\the\fontdimen1\textfont1 \ht0 }
\def\accentadjscriptback#1{\setbox0\hbox{$#1$}\kern     
                -\expandafter\eeaa\the\fontdimen1\scriptfont1 \ht0 }
\def\accentadjscriptscriptback#1{\setbox0\hbox{$#1$}\kern 
                -\expandafter\eeaa\the\fontdimen1\scriptscriptfont1 \ht0 }
\def\itoverline#1{{\mathsurround0pt\mathchoice
        {\rlap{$\accentadjtext{\displaystyle #1}
                \accentadjtext{\vrule height1.593pt}
                \overline{\phantom{\displaystyle #1}
                \accentadjtextback{\displaystyle #1}}$}{#1}}
        {\rlap{$\accentadjtext{\textstyle #1}
                \accentadjtext{\vrule height1.593pt}
                \overline{\phantom{\textstyle #1}
                \accentadjtextback{\textstyle #1}}$}{#1}}
        {\rlap{$\accentadjscript{\scriptstyle #1}
                \accentadjscript{\vrule height1.593pt}
                \overline{\phantom{\scriptstyle #1}
                \accentadjscriptback{\scriptstyle #1}}$}{#1}}
        {\rlap{$\accentadjscriptscript{\scriptscriptstyle #1}
                \accentadjscriptscript{\vrule height1.593pt}
                \overline{\phantom{\scriptscriptstyle #1}
                \accentadjscriptscriptback{\scriptscriptstyle #1}}$}{#1}}}}
\newcommand{\iol}{\itoverline}
\def\1{\raisebox{2pt}{\rm{$\chi$}}}
\def\vint_#1{\mathchoice%
        {\mathop{\kern 0.2em\vrule width 0.6em height 0.69678ex depth -0.58065ex
                \kern -0.8em \intop}\nolimits_{\kern -0.4em#1}}%
        {\mathop{\kern 0.1em\vrule width 0.5em height 0.69678ex depth -0.60387ex
                \kern -0.6em \intop}\nolimits_{#1}}%
        {\mathop{\kern 0.1em\vrule width 0.5em height 0.69678ex
            depth -0.60387ex
                \kern -0.6em \intop}\nolimits_{#1}}%
        {\mathop{\kern 0.1em\vrule width 0.5em height 0.69678ex depth -0.60387ex
                \kern -0.6em \intop}\nolimits_{#1}}}
\def\vintslides_#1{\mathchoice%
        {\mathop{\kern 0.1em\vrule width 0.5em height 0.697ex depth -0.581ex
                \kern -0.6em \intop}\nolimits_{\kern -0.4em#1}}%
        {\mathop{\kern 0.1em\vrule width 0.3em height 0.697ex depth -0.604ex
                \kern -0.4em \intop}\nolimits_{#1}}%
        {\mathop{\kern 0.1em\vrule width 0.3em height 0.697ex depth -0.604ex
                \kern -0.4em \intop}\nolimits_{#1}}%
        {\mathop{\kern 0.1em\vrule width 0.3em height 0.697ex depth -0.604ex
                \kern -0.4em \intop}\nolimits_{#1}}}
\newcommand{\intav}{\vint}
\newcommand{\dist}{\operatorname{dist}}
\title[Weak porosity and $A_p$]{Weakly porous sets and\\Muckenhoupt $A_p$ distance functions}
\author[T.C.\! Anderson]{Theresa C. Anderson}
\address[T.C.A.]{Carnegie Mellon University, Wean Hall, Hammerschlag Dr., Pittsburgh, PA 15213, USA} \email{tanders2@andrew.cmu.edu}
\author[J.\! Lehrb\"ack]{Juha Lehrb\"ack}
\address[J.L.]{University of Jyvaskyla, Department of Mathematics and Statistics, P.O. Box 35, FI-40014 University of Jyvaskyla, Finland} \email{juha.lehrback@jyu.fi}
\author[C.\! Mudarra]{Carlos Mudarra}
\address[C.M.]{University of Jyvaskyla, Department of Mathematics and Statistics, P.O. Box 35, FI-40014 University of Jyvaskyla, Finland} \email{carlos.mudarra@ntnu.no}
\author[A.V.\! V\"ah\"akangas]{Antti V. V\"ah\"akangas}
\address[A.V.V.]{University of Jyvaskyla, Department of Mathematics and Statistics, P.O. Box 35, FI-40014 University of Jyvaskyla, Finland} \email{antti.vahakangas@iki.fi}
\thanks{T.C.A was supported in part by an NSF graduate research fellowship, NSF DMS-2231990 and NSP DMS-1954407.  She thanks Tuomas Hyt\"onen for an invitation to visit the University of Helsinki where this project originated. C.M. was supported by the Academy of Finland via the projects \textit{Geometric Aspects of Sobolev Space Theory} (grant No. 314789) and \textit{Incidences on Fractals} (grant No. 321896).}
\subjclass[2020]{
	42B99    
	(28A75,  
	28A80,
	42B25)}  
\keywords{Muckenhoupt weight, weak porosity, distance function, Muckenhoupt exponent, dyadic cube, fractals}
\begin{document}

\begin{abstract}
We consider the class of weakly porous sets in Euclidean spaces.
As our first main result we show that the
distance weight $w(x)=\dist(x,E)^{-\alpha}$ belongs to the Muckenhoupt class $A_1$, for some $\alpha>0$,
if and only if $E\subset\mathbb{R}^n$ is weakly porous.
We also give a precise quantitative version of this characterization in terms of
the so-called Muckenhoupt exponent of $E$. When $E$ is weakly porous, 
we obtain a similar quantitative characterization of $w\in A_p$, for $1<p<\infty$, as well. 
At the end of the paper, we give an example of a set $E\subset\mathbb{R}$
which is not weakly porous but for which $w\in A_p\setminus A_1$
for every $0<\alpha<1$ and $1<p<\infty$. 
\end{abstract}

\maketitle

\section{Introduction}

Let $E\subsetneq\R^n$, $n\in\N$,  be a nonempty set. 
We are interested in the  Muckenhoupt $A_p$  properties of the weights
\[
w(x)= w_{\alpha,E}(x)= \dist(x,E)^{-\alpha}, \qquad x\in \R^n,
\]
where $\alpha\in\R$.
Previously, these properties 
have been studied, for instance, in 
\cite{Aikawa1991,MR3215609,MR2606245,DydaEtAl2019,Horiuchi1991,MR1402671}.
It is known, by~\cite[Corollary~3.8(b)]{DydaEtAl2019}, that
if the set $E$ is porous, then
$w_{\alpha,E}$  belongs to the Muckenhoupt class $A_1$ if and only if
$0\le\alpha<n-\dima(E)$;
here $\dima(E)$ is the Assouad dimension of $E$. Since
$\dima(E)<n$ if and only if $E\subset\R^n$ is porous
(see e.g.~\cite[Section~5]{Luukkainen1998}), it follows in particular that for each
porous set $E\subset\R^n$ there exists some $\alpha>0$ such that
$w_{\alpha,E}$  is an $A_1$ weight.

The results in~\cite{DydaEtAl2019} do not apply for nonporous sets,
but the bound $0\le\alpha<n-\dima(E)$ for admissible $\alpha$ might
suggest that  $w_{\alpha,E}$ cannot be an $A_1$ weight for any $\alpha>0$ if $E\subset\R^n$
is not porous, since then $\dima(E)=n$.
However, Vasin showed in~\cite{Vasin2003} that if $E$ is a subset 
of the unit circle $\mathbb{T}\subset\R^2$, then 
the weight  $w_{\alpha,E}$ belongs to the class $A_1(\mathbb{T})$, for some $\alpha>0$,
if and only if $E$ is \emph{weakly porous}; see Section~\ref{sect.wp}
for the definition and commentary concerning this condition.

The definition of weak porosity in~\cite{Vasin2003} is rather
specific to the one-dimensional case. Our  first  goal in this paper is to
extend both this condition and the related characterization of the $A_1$
property of the weight $\dist(\cdot,E)^{-\alpha}$.
The underlying ideas are  in principle similar to those in Vasin~\cite{Vasin2003},
but the higher dimensional case requires several  nontrivial modifications. 
In particular, we use dyadic definitions and tools, including a type of dyadic iteration, 
that lead to efficient and natural proofs. 

Our first main result can be stated as follows.

\begin{theorem}\label{thm.main_intro}
Let $E\subsetneq \R^n$ be a  nonempty  set.
Then $\dist(\cdot,E)^{-\alpha}\in A_1$, for some $\alpha>0$, if and only if
$E$ is weakly porous. 
\end{theorem}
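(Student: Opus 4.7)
The theorem asserts the equivalence of a geometric condition on $E$ with a Muckenhoupt $A_1$ property of the distance weight. I plan to prove both implications using dyadic cubes, where both the $A_1$ inequality and weak porosity admit convenient formulations, and to transfer the results to Euclidean balls by routine comparison arguments. The key quantitative output of weak porosity that I will lean on, at each scale, is a bounded collection of pairwise disjoint sub-cubes of an ambient cube $Q$ with $Q\cap E\ne\emptyset$, all disjoint from $E$, of side length comparable to $\ell(Q)$, and together capturing a fixed positive fraction of $|Q|$.

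For the sufficiency direction, fix a dyadic cube $Q$ meeting $E$. Weak porosity furnishes pairwise disjoint sub-cubes $Q_1,\dots,Q_k\subset Q$, disjoint from $E$, with $\ell(Q_i)\gtrsim\ell(Q)$ and $\sum_i|Q_i|\ge c|Q|$ for some $c>0$. On each $Q_i$ the weight satisfies $\dist(\cdot,E)^{-\alpha}\lesssim\ell(Q)^{-\alpha}$, so these cubes contribute at most $C|Q|\ell(Q)^{-\alpha}$ to the integral $\int_Q\dist(\cdot,E)^{-\alpha}\,dx$. The residual set $Q\setminus\bigcup_i Q_i$ has measure at most $(1-c)|Q|$; I would decompose it into its maximal dyadic sub-cubes still meeting $E$ and iterate the argument on each. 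At generation $j$ the remaining mass is at most $(1-c)^j|Q|$ while the gaps extracted then lie at distance $\gtrsim 2^{-j}\ell(Q)$ from $E$, so their contribution is of order $((1-c)2^\alpha)^j|Q|\ell(Q)^{-\alpha}$; summing the geometric series once $\alpha>0$ is small enough that $(1-c)2^\alpha<1$ yields $\int_Q\dist(\cdot,E)^{-\alpha}\,dx\le C|Q|\ell(Q)^{-\alpha}$. The lower bound $\essinf_Q\dist(\cdot,E)^{-\alpha}\gtrsim\ell(Q)^{-\alpha}$, valid because $Q\cap E\ne\emptyset$ forces $\dist(\cdot,E)\le\diam(Q)$ on $Q$, then completes the $A_1$ inequality on $Q$; cubes far from $E$ are trivial.

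For the converse, assume $\dist(\cdot,E)^{-\alpha}\in A_1$ for some $\alpha>0$, and let $Q$ be a cube meeting $E$. With $D_Q=\sup_{x\in Q}\dist(x,E)$, the $A_1$ condition, specialized to the distance weight, gives
\[
\int_Q\dist(x,E)^{-\alpha}\,dx\le C|Q|\,D_Q^{-\alpha}.
\]
A Chebyshev argument applied to this estimate produces $|\{x\in Q:\dist(x,E)<\lambda D_Q\}|\le C\lambda^\alpha|Q|$ for every $\lambda\in(0,1)$. Choosing $\lambda$ small enough that $C\lambda^\alpha<1/2$ leaves at least half of $Q$ at distance $\ge\lambda D_Q$ from $E$, and a Whitney-type extraction inside this subset produces the pairwise disjoint sub-cubes, disjoint from $E$ and of sizes comparable to $D_Q$, required by weak porosity.

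The main difficulty I anticipate lies in the scale matching of the converse direction: the $A_1$ inequality directly produces gaps at the effective scale $D_Q$, which may be strictly smaller than $\ell(Q)$, so a naive count of extracted gap cubes inside $Q$ could a priori grow like $(\ell(Q)/D_Q)^n$. Obtaining a uniformly bounded count of gaps with sharp quantitative dependence on the weak porosity parameters, as captured by the Muckenhoupt exponent of $E$ alluded to in the abstract, should require either a careful choice of reference cube on which $D_Q\sim\ell(Q)$ or a formulation of weak porosity directly at the effective scale $D_Q$; this is where I expect the bulk of the technical work to concentrate.
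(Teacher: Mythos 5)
Your converse direction ($A_1\Rightarrow$ weak porosity) is essentially sound and close in spirit to the paper's: from the $A_1$ inequality and $\essinf_Q \dist(\cdot,E)^{-\alpha}=D_Q^{-\alpha}$ you get $|\{x\in Q:\dist(x,E)<\lambda D_Q\}|\le C\lambda^\alpha |Q|$, and a single-scale dyadic extraction at side length comparable to $\lambda D_Q$ inside the complementary set does produce disjoint $E$-free dyadic subcubes of total measure $\ge\tfrac12|Q|$; since any $E$-free subcube of $Q$ has side at most $2D_Q$, these cubes automatically satisfy $|Q_i|\ge\delta|\mathcal{M}(Q)|$ with $\delta=\delta(n,\lambda)$. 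The "scale matching" difficulty you flag is not actually an obstacle: Definition~\ref{definitionweakporosity} places no uniform bound on the number $N$ of free cubes, only on their size relative to $\mathcal{M}(Q)$ and their total measure, so a count growing like $(\ell(Q)/D_Q)^n$ is harmless.

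The genuine gap is in the sufficiency direction, where your sketch tacitly replaces weak porosity by porosity. Weak porosity gives free cubes $Q_i$ with $|Q_i|\ge\delta|\mathcal{M}(Q)|$, where $\mathcal{M}(Q)$ is the largest $E$-free dyadic subcube; it does \emph{not} give $\ell(Q_i)\gtrsim\ell(Q)$, and the pointwise bound $\dist(\cdot,E)^{-\alpha}\lesssim\ell(Q)^{-\alpha}$ on the $Q_i$ fails (one can only bound the \emph{integral} over a free cube via the distance to its boundary, and only when $\alpha<n$ suitably normalized). Worse, the target inequality $\frac{1}{|Q|}\int_Q\dist(x,E)^{-\alpha}\,dx\lesssim\ell(Q)^{-\alpha}$ is false for weakly porous non-porous sets: for $E=\Z$ and $Q=[0,N)$ the left side is of order a constant while the right side is $N^{-\alpha}$. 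Both sides of the $A_1$ inequality must instead be calibrated to $\ell(\mathcal{M}(Q))$ (the matching lower bound $\dist(x,E)\le 2\sqrt n\,\ell(\mathcal{M}(Q))$ on all of $Q$ comes from maximal free dyadic cubes having non-free parents). Finally, your geometric-series step assumes the gaps extracted at generation $j$ sit at distance $\gtrsim 2^{-j}\ell(Q)$ from $E$; there is no such dyadic scale decay under weak porosity. What is actually needed, and what carries the paper's iteration, is a quantitative control of how fast the largest free cube can shrink when passing to the residual cubes, namely Lemma~\ref{lemma.parent}\,\ref{it2.parent}: $|\mathcal{M}(P)|\ge\sigma\delta\,|\mathcal{M}(R)|$ for residual children $P$ of $R$, a nontrivial consequence of weak porosity. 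With it the generation-$j$ gaps have size at least $(\sigma\delta)^j|\mathcal{M}(Q)|$ up to constants, and one closes the series by choosing $\alpha=\gamma n$ so small that $(1-c)(\sigma\delta)^{-\gamma}<1$. Without this lemma (or a substitute), your iteration does not converge to the required bound, so the sufficiency half of the proof is missing its key ingredient.
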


One consequence of Theorem~\ref{thm.main_intro} is that if $E\subsetneq \R^n$ is 
weakly porous, then $\dist(\cdot,E)^{-\alpha}$ is locally integrable for some
$\alpha>0$. This implies that the upper Minkowski dimension of
$E\cap B(x,r)$ is strictly less than $n$ for every $x\in\R^n$ and $r>0$;
see Remark~\ref{r.minkowski} for more details.

Theorem~\ref{thm.main_intro}  
is quantitative in the sense that $\alpha$ and the constants in the $A_1$ and weak porosity
conditions only depend on each other and $n$.
More precise dependencies are given in Lemma~\ref{lemma.necessity}
and Lemma~\ref{lemma.sufficiency}, which prove the necessity and sufficiency in 
Theorem~\ref{thm.main_intro}, respectively.

A closely related question is to quantify the precise range of
exponents  $\alpha\in\R$  for which
the weight $w_{\alpha,E}(x)=\dist(x,E)^{-\alpha}$  
belongs to the Muckenhoupt class $A_p$ for a given $1\le p<\infty$. 
If $E\subset\R^n$ is porous, it 
follows from~\cite[Corollary~3.8]{DydaEtAl2019}
that 
$w_{\alpha,E}\in A_1$ if and only if $0\le \alpha<n-\dima(E)$, and 
$w_{\alpha,E}\in A_p$, for $1<p<\infty$,
if and only if
\[
(1-p)(n-\dima(E))<\alpha<n-\dima(E).
\]
In this paper we obtain the following extension of~\cite[Corollary~3.8]{DydaEtAl2019}  for weakly porous sets,
given in terms of the \emph{Muckenhoupt exponent} $\Mu(E)$ that 
we introduce in Definition~\ref{d.udima_def}. For a porous set $E\subset\R^n$ it holds that
$\Mu(E)=n-\dima(E)$, see Section~\ref{s.upper_assouad} for details.

\begin{theorem}\label{t.A_p_char}
Assume that $E\subset \R^n$ is a weakly porous set.
Let $\alpha\in\R$ and define $w(x)=\dist(x,E)^{-\alpha}$ for every $x\in\R^n$. 
Then 
\begin{enumerate}[label=\textup{(\roman*)}]
\item $w\in A_1$ if and only if $0\le \alpha < \Mu(E)$.
\item $w\in A_p$, for $1<p<\infty$, if and only if 
\begin{equation}\label{e.a_p_muckexponent}
(1-p)\Mu(E) < \alpha < \Mu(E).
\end{equation}
\end{enumerate}
\end{theorem}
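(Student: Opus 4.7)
The plan is to build on Theorem~\ref{thm.main_intro} and its quantitative refinements (Lemmas~\ref{lemma.necessity} and~\ref{lemma.sufficiency}) to establish part (i) directly, and then to extend to $A_p$ for $1<p<\infty$ via the Jones factorization theorem (for sufficiency) and a duality argument (for necessity).

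Part (i) is essentially obtained by unpacking Definition~\ref{d.udima_def}: the Muckenhoupt exponent $\Mu(E)$ is set up so that the inequality $\alpha<\Mu(E)$ corresponds to precisely the quantitative weak porosity hypothesis Lemma~\ref{lemma.sufficiency} requires to place $w_{\alpha,E}$ in $A_1$, while Lemma~\ref{lemma.necessity} provides the matching converse. The exclusion of $\alpha<0$ is an easy observation using $E\subsetneq\R^n$: for $x$ very close to $E$ the value $\dist(x,E)^{|\alpha|}$ is arbitrarily small, but the averages of $w$ on balls around $x$ that reach into $\R^n\setminus E$ remain bounded below by a fixed positive quantity, which violates the pointwise $A_1$ condition.

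For the sufficiency in (ii), I apply the easy direction of Jones' factorization theorem. Given $\alpha\in((1-p)\Mu(E),\Mu(E))$, set $\alpha_1=\max(\alpha,0)$ and $\alpha_2=\max(-\alpha,0)/(p-1)$; both exponents lie in $[0,\Mu(E))$ and satisfy $\alpha=\alpha_1-(p-1)\alpha_2$. By part (i), $w_i(x):=\dist(x,E)^{-\alpha_i}$ belongs to $A_1$ for $i=1,2$, and the factorization $w=w_1\cdot w_2^{1-p}$ immediately places $w$ in $A_p$.

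For the necessity in (ii), I use the duality $w\in A_p\iff w^{1-p'}\in A_{p'}$, which identifies the dual weight as $w^{1-p'}=\dist(\cdot,E)^{\alpha/(p-1)}$. Testing the $A_p$ condition on small balls $B$ centered at points of $E$, the trivial bound $\dist(x,E)\le r$ for $x\in B$ controls one of the two factors by a concrete power of the radius and reduces the other to the type of average estimate governed by Lemma~\ref{lemma.necessity}. Carrying out this reduction yields the two bounds $\alpha<\Mu(E)$ and $-\alpha/(p-1)<\Mu(E)$, which together rearrange to~\eqref{e.a_p_muckexponent}. The main obstacle lies precisely here: one cannot invoke the $A_1$ necessity from (i) directly, and both thresholds must be extracted from a single balanced $A_p$ inequality, so the quantitative dyadic analysis behind Lemma~\ref{lemma.necessity} has to be adapted to the $A_p$ setting so that the two sides of the average produce the sharp exponents $\Mu(E)$ and $(1-p)\Mu(E)$.
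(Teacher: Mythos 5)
Your proposal has genuine gaps in both parts, even though the overall architecture (reduce (ii) to (i), handle negative exponents by duality/factorization) matches the paper's. For part (i), it is not true that $\alpha<\Mu(E)$ "corresponds precisely" to the hypothesis of Lemma~\ref{lemma.sufficiency}: that lemma only produces \emph{one} unspecified exponent $\alpha(c,\delta,n)>0$, generally far below $\Mu(E)$, so it cannot give the sufficiency on the whole range $0\le\alpha<\Mu(E)$; the paper proves this separately (Theorem~\ref{t.aikawa_assouad2}) by decomposing a ball centered on $E$ into the level sets $\{2^{-j}r<\dist(\cdot,E)\le 2^{1-j}r\}$ and summing the measure bound from Definition~\ref{d.udima_def}. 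Likewise, Lemma~\ref{lemma.necessity} only yields weak porosity, i.e.\ $\Mu(E)>0$ (Corollary~\ref{c.wp_and_mu}), not the quantitative bound $\alpha<\Mu(E)$; the paper gets $\alpha\le\Mu(E)$ by testing the $A_1$ condition on cubes centered at $E$ against the free-ball size $h_E$ (Lemma~\ref{l.aikawa_assouad1}) and then excludes the endpoint via the reverse H\"older self-improvement of $A_1$ (Theorem~\ref{t.aikawa_assouad}). Your sketch contains neither the sharp sufficiency argument nor the endpoint exclusion.

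For the necessity in (ii), the reduction you outline does not work as stated, and you yourself leave the key step unresolved. Testing the balanced $A_p$ inequality on a ball $B$ centered at $E$, the factor $\vint_B d_E^{-\alpha}$ is indeed bounded below by $r^{-\alpha}\lvert E_r\cap B\rvert/\lvert B\rvert$, but the conjugate factor $\bigl(\vint_B d_E^{\alpha/(p-1)}\bigr)^{p-1}$, if bounded below using only the largest $E$-free ball, produces $h_E(B)^{\alpha}$ only up to a loss of $\bigl(R/h_E(B)\bigr)^{n(p-1)}$, which is unbounded for weakly porous sets that are not porous; so the sharp threshold $\Mu(E)$ does not come out. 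Moreover, any argument that extracts \eqref{e.a_p_muckexponent} from the $A_p$ condition without using weak porosity quantitatively is doomed: the set of Theorem~\ref{examplestatements} has $\Mu(E)=0$ yet $d_E^{-\alpha}\in A_p$ for all $0<\alpha<1$ and $1<p<\infty$. The paper's mechanism is different and is the crux of the theorem: weak porosity supplies some $\sigma>0$ with $d_E^{-\sigma}\in A_1$ (Lemma~\ref{lemma.sufficiency}), and the elementary Lemma~\ref{lem.exponentA1Ap} (Jensen plus choosing $q$ large with $1/(q-1)\le\sigma/\alpha$) upgrades $w\in A_p$ with $\alpha>0$ to $w\in A_1$, after which Theorem~\ref{t.aikawa_assouad} gives $\alpha<\Mu(E)$; the case $\alpha<0$ follows by applying this to $w^{1-p'}\in A_{p'}$. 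Your sufficiency argument in (ii) via the factorization $w=w_1w_2^{1-p}$ with $w_1,w_2\in A_1$ is correct (given part (i)) and is essentially the same as the paper's duality step, but the necessity, as proposed, is missing the idea that actually makes it true.
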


If we omit the special case $\alpha=0$, in which the connection to the
geometry of $E$ is lost, then in part~(i) of Theorem~\ref{t.A_p_char} the assumption that
$E$ is weakly porous is actually superfluous, and we have the following full characterization.

\begin{theorem}\label{t.A_1_char}
Assume that $E\subset \R^n$ is a nonempty set.
Let $\alpha\in\R\setminus\{0\}$ and define $w(x)=\dist(x,E)^{-\alpha}$ for every $x\in\R^n$. 
Then $w\in A_1$ if and only if $0 < \alpha < \Mu(E)$.
\end{theorem}

By combining Theorems~\ref{thm.main_intro} and~\ref{t.A_1_char}, 
we see that $E$ is weakly porous if and only if $\Mu(E)>0$; cf.\
Corollary~\ref{c.wp_and_mu} and Remark~\ref{r.notnec} for related comments.

Theorem~\ref{t.A_1_char}
raises the question whether also~\eqref{e.a_p_muckexponent} could provide
a full characterization of $w_{\alpha,E}\in A_p$ when $\alpha\neq 0$ and $1<p<\infty$.
In Section~\ref{s.apnwp} we show that this is \emph{not} the case, 
by giving a nontrivial construction of a set $E\subset\R^n$ which is 
not weakly porous (whence $\Mu(E)=0$)
but still $w_{\alpha,E}\in A_p$ for all $0<\alpha<1$ and all $1<p<\infty$.
This set illustrates the delicate interplay between the Muckenhoupt conditions and the
distance functions, and also gives a novel type of an example of weights which 
are in $A_p$ for all $1<p<\infty$ but not in $A_1$. 
Nevertheless, a full characterization of sets $E\subset\R^n$ for which
$w_{\alpha,E}\in A_p$ for some (or all) $1<p<\infty$ remains an open question.

Another interesting consequence of Theorem~\ref{t.A_p_char} is 
the following strong self-improvement property of $A_p$-distance weights
for weakly porous sets:
if $\alpha\ge 0$ and $E$ is weakly porous, then 
$w_{\alpha, E}\in A_p$ for some $1<p<\infty$
(i.e.\ $w_{\alpha,E}\in A_\infty$) if and only if $w_{\alpha,E}\in A_1$. 
The example in Section~\ref{s.apnwp} shows that this is not true for general sets. 
  
The outline for the rest of the paper is as follows. 
In Section~\ref{sect.prelim} we introduce notation and recall some
definitions and properties of dyadic decompositions and Muckenhoupt weights.
Weakly porous sets are defined in Section~\ref{sect.wp}, where we also
examine some of their basic properties.
Theorem~\ref{thm.main_intro} is proved in Sections~\ref{s.nec} and~\ref{s.suf}.
Section~\ref{s.upper_assouad} contains the definition of the Muckenhoupt exponent and
the proofs of Theorems~\ref{t.A_p_char} and~\ref{t.A_1_char},
together with some related results. 
In Section~\ref{s.circular}, we give an example of
a weakly porous set $E\subset\R^n$ which is not porous and compute explicitly the
Muckenhoupt exponent of $E$. Finally, 
in Section~\ref{s.apnwp} we construct the set $E\subset\R$ which  
is not weakly porous, but still $w_{\alpha,E}\in A_p$ for all $0<\alpha<1$ and $1<p<\infty$.

\section{Preliminaries}\label{sect.prelim}

Throughout this paper, we consider $\R^n$ equipped with the Euclidean distance and the $n$-dimensional Lebesgue
(outer) measure. The diameter of a set $E\subset\R^n$ is denoted by $\diam(E)$ 
and $\lvert E\rvert$ is the Lebesgue (outer) measure of $E$. 
If $x\in\R^n$, then $d_E(x)=\dist(x,E)$
denotes the distance from $x$ to the set $E$,
and $\dist(E,F)$ is the distance between the sets $E$ and $F$, that is,
$$
\dist(E,F)=\inf\lbrace \lvert x-y\rvert : x\in E, \, y\in F \rbrace.
$$
The open ball with center $x\in\R^n$ and radius $r>0$ is
\[
B(x,r)=\{ y\in\R^n : \lvert x-y\rvert<r \}.
\]

In this paper, 
we only consider cubes which are half-open and have sides parallel to the
coordinate axes. That is, 
a cube in $\R^n$ is a set of the form
\[
Q= [a_{1},b_{1})\times\dotsb\times[a_{n},b_{n}), 
\]
with side-length $\ell(Q)=b_{1}-a_{1}=\dotsb=b_{n}-a_{n}$.
For $x\in \R^n$ and $r>0$, 
the cube with center $x$ and side length $2r$ is 
\begin{equation}\label{e.cube}
Q(x,r)=\bigl\{y\in \R^n:  -r\le  y_{j}-x_j<r \text{ for all } j=1,\ldots,n\bigr\}.
\end{equation}
Clearly,
\[
\lvert Q(x,r)\rvert=(2r)^n\quad\text{and}\quad \diam(Q(x,r))=(2\sqrt{n})r.
\]

The dyadic decomposition of a cube $Q_0\subset\R^n$ 
is
\[
\mathcal{D}(Q_0)=\bigcup_{j=0}^\infty\mathcal{D}_j(Q_0),
\]
where each
$\mathcal{D}_j(Q_0)$ consists of the $2^{jn}$ pairwise disjoint (half-open) cubes $Q$, with side length $\ell(Q)=2^{-j}\ell(Q_0)$,
such that 
\[
Q_0=\bigcup_{Q\in\mathcal{D}_j(Q_0)}Q
\] 
for every $j=0,1,2,\ldots$.
The cubes in $\mathcal{D}(Q_0)$
are called dyadic cubes (with respect
to $Q_0$) and they satisfy following properties: 
\begin{itemize}
\item[(D1)]
Let $j\ge 1$ and $Q\in\mathcal{D}_j(Q_0)$.  Then there exists a unique  dyadic  cube $\pi Q\in\mathcal{D}_{j-1}(Q_0)$
satisfying $Q\subset\pi Q$. The cube $\pi Q$ is called the dyadic parent\index{dyadic!parent} of $Q$, and $Q$ is  called 
a dyadic child of $\pi Q$.
\item[(D2)] Every dyadic cube $Q\in\mathcal{D}(Q_0)$
has $2^n$ dyadic children.
\item[(D3)]
Nestedness property:
$P\cap Q\in \{P,Q,\emptyset\}$
for every $P,Q\in\mathcal{D}(Q_0)$.
\end{itemize}

A locally integrable function $w$ in $\R^n$, with $w(x)>0$ for almost every $x\in \R^n$, 
is called a weight in $\R^n$.

\begin{definition}\label{d.A1}
A weight $w$ in $\R^n$ belongs to the Muckenhoupt class $A_1$ if
there exists a constant $C$ such that
\begin{equation}\label{e.a_1}
 \vint_Q w(x)\,dx  \le C\, \essinf_{x\in Q} w(x),
\end{equation}
for every cube $Q\subset\R^n$.
The smallest possible constant $C$ in~\eqref{e.a_1} is called the $A_1$ constant of  $w$, and it is 
denoted by $[w]_{A_1}$.
\end{definition}

Above, we have used the notation
\[
 \vint_A w(x)\,dx = \frac 1 {\lvert A \rvert} \int_A w(x)\,dx
\]
for the mean value integral over a measurable set $A\subset\R^n$ with $0<\lvert A \rvert<\infty$.

 For $1<p<\infty$,  the class $A_p$ is defined as follows.

\begin{definition}\label{d.A_p}
A weight $w$ in $\R^n$ belongs to the Muckenhoupt class 
$A_p$,\index{Muckenhoupt class}\index{Muckenhoupt class!$A_p$ weight}\index{A p@$A_p$}  
for $1<p<\infty$, if
there exists a constant $C$ such that
\begin{equation}\label{e.A_pd}
\vint_Q w(x)\,dx \biggl(\vint_Q w(x)^{\frac{1}{1-p}}\,dx\biggr)^{p-1} \le C
\end{equation}
for every cube $Q\subset\R^n$.
The smallest possible constant $C$ in \eqref{e.A_pd} is called the $A_p$ constant of $w$, and it is 
denoted by $[w]_{A_p}$. 
\end{definition}

We recall that the inclusions $A_1 \subset A_p \subset A_q$ hold for $1 \leq p \leq q.$ 
Also, it is immediate that $w\in A_p$, for $1<p<\infty$, if and only if $w^{1-p'}\in A_{p'},$ 
and then $[w^{1-p'}]_{A_{p'}} = [w]_{A_p}^{1/(p-1)}.$ 
Here $p'=\frac{p}{p-1}$ is the conjugate exponent of $1<p<\infty.$ 
See~\cite[Chapter~IV]{GGRF1985} for an introduction to the theory of Muckenhoupt weights.

The following elementary property will be useful in Section~\ref{s.upper_assouad}. 

\begin{lemma}\label{lem.exponentA1Ap}
Let $w\in A_p$ for some $1<p<\infty.$ If $w^\beta \in A_1$ for some $\beta>0,$ then $w\in A_1.$ 
\end{lemma}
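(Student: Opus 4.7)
The plan is to use the geometric mean $\exp(\vint_Q \log w)$ as a bridge between the two hypotheses. The $A_p$ condition forces $\vint_Q w$ to be comparable to this geometric mean, while the $A_1$ bound for $w^\beta$ forces the geometric mean to be comparable to $\essinf_Q w$; chaining the two comparisons yields the $A_1$ inequality for $w$ itself.

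First I would extract from $w\in A_p$ the bound
\[
\vint_Q w(x)\,dx \le [w]_{A_p}\,\exp\!\left(\vint_Q \log w(x)\,dx\right)
\]
for every cube $Q\subset\R^n$. This is the classical geometric-mean characterization of $A_\infty$; it follows directly from Definition~\ref{d.A_p} by applying Jensen's inequality with the convex map $t\mapsto e^{-t/(p-1)}$ to $\log w$ to get $\bigl(\vint_Q w^{-1/(p-1)}\bigr)^{p-1} \ge \exp\bigl(-\vint_Q \log w\bigr)$ and then multiplying by $\vint_Q w$.

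Next, from $w^\beta \in A_1$ one immediately has $\vint_Q w^\beta \le [w^\beta]_{A_1}(\essinf_Q w)^\beta$, since $t\mapsto t^\beta$ is monotone. Applying Jensen in the opposite direction (concavity of $\log$) yields $\exp(\beta\vint_Q \log w) = \exp(\vint_Q \log w^\beta) \le \vint_Q w^\beta$. Combining these and taking a $\beta$-th root produces
\[
\exp\!\left(\vint_Q \log w(x)\,dx\right) \le [w^\beta]_{A_1}^{1/\beta}\,\essinf_{x\in Q} w(x).
\]
Chaining with the previous display gives $\vint_Q w \le [w]_{A_p}\,[w^\beta]_{A_1}^{1/\beta}\,\essinf_Q w$ for every cube $Q$, which is precisely the $A_1$ condition, with a quantitative bound $[w]_{A_1} \le [w]_{A_p}\,[w^\beta]_{A_1}^{1/\beta}$.

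There is no genuinely difficult step; the main insight is that the geometric mean is tightly sandwiched between the arithmetic mean and the essential infimum as soon as one has $A_\infty$ information from above and an $A_1$-type bound on some power from below. The argument handles all $\beta>0$ uniformly, so there is no need to split into cases $\beta\ge 1$ (where Jensen alone would give $w\in A_1$ without invoking the $A_p$ hypothesis at all) and $0<\beta<1$ (where the $A_p$ assumption is essential because one cannot simply raise an $A_1$ weight to a power larger than $1$).
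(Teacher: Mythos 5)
Your argument is correct: the reverse-Jensen step $\vint_Q w \le [w]_{A_p}\exp\bigl(\vint_Q \log w\bigr)$ follows from the $A_p$ condition exactly as you say, the bound $\exp\bigl(\vint_Q \log w\bigr)\le [w^\beta]_{A_1}^{1/\beta}\essinf_Q w$ follows from Jensen and the $A_1$ property of $w^\beta$ (using that $\essinf_Q w^\beta=(\essinf_Q w)^\beta$), and chaining gives the $A_1$ condition with $[w]_{A_1}\le [w]_{A_p}[w^\beta]_{A_1}^{1/\beta}$. The only point you pass over silently is that $\vint_Q\log w$ is well defined and finite, which is needed to run Jensen in both directions; this is standard for $A_p$ weights, since $\log^+ w\le w$ and $\log^- w\le (p-1)w^{-1/(p-1)}$ are locally integrable, but a one-line remark to that effect would make the proof airtight.

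Your route differs from the paper's. The paper does not use the geometric mean: it enlarges the exponent, choosing $q\ge p$ so large that $s=\tfrac{1}{q-1}\le\beta$, notes $w\in A_q$ by nesting and $w^s\in A_1$ by Jensen applied to $w^\beta\in A_1$, and then chains the $A_q$ condition with the power-mean inequality $\bigl(\vint_Q w^{-s}\bigr)^{-1/s}\le\bigl(\vint_Q w^{s}\bigr)^{1/s}$ and the $A_1$ bound for $w^s$. Your geometric mean is in effect the $q\to\infty$ limit of the paper's intermediate quantity, sitting below every power mean $\bigl(\vint_Q w^{s}\bigr)^{1/s}$, so you never need to match an exponent $s$ to $\beta$; this treats all $\beta>0$ uniformly and yields a constant expressed directly in terms of $[w]_{A_p}$ and $[w^\beta]_{A_1}$, rather than in terms of $[w]_{A_q}$ and $[w^s]_{A_1}$ for auxiliary $q,s$ depending on $\beta$. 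What the paper's version buys in exchange is that it stays entirely within finite power means of $w$, so the integrability of $\log w$ never has to be addressed.
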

\begin{proof}
Let $q\geq p$ be large enough so that $s= \frac{1}{q-1} \leq  \beta$ and $w\in A_q.$ Then we have $w^s\in A_1$ as well, thanks to Jensen's inequality. The $A_q$ condition on a cube $Q \subset \R^n$ for $w$ yields
\begin{align*}
\vint_Q w & \leq [w]_{A_q} \left( \vint_Q w^{\frac{1}{1-q}} \right)^{1-q} = [w]_{A_q} \left( \vint_Q w^{-s} \right)^{-1/s} \leq [w]_{A_q} \left( \vint_Q w^{s} \right)^{1/s} \\
& \leq [w]_{A_q} \left( [w^s]_{A_1} \essinf_Q w^{s} \right)^{1/s} = [w]_{A_q}  [w^s]_{A_1}^{1/s} \essinf_Q w,
\end{align*}
and thus $w\in A_1$.
\end{proof}

\section{Weakly porous sets}\label{sect.wp}

Recall that a set $E\subset\R^n$ is \emph{porous} if there exists a constant $c>0$ such that
for every $x\in\R^n$ and $r>0$ there exists $y\in\R^n$ satisfying $B(y,cr)\subset B(x,r)\setminus E$.
Equivalently, $E$ is porous if and only if
there is a constant $c>0$ such that for all
cubes $Q_0\subset\R^n$ there is a 
dyadic subcube $Q\in\mathcal{D}(Q_0)$ such that $Q\cap E=\emptyset$ and 
$\lvert Q\rvert\ge c\lvert Q_0\rvert$.

In~\cite{Vasin2003} Vasin defined \emph{weak porosity} 
in the unit circle $\mathbb{T}\subset\R^2$ as follows:
a set $E\subset \mathbb{T}$ is weakly porous, if
there are constants $c,\delta>0$ such that if $I\subset\mathbb{T}$
is an arbitrary arc, then
\[
\sum\lvert J_k\rvert\ge c\lvert I \rvert,
\]
where the sum is taken over all 
(pairwise disjoint) subarcs $J_k\subset I$ that contain no points of $E$
and satisfy $\lvert J_k\rvert\ge \delta \lvert J\rvert$, where $J\subset I$
is a lengthwise largest subarc without points of $E$. The subarcs that do not intersect $E$
are called \emph{free arcs}. 

We consider an extension of the above definition to $\R^n$.

\begin{definition}\label{definitionweakporosity}
Let $E\subset \R^n$ be a nonempty set. 
\begin{itemize}
\item[(i)] When $P\subset \R^n$ is a cube,
a dyadic subcube $Q\subset \mathcal{D}(P)$ is called \emph{$E$-free} if
$E\cap Q=\emptyset$.  We denote 
by $\mathcal{M}(P)\in\mathcal{D}(P)$ a
largest $E$-free dyadic subcube of $P$, that is,
$\ell(\mathcal{M}(P))\ge \ell(R)$
if $R\in\mathcal{D}(P)$ is an $E$-free
dyadic subcube of $P$. 
 Such a cube
need not be unique, but we fix one of them.
\item[(ii)]
The set $E\subset \R^n$ is \emph{weakly porous}, if there
are constants $0<c,\delta<1$ such that for all cubes $P\subset \R^n$
there exist $N\in\N$ and pairwise disjoint $E$-free cubes $Q_k\in\mathcal{D}(P)$, $k=1,\ldots,N$, such that 
$\lvert Q_k\rvert\ge \delta\lvert \mathcal{M}(P)\rvert$ for all $k=1,\ldots,N$
and 
\begin{equation}\label{e.def_weak_ineq}
\sum_{k=1}^N\lvert Q_k\rvert\ge c\lvert P \rvert.
\end{equation}
\end{itemize}
\end{definition}

Instead of dyadic cubes, also general subcubes of $P$ could be
used in the definition of weak porosity. However, the dyadic formulation
is  convenient from the point of view of our proofs.
Notice also that inequality \eqref{e.def_weak_ineq} 
can  be written as 
\[
\left\lvert \bigcup_{k=1}^N Q_k\right\rvert\ge c\lvert P \rvert,
\]
since the cubes $Q_1,\ldots,Q_N$ are
 pairwise disjoint. Hence, the weak porosity of a
 set $E$ can roughly be
described as follows:
 for every cube $P$, the union of those disjoint $E$-free subcubes that are not too small (compared to the largest $E$-free cube in $P$) has  measure comparable to that of $P$.

The following properties are 
 easy to verify using the definition of weak porosity: \begin{itemize}
\item If $E\subset\R^n$ is porous, then $E$ is weakly porous.
\item $E\subset\R^n$ is weakly porous if and only if the closure $\iol E$ is weakly porous.
\item If $E\subset\R^n$ is weakly porous, then $\lvert E \rvert = 0$. This is a consequence
of the Lebesgue differentiation theorem.
\item Weak porosity implicitly implies that for every cube $P\subset\R^n$ there exists an $E$-free dyadic subcube $Q\in\mathcal{D}(P)$. 
\end{itemize}

Let $E\subset \R^n$ be a nonempty set.
Given a cube $P \subset \R^n$ and $\delta>0$, we write 
\[
\widehat{\mathcal{F}_\delta}(P)= \{ Q\in \mathcal{D}(P) \,:\, |Q| \geq \delta |\mathcal{M}(P)| \text{ and }Q \cap E = \emptyset \}. 
\]
We denote by $\mathcal{F}_\delta(P)$ the maximal subfamily of the cubes in $\widehat{\mathcal{F}_\delta}(P).$ 
That is, 
each $R\in\widehat{\mathcal{F}_\delta}(P)$ is
contained in some cube $Q\in \mathcal{F}_\delta(P)$ and
if $Q\in\mathcal{F}_\delta(P)$, then  
$Q$ is not strictly contained in another cube in $\widehat{\mathcal{F}_\delta}(P)$.
Observe that the cubes in $\mathcal{F}_\delta(P)$ are pairwise disjoint, since two dyadic cubes are either
disjoint, or one of them is strictly contained in the other one.
The weak porosity of $E$ can now be formulated in terms of the sets $\mathcal{F}_\delta$, since $E$ is weakly porous if and only if
there are constants $0<c,\delta<1$ such that
\begin{equation}\label{eq.firstconditionweakporosity}
\sum_{Q\in\mathcal{F}_\delta(P)} \lvert Q\rvert \ge c\lvert P\rvert \quad \text{ for all cubes } P\subset\R^n.
\end{equation}
Indeed, it is clear that
\eqref{eq.firstconditionweakporosity} implies weak porosity of $E$. Conversely,
\begin{align*}
c\lvert P\rvert \le \sum_{k=1}^N |Q_k|
\le \sum_{Q\in\mathcal{\mathcal{F}_\delta}(P)} \sum_{k=1}^N
\mathbf{1}_{Q_k\subset Q}\lvert Q_k\rvert
\le \sum_{Q\in\mathcal{F}_\delta(P)} \lvert Q\rvert,
\end{align*}
whenever $c$, $\delta$, $P$ and $Q_k$, $k=1,\ldots,N$, are as in 
Definition \ref{definitionweakporosity}\,(ii).

Part~\ref{it2.parent} of the
next lemma will be important when proving that weak porosity implies the $A_1$-property for
$\dist(\cdot,E)^{-\alpha}$, for some $\alpha>0$; see the proof of Lemma~\ref{mainlemmasufficiency}. 

\begin{lemma}\label{lemma.parent}
Assume that $E\subset \R^n$ is  weakly porous set, with constants 
$0<c,\delta<1$. Then the following
statements hold.
\begin{enumerate}[label=\textup{(\roman*)}]
\item\label{it1.parent}
Assume that $Q\subset R$ are two 
cubes such that $E\cap Q\not=\emptyset$ and
$\lvert \mathcal{M}(Q)\rvert < 4^{-n}\delta \lvert \mathcal{M}(R)\rvert$.
Then 
\[\lvert Q\rvert \le (1-2^{-n}c)\lvert R\rvert.\]
\item\label{it2.parent} 
Assume
that $Q\subset R$ are two cubes such
that  
$\lvert R\rvert=2^n\lvert Q\rvert$. Then there
exists a number $k=k(n,c)\in \N$ such that
\[\lvert \mathcal{M}(R)\rvert\le 
4^{nk}\delta^{-k}\lvert \mathcal{M}(Q)\rvert.\]
\item\label{it3.parent} Assume that $Q\subset R$ are two cubes. Then there exist
constants $C=C(n,c,\delta)$ and $\sigma=\sigma(n,c,\delta)>0$ such that
\[
\lvert  \mathcal{M}(R)\rvert\le 
C\left( \frac{\ell(R)}{\ell(Q)} \right)^\sigma
\lvert \mathcal{M}(Q)\rvert.
\]
\end{enumerate}
\end{lemma}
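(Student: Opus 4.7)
The plan is to establish (i) directly from weak porosity, then derive (ii) by iterating the contrapositive of (i) along a nested chain of cubes, and finally obtain (iii) by chaining applications of (ii).

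For part (i), I would apply weak porosity to $R$ via the formulation \eqref{eq.firstconditionweakporosity}, obtaining pairwise disjoint $E$-free dyadic subcubes $\{F_i\}=\mathcal{F}_\delta(R)$ with $|F_i|\geq \delta|\mathcal{M}(R)|$ and $\sum_i |F_i|\geq c|R|$. The key structural fact is that no $F_i$ is contained in $Q$: if $F_i\subset Q$, then there is a dyadic level $j$ of $\mathcal{D}(Q)$ for which $\ell(Q)2^{-j}\in (\ell(F_i)/4,\ell(F_i)/2]$, and at least one cube from $\mathcal{D}_j(Q)$ fits inside $F_i$; since this cube is $E$-free, it gives $|\mathcal{M}(Q)|>(\ell(F_i)/4)^n\geq 4^{-n}\delta|\mathcal{M}(R)|$, contradicting the hypothesis. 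The conclusion $|Q|\leq (1-2^{-n}c)|R|$ then follows by estimating $|R\setminus Q|\geq 2^{-n}c|R|$ through a careful accounting of how the $F_i$ distribute between $Q$ and its complement. Extracting the sharp factor $2^{-n}$ in this final measure estimate is the main technical obstacle; I expect this will require splitting into cases according to how the $F_i$ meet $\partial Q$, together with using the dyadic parent of $F_i$ in $\mathcal{D}(R)$ (which, by the maximality defining $\mathcal{F}_\delta(R)$, cannot be $E$-free).

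For part (ii), the case $E\cap Q=\emptyset$ is immediate, since then $\mathcal{M}(Q)=Q$ and $|\mathcal{M}(R)|\leq |R|=2^n|\mathcal{M}(Q)|$. Assuming $E\cap Q\neq\emptyset$, I would construct a nested chain of cubes $R=R^{(0)}\supset R^{(1)}\supset\cdots\supset R^{(m)}=Q$ with each $R^{(j)}\supset Q$ and $|R^{(j+1)}|/|R^{(j)}|=\rho$ for a fixed $\rho$ slightly larger than $1-2^{-n}c$, chosen so that $\rho^m=2^{-n}$; such intermediate cubes can always be positioned inside $R$ to contain $Q$ by a routine translation argument. Since $E\cap R^{(j)}\supseteq E\cap Q\neq\emptyset$, the contrapositive of part (i) applied to each pair $R^{(j+1)}\subset R^{(j)}$ gives $|\mathcal{M}(R^{(j+1)})|\geq 4^{-n}\delta|\mathcal{M}(R^{(j)})|$. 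Iterating, $|\mathcal{M}(Q)|\geq (4^{-n}\delta)^m|\mathcal{M}(R)|$; since $|\log\rho|\gtrsim 2^{-n}c$, the value $m\lesssim n 2^n/c$ suffices, so $k:=m$ depends only on $n$ and $c$.

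For part (iii), I would apply part (ii) iteratively along a chain of cubes of doubling side length. Set $m=\lceil \log_2(\ell(R)/\ell(Q))\rceil$ and construct $Q=Q^{(0)}\subset Q^{(1)}\subset\cdots\subset Q^{(m)}=R$ with $\ell(Q^{(j+1)})\leq 2\ell(Q^{(j)})$ and each $Q^{(j+1)}$ containing $Q^{(j)}$; such cubes can again be placed inside $R$ by translation. Applying part (ii) (or its obvious variant for ratios in $(1,2^n]$ at the last step) to each consecutive pair and iterating yields
\[
|\mathcal{M}(R)|\leq (4^{nk}\delta^{-k})^m|\mathcal{M}(Q)|=C\Bigl(\frac{\ell(R)}{\ell(Q)}\Bigr)^\sigma|\mathcal{M}(Q)|,
\]
with $\sigma=k\log_2(4^n/\delta)$ and $C=C(n,c,\delta)$, which is the desired bound.
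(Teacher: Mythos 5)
Your parts (ii) and (iii) follow essentially the same route as the paper: for (ii) a nested chain from $Q$ to $R$ with constant volume ratio just above $1-2^{-n}c$ and iteration of the contrapositive of (i) (the paper uses ratio $2^{n/k}$ with $2^{n/k}<\frac{1}{1-2^{-n}c}$), and for (iii) a chain of roughly doubling cubes with (ii) applied at each step (the paper uses the concentric dilations $2^mQ$). These are fine granted (i), up to the small unstated point that comparing $\lvert\mathcal{M}(\cdot)\rvert$ for nested cubes whose dyadic grids are incompatible (your ``obvious variant'' for ratios in $(1,2^n]$) costs a harmless factor of order $4^n$.

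The genuine gap is in part (i), which is the crux of the lemma. The structural fact you actually prove --- that no $F_i\in\mathcal{F}_\delta(R)$ is contained in $Q$ --- is correct but strictly too weak, and no ``careful accounting of how the $F_i$ distribute between $Q$ and its complement'' can recover the conclusion from it. The cubes $F_i$ are dyadic with respect to $R$, while $Q$ is an arbitrary subcube, so an $F_i$ that is not contained in $Q$ may protrude from $Q$ by an arbitrarily thin slab; then $\lvert F_i\setminus Q\rvert$ is an arbitrarily small fraction of $\lvert F_i\rvert$, and summing over the disjoint $F_i$ gives no bound of the form $\lvert R\setminus Q\rvert\ge 2^{-n}c\lvert R\rvert$. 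Your proposed repairs (cases according to how $F_i$ meets $\partial Q$; the fact that the dyadic parent of $F_i$ in $\mathcal{D}(R)$ meets $E$) constrain $E$, not the position of $F_i$ relative to $Q$, and do not produce the missing proportion. What is needed, and what the paper proves, is the stronger claim that the \emph{center} $x_{F_i}$ of each $F_i$ lies in $R\setminus Q$: if $x_{F_i}\in Q$, then since $Q$ meets $E$ while $F_i$ does not, $Q\not\subset F_i$, which together with $x_{F_i}\in Q$ forces $\ell(Q)>\ell(F_i)/2$; choosing the dyadic level of $\mathcal{D}(Q)$ whose cells have side in $(\ell(F_i)/4,\ell(F_i)/2]$, the cell containing $x_{F_i}$ lies inside $F_i$, hence is $E$-free, giving $\lvert\mathcal{M}(Q)\rvert\ge 4^{-n}\delta\lvert\mathcal{M}(R)\rvert$, a contradiction. (This is where the hypothesis $E\cap Q\neq\emptyset$ enters; your containment argument never uses it.) Once the center is outside $Q$, a dyadic child of $F_i$ is disjoint from $Q$, so $\lvert F_i\setminus Q\rvert\ge 2^{-n}\lvert F_i\rvert$, and summing over the disjoint family and invoking \eqref{eq.firstconditionweakporosity} yields $\lvert R\setminus Q\rvert\ge 2^{-n}c\lvert R\rvert$, i.e.\ $\lvert Q\rvert\le(1-2^{-n}c)\lvert R\rvert$. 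Note that your own fitting argument, applied at the center rather than to containment of the whole cube, proves exactly this stronger claim, so the fix is local but essential.
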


\begin{proof}
We first remark that the dyadic grids $\mathcal{D}(Q)$
and $\mathcal{D}(R)$ need not be compatible, and
this is taken into account in the arguments below.

First we show~\ref{it1.parent}.
Fix $S\in\mathcal{F}_\delta(R)$. We claim that the
 center  $x_S\in R$ of $S$ belongs to $R\setminus Q$.
Assume the contrary, namely, that $
x_S\in Q$. 
Since $S$ is $E$-free and $Q$ intersects $E$, there
exists an $E$-free dyadic cube $T\in \mathcal{D}(Q)$ such
that $\ell(T)\ge \ell(S)/4$. It follows that
\[
\lvert \mathcal{M}(Q)\rvert \ge \lvert T\rvert
\ge 4^{-n}\lvert S\rvert \ge  4^{-n}\delta \lvert \mathcal{M}(R)\rvert.
\]
This is a contradiction, since 
$\lvert \mathcal{M}(Q)\rvert < 4^{-n}\delta \lvert \mathcal{M}(R)\rvert$ by assumption.
We have shown that $x_S\in R\setminus Q$, and
therefore there
exists a cube $S'\subset S\setminus Q$ such that
$\lvert S'\rvert = 2^{-n}\lvert S\rvert$.
Since $\{S'\,:\,S\in\mathcal{F}_\delta(R)\}$ is
a pairwise disjoint family of cubes contained in
$R\setminus Q$, we 
obtain that
\begin{align*}
\lvert R\rvert - \lvert Q\rvert =
\lvert R\setminus Q\rvert \ge \sum_{S\in\mathcal{F}_\delta(R)} \lvert S'\rvert  = 2^{-n} \sum_{S\in\mathcal{F}_\delta(R)} \lvert S\rvert.
\end{align*}
By weak porosity,
the last term above is bounded below by $2^{-n}c\lvert R\rvert$, and
reorganizing the terms gives
$(1-2^{-n}c)\lvert R\rvert \ge \lvert Q\rvert$ as claimed
in~\ref{it1.parent}.

Next we show~\ref{it2.parent}.
If $E\cap Q=\emptyset$, then
\[
\lvert \mathcal{M}(R)\rvert\le \lvert
R\rvert =2^{n}\lvert Q\rvert\le
4^{n} \delta^{-1}\lvert Q\rvert=4^n\delta^{-1}\lvert \mathcal{M}(Q)\rvert.
\]
In this case, we may take $k=1$.
In the sequel we 
assume that  $E\cap Q\not=\emptyset$.
Choose $k=k(n,c)$ such that $2^{n/k}<\frac{1}{1-2^{-n}c}$.
Then there exists a finite sequence
\[
Q=R_0\subset R_1\subset R_2\subset \dotsb \subset R_k=R
\]
of cubes such that  $\lvert R_i\rvert \cdot \lvert R_{i-1}\rvert^{-1}  = 2^{n/k}$. 
Observe that
\[
2^n=(2^{n/k})^k = \prod_{i=1}^k \frac{\lvert R_i\rvert}{\lvert R_{i-1}\rvert}=\frac{\lvert R_k\rvert }{\lvert R_0\rvert}=\frac{\lvert R\rvert}{\lvert Q\rvert}.
\]
Fix $1\le i\le k$.
We have  $\emptyset\not=E\cap Q\subset E\cap R_{i-1}$ and $R_{i-1}\subset R_i$. Moreover, 
\[
(1-2^{-n}c)\lvert R_i\rvert  = (1-2^{-n}c) 2^{n/k}\lvert R_{i-1}\rvert < \lvert R_{i-1}\rvert
\]
and therefore the contrapositive of part~\ref{it1.parent}  implies that
\[
\lvert \mathcal{M}(R_{i-1})\rvert \ge 4^{-n}\delta \lvert \mathcal{M}(R_i)\rvert
\]
for all $i=1,2,\ldots,k$. This allows us to conclude
that
\[
\lvert \mathcal{M}(R_0)\rvert
\ge 4^{-n}\delta \lvert \mathcal{M}(R_1)\rvert \ge (4^{-n}\delta)^2 \lvert \mathcal{M}(R_2)\rvert\ge \dotsb \ge (4^{-n}\delta)^k\lvert \mathcal{M}(R_k)\rvert.
\]
The desired conclusion follows, since $R_0=Q$ and
$R_k=R$.

Finally, we prove~\ref{it3.parent}.  An easy  computation shows that $R \subset \lambda Q,$ for $\lambda= 3 \ell(R)/\ell(Q).$
Here $\lambda Q$ denotes the cube with the same center as $Q$ and side-length equal to $\lambda \ell(Q).$ 
Then, for 
\[
m=  1+ \left \lfloor \log_2 \left( \frac{3\ell(R)}{\ell(Q)} \right) \right \rfloor,
\] 
we have that $R \subset 2^m Q.$ 
Hence $\lvert \mathcal{M}(R)\rvert \le C(n)\lvert \mathcal{M}(2^mQ) \rvert$.
Denote by $C_1=4^{nk}\delta^{-k}$ the constant in~\ref{it2.parent}. Then, by iterating ~\ref{it2.parent} we obtain
\begin{align*}
\lvert \mathcal{M}(2^mQ) \rvert&\le C_1^m \lvert \mathcal{M}(Q) \rvert \leq  C_1^{1+\log_2 \left( \frac{3\ell(R)}{\ell(Q)} \right)}\lvert \mathcal{M}(Q) \rvert 
\\& = C(n,c,\delta)\left( \frac{\ell(R)}{\ell(Q)} \right)^\sigma
\lvert \mathcal{M}(Q)\rvert,
\end{align*}
where $\sigma=\sigma(n,c,\delta)$. 
The claim~\ref{it3.parent} follows by combining the above estimates.
\end{proof}

\begin{example}
Unlike for porous sets, inclusions do not preserve weak porosity: there are sets $F \subset E$ such that $E$ is weakly porous but $F$ is not. 
For instance, $\Z$ is clearly a weakly porous subset of $\R$, but $\N\subset\Z$ is not a weakly porous subset of $\R$.
Indeed, assume for the contrary that $\N$ is weakly porous in $\R$
with constants $0<c,\delta<1$.
Consider
cubes $Q_j=[0,2^j)$,  $j\in\N$.
Observe that $Q_j\subset R_j=[-2^j,2^j)$.
Lemma~\ref{lemma.parent}\,\ref{it2.parent} implies 
that there is a constant $C=C(c,\delta)>0$ such that
$2^j=\rvert \mathcal{M}(R_j)\lvert\le C\lvert \mathcal{M}(Q_j)\rvert=C$.
By choosing $j$ large enough, we get a contradiction.
\end{example}

\section{$A_1$ implies weak porosity}\label{s.nec}

 This section and the following Section~\ref{s.suf}
contain the proof of Theorem~\ref{thm.main_intro}. We begin 
by proving the necessity part of the equivalence
in the theorem, that is, if 
$\dist(\cdot,E)^{-\alpha}$ is an $A_1$ weight, then
$E$ is a weakly porous set. The straight-forward proof 
illustrates in a nice way the connection between the $A_1$ condition and 
the definition of weak porosity. 

\begin{lemma}\label{lemma.necessity}
Let $E\subset \R^n$ be a nonempty set,   let $\alpha>0$, and write
$w(x)=\dist(x,E)^{-\alpha}$ for all $x\in \R^n$. If
$w\in A_1$, then
$E$ is weakly porous with constants depending
on $n$, $\alpha$ and $[w]_{A_1}$.
\end{lemma}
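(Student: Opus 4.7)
The plan is to directly extract the weak porosity constants $c,\delta$ from the $A_1$ condition applied to an arbitrary cube $P$. First I dispose of the trivial case: if $E\cap P=\emptyset$, then $P$ itself is $E$-free and the definition is satisfied with $N=1$. So I assume $E\cap P\neq\emptyset$, abbreviate $M=\mathcal{M}(P)$, and observe that because $w\in A_1$ forces $|E|=0$ (otherwise $\vint_Q w =\infty$ for some $Q$), I may freely ignore $E$ in any integral.

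The first ingredient is an upper bound for $\essinf_P w$. Since $M$ is $E$-free, every point of the concentric subcube $M'\subset M$ with $\ell(M')=\ell(M)/2$ has $d_E(\cdot)\ge \ell(M)/4$. As $|M'|>0$, this gives
\[
\essinf_P w \le 4^\alpha \ell(M)^{-\alpha}.
\]
The second ingredient is a pointwise lower bound for $w$ on $P\setminus U_\delta$, where $U_\delta=\bigcup_{Q\in\mathcal{F}_\delta(P)} Q$ and $\delta\in(0,1)$ is to be chosen. For any $x\in P\setminus(E\cup U_\delta)$, the unique maximal $E$-free cube $Q_x\in\mathcal{D}(P)$ containing $x$ must satisfy $|Q_x|<\delta|M|$ — otherwise $x$ would lie in some element of $\mathcal{F}_\delta(P)$ by maximality. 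Its dyadic parent $\pi Q_x$ is forced to meet $E$ (since $E\cap P\ne\emptyset$ rules out $Q_x=P$, and enlarging a maximal $E$-free cube breaks the $E$-freeness), so
\[
d_E(x)\le \diam(\pi Q_x)=2\sqrt{n}\,\ell(Q_x)<2\sqrt{n}\,\delta^{1/n}\ell(M),
\]
and consequently $w(x)\ge (2\sqrt{n})^{-\alpha}\delta^{-\alpha/n}\ell(M)^{-\alpha}$.

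To close the argument I integrate over $P$ and apply the $A_1$ inequality of Definition~\ref{d.A1}:
\[
\frac{|P\setminus U_\delta|}{|P|}\,(2\sqrt{n})^{-\alpha}\delta^{-\alpha/n}\ell(M)^{-\alpha}
\le \vint_P w
\le [w]_{A_1}\essinf_P w
\le [w]_{A_1}\,4^\alpha\ell(M)^{-\alpha}.
\]
The factor $\ell(M)^{-\alpha}$ cancels and I obtain $|P\setminus U_\delta|\le C(n,\alpha,[w]_{A_1})\,\delta^{\alpha/n}|P|$. Fixing $\delta$ small enough that $C\delta^{\alpha/n}\le 1/2$ then gives $\sum_{Q\in\mathcal{F}_\delta(P)}|Q|=|U_\delta|\ge\tfrac12|P|$, which by the characterization \eqref{eq.firstconditionweakporosity} is exactly weak porosity of $E$ with constants $c=1/2$ and the chosen $\delta$, depending only on $n,\alpha,[w]_{A_1}$.

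The mildly delicate step — and the only one where one can slip — is the argument that the maximal $E$-free dyadic cube $Q_x$ through $x\notin E\cup U_\delta$ really does satisfy $|Q_x|<\delta|M|$ and has a parent within $\mathcal{D}(P)$ that meets $E$. Both facts hinge on the dyadic/maximal structure of $\mathcal{F}_\delta(P)$ and on the standing assumption $E\cap P\neq\emptyset$; everything else is a routine manipulation of the $A_1$ inequality.
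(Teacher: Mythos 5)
Your argument is essentially the paper's proof: the same pointwise lower bound for $w$ on $P\setminus\bigcup_{Q\in\mathcal{F}_\delta(P)}Q$ via the dyadic parent of the maximal $E$-free cube, the same upper bound for $\essinf_P w$ coming from the $E$-free cube $\mathcal{M}(P)$, and the same integration against the $A_1$ inequality followed by choosing $\delta$ small.

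The one genuine (though easily repaired) hole is the claim that every $x\in P\setminus(E\cup U_\delta)$ lies in a maximal $E$-free dyadic cube $Q_x\in\mathcal{D}(P)$. This is false when $E$ is not closed: a point $x\in\iol{E}\setminus E$ has $\dist(x,E)=0$, so every dyadic cube containing it meets $E$ (take $E=\{1/k:k\in\N\}$ and $x=0$). Knowing only $\lvert E\rvert=0$ does not let you discard such points, so as written the pointwise bound is not justified on all of $P\setminus(E\cup U_\delta)$. What saves the argument is that these exceptional points lie in $\iol{E}$, and $\lvert\iol{E}\rvert=0$ because $w=\dist(\cdot,E)^{-\alpha}=\dist(\cdot,\iol{E})^{-\alpha}$ is infinite on $\iol{E}$ and locally integrable; hence your bound holds almost everywhere on $P\setminus U_\delta$ and the integration step goes through. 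The paper avoids the issue by noting at the outset that $\dist(\cdot,E)=\dist(\cdot,\iol{E})$ and that $E$ is weakly porous if and only if $\iol{E}$ is, and then assuming $E$ closed; inserting either that reduction or the observation $\lvert\iol{E}\rvert=0$ makes your proof complete. The rest (including your cleaner treatment of the essential infimum via a concentric subcube of $\mathcal{M}(P)$, and the final choice of $\delta$ giving $c=1/2$) matches the paper.
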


\begin{proof}
Since $\dist(\cdot,E)=\dist(\cdot,\iol{E})$ and
$E$ is weakly porous if and only if $\iol{E}$ is weakly porous, quantitatively, we may assume that $E$ is closed. 
Assume that $w\in A_1$  and fix $0<\delta<1$ to be chosen later. 
 Let $P\subset \R^n$ be a cube and 
write $\ell=\ell(\mathcal{M}(P))$ for  the sidelength of $\mathcal{M}(P)$. 

Observe that the set $E$ is of measure zero, since $w$ is locally
integrable and $w(x)=\infty$ in $E$. Since $E$ is closed, for every $x\in P\setminus E$ we
have $\dist(x,E)>0$ and therefore there exists an $E$-free dyadic 
cube  $Q\in \mathcal{D}(P)$ such that $x\in Q$.  As a consequence, we can write
$P\setminus E$ as a disjoint union of maximal $E$-free dyadic cubes $Q\in \mathcal{D}(P)$.
Let $x\in P\setminus E$ such that $x\not \in \bigcup_{Q\in\mathcal{F}_\delta(P)} Q$. Then
the maximal $E$-free dyadic cube $Q\in \mathcal{D}(P)$ containing $x$ satisfies
\[
\lvert Q\rvert<\delta \lvert \mathcal{M}(P)\rvert=\delta \ell^n.
\]
Since $\pi Q \in  \mathcal{D}(P)$ is not $E$-free, we have
\[
\dist(x,E)\le \diam(\pi Q)< \delta^{1/n}2\sqrt n \ell.
\]
It follows
that
\[
\ell^{-\alpha} < C(n,\alpha)\delta^{\alpha/n}\dist(x,E)^{-\alpha}
\]
for every $x\in (P\setminus E)\setminus \bigcup_{Q\in\mathcal{F}_\delta(P)} Q$.
By integrating, and using the fact that $E$ is of measure zero, we obtain
\begin{align*}
\ell^{-\alpha}\frac{\lvert P\setminus \bigcup_{Q\in\mathcal{F}_\delta(P)} Q\rvert}{\lvert P\rvert}
&\le C(n,\alpha)\delta^{\alpha/n}\frac{1}{\lvert P\rvert}\int_{P\setminus \bigcup_{Q\in\mathcal{F}_\delta(P)} Q}
\dist(x,E)^{-\alpha}\,dx\\
&\le C(n,\alpha)\delta^{\alpha/n}\vint_{P}
\dist(x,E)^{-\alpha}\,dx\\
&\le C(n,\alpha)\delta^{\alpha/n}[w]_{A_1}\essinf_{x\in P} \dist(x,E)^{-\alpha}.
\end{align*}
Denote by $y$ the  center  of $\mathcal{M}(P)\subset P$. Then
\[
\essinf_{x\in P} \dist(x,E)^{-\alpha}
\le \dist(y,E)^{-\alpha}\le 2^\alpha \ell(\mathcal{M}(P))^{-\alpha}=2^\alpha\ell^{-\alpha}.
\]
Simplifying, we get
\[
\lvert P\rvert-\sum_{Q\in\mathcal{F}_\delta(P)} \lvert Q\rvert = 
\biggl\lvert P\setminus \bigcup_{Q\in\mathcal{F}_\delta(P)} Q\biggr\rvert\le
C(n,\alpha)\delta^{\alpha/n}[w]_{A_1}\lvert P\rvert.
\]
It remains to choose $\delta=\delta(n,\alpha,[w]_{A_1})>0$ so small that $C(n,\alpha)\delta^{\alpha/n}[w]_{A_1}<1$,
and condition~\eqref{eq.firstconditionweakporosity} follows.
\end{proof}

\section{Weak porosity implies $A_1$}\label{s.suf}

 Next, we turn to the sufficiency part of the equivalence
in Theorem~\ref{thm.main_intro}, that is, the weak porosity of $E$
implies that $\dist(\cdot,E)^{-\alpha}$ is an $A_1$ weight;
see Lemma~\ref{lemma.sufficiency}. The proof applies an iteration scheme,
which is built on an efficient use
of the dyadic definition of weak porosity;
see the proof of Lemma~\ref{mainlemmasufficiency}. The following sets $\mathcal{F}_\delta^k$ and
$\mathcal{G}_\delta^k$ will be important in the iteration.

Fix a weakly porous closed set $E\subset \R^n$ with constants $0<c,\delta<1$ and a cube $P_0\subset \R^n$.
Recall that $\mathcal{F}_\delta(P_0)$ is the maximal subfamily of the collection
\[
\widehat{\mathcal{F}_\delta}(P_0)= \bigl\{ Q\in \mathcal{D}(P_0) \,:\, |Q| \geq \delta |\mathcal{M}(P_0)| \text{ and }  Q \cap E = \emptyset \bigr\}.
\]
 We will need also  the complementary family $\mathcal{G}_\delta(P_0)$, which is 
defined to be the maximal subfamily of the collection
\[
\widehat{\mathcal{G}_\delta}(P_0)=\Biggl\{P\in \mathcal{D}(P_0) \,:\, 
P\subset P_0\setminus \bigcup_{Q\in\mathcal{F}_\delta(P_0)}Q\Biggr\}.
\]
Due to the lattice properties of dyadic cubes, we have $\lvert Q\rvert\ge \delta |\mathcal{M}(P_0)|$ for all $Q\in\mathcal{G}_\delta(P_0)$. Indeed, such a cube $Q\in\mathcal{G}_\delta(P_0)$ cannot be contained in any cube belonging to $\mathcal{F}_\delta(P_0),$ but, on the other hand, the dyadic parent $\pi Q\in \mathcal{D}(P_0)$ of $Q$ must intersect some $R\in \mathcal{F}_\delta(P_0).$ Consequently $R \subsetneq \pi Q $, and
$$
\lvert Q \rvert = 2^{-n}\lvert \pi Q \rvert \geq \lvert R \rvert \geq \delta |\mathcal{M}(P_0)|.
$$  
We let 
$\mathcal{G}_\delta^0=\{P_0\}$, $\mathcal{F}_\delta^1=\mathcal{F}_\delta(P_0)$, $\mathcal{G}_\delta^1=\mathcal{G}_\delta(P_0)$,
\[
\mathcal{F}_\delta^2=\bigcup_{R\in\mathcal{G}_\delta^1} \mathcal{F}_\delta(R),\,\qquad \mathcal{G}_\delta^2=\bigcup_{R\in\mathcal{G}_\delta^1}\mathcal{G}_\delta(R),
\]
and in general, for $k=3,4,\ldots$, we define
\[
\mathcal{F}_\delta^k=\bigcup_{R\in\mathcal{G}_\delta^{k-1}} \mathcal{F}_\delta(R),\,\qquad \mathcal{G}_\delta^k=\bigcup_{R\in\mathcal{G}_\delta^{k-1}}\mathcal{G}_\delta(R).
\]

\begin{lemma}\label{e.e-inclusion}
Assume that $E\subset \R^n$ is a weakly porous
closed set with constants $0<c,\delta<1$. 
Let $P_0\subset\R^n$ be a cube, and let sets 
$\mathcal{F}_\delta^k$, for $k=1,2,\ldots$, be as above.
Then
\[
P_0\setminus E = \bigcup_{k=1}^{\infty} 
\bigcup_{Q\in\mathcal{F}_\delta^k} Q.
\]
\end{lemma}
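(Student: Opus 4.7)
The plan is to verify both inclusions. The inclusion $\supset$ is immediate from the construction, since every cube in every $\mathcal{F}_\delta^k$ is by definition $E$-free and contained in $P_0$.

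For $\subset$, I will first establish, by induction on $k\ge 1$, the auxiliary partition identity
\[
P_0 = \Bigl( \bigcup_{j=1}^k \bigcup_{Q \in \mathcal{F}_\delta^j} Q \Bigr) \cup \Bigl( \bigcup_{R \in \mathcal{G}_\delta^k} R \Bigr),
\]
with the two large unions disjoint. For the base case $k=1$, given any $x \in P_0 \setminus \bigcup_{Q \in \mathcal{F}_\delta^1} Q$, I take a dyadic subcube $P' \in \mathcal{D}(P_0)$ containing $x$ with $|P'| < \delta |\mathcal{M}(P_0)|$. Nestedness of dyadic cubes forces $P'$ to be disjoint from every $Q \in \mathcal{F}_\delta^1$ (since $|P'|<|Q|$ and $x \notin Q$), so $P' \in \widehat{\mathcal{G}_\delta}(P_0)$ and hence $x$ lies in some cube of $\mathcal{G}_\delta^1$. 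The inductive step applies the same argument inside each $R \in \mathcal{G}_\delta^{k-1}$ in place of $P_0$.

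Now fix $x \in P_0 \setminus E$ and suppose, toward a contradiction, that $x$ lies in no cube of any $\mathcal{F}_\delta^k$. The partition identity then yields a nested sequence $P_0 = R_0 \supset R_1 \supset R_2 \supset \cdots$ with $R_k \in \mathcal{G}_\delta^k$ and $x \in R_k$ for every $k$. Weak porosity guarantees $\mathcal{F}_\delta(R_{k-1}) \neq \emptyset$, so $R_{k-1} \notin \mathcal{G}_\delta(R_{k-1})$ and the inclusion $R_k \subsetneq R_{k-1}$ is strict; iterating gives $|R_k| \le 2^{-nk}|P_0| \to 0$, hence $\diam(R_k) \to 0$.

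The contradiction will come from the fact that $E$ is closed and $x \notin E$: once $\diam(R_k) < \dist(x,E)$, the cube $R_k$ lies inside $B(x,\dist(x,E))$ and is therefore $E$-free. Combined with the size bound $|R_k| \ge \delta |\mathcal{M}(R_{k-1})|$ noted right after the definition of $\mathcal{G}_\delta$, this places $R_k$ in $\widehat{\mathcal{F}_\delta}(R_{k-1})$, so $R_k$ must be contained in some maximal element $Q \in \mathcal{F}_\delta(R_{k-1})$. But this contradicts $R_k \subset R_{k-1}\setminus \bigcup_{Q' \in \mathcal{F}_\delta(R_{k-1})} Q'$. The main obstacle is this final clash: one must combine the geometric contraction $\diam(R_k) \to 0$ with the lower size bound for $\mathcal{G}_\delta$-cubes to produce an $E$-free admissible cube that cannot coexist with the disjointness built into the definition of $\mathcal{G}_\delta$.
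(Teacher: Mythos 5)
Your proof is correct and follows essentially the same route as the paper: both iterate the $\mathcal{F}_\delta/\mathcal{G}_\delta$ decomposition to produce a strictly nested chain of cubes $R_k\in\mathcal{G}_\delta^k$ with $\lvert R_k\rvert\le 2^{-nk}\lvert P_0\rvert$ and derive a contradiction from this shrinking. The only variation is where closedness of $E$ enters: the paper fixes an $E$-free dyadic cube $Q\ni x$ at the outset and forces $\lvert Q\rvert<\delta\lvert \mathcal{M}(R_k)\rvert\le\delta\lvert R_k\rvert\to 0$, whereas you let $R_k$ itself become $E$-free once $\diam(R_k)<\dist(x,E)$ and then clash this with the lower bound $\lvert R_k\rvert\ge\delta\lvert\mathcal{M}(R_{k-1})\rvert$ and the definition of $\mathcal{G}_\delta(R_{k-1})$; both ways of closing the argument are valid.
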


\begin{proof}
Let $x\in P_0 \setminus E$. Because $E$ is closed, there exists a dyadic cube $Q \in \mathcal{D}(P_0)$ such that $x\in Q$ and $Q \cap E = \emptyset$. We claim that $Q \subset \bigcup_{k=1}^\infty \bigcup\mathcal{F}_\delta^k$. 
Suppose, for the sake of contradiction, that $Q$ is not a subset of this union. 
Because $Q\not\subset\bigcup\mathcal{F}_\delta^1$,  there exists $R_1 \in \mathcal{G}_\delta^1$ containing $Q$.
Now $Q\not\subset\bigcup\mathcal{F}_\delta(R_1)$, as $Q\not\subset\bigcup\mathcal{F}_\delta^2$.  
Thus there exists $R_2\in \mathcal{G}_\delta(R_1)$ containing $Q,$ and again, $Q\not\subset\bigcup\mathcal{F}_\delta(R_2)$.
Repeating this argument, for every $k$ we obtain cubes 
\[
R_1 \supset R_2 \supset \cdots \supset R_k \supset Q
\] 
with $R_j \in \mathcal{G}_\delta(R_{j-1})$ and such that $Q\not\subset\bigcup\mathcal{F}_\delta(R_k)$.  
Also, because each $R_j$ is strictly
contained in $R_{j-1},$ we must have $\lvert R_j \rvert \leq 2^{-n} \lvert R_{j-1} \rvert.$ Then $Q$ satisfies
\[
\lvert Q \rvert <   \delta \lvert \mathcal{M}(R_k) \rvert \leq \delta  \lvert R_k \rvert \leq \frac{\delta}{2^{n(k-1)}} \lvert R_1 \rvert \leq \frac{\delta}{2^{ nk}} \lvert P_0 \rvert.
\]
Letting $k \to \infty,$ we derive a contradiction.
\end{proof}

\begin{lemma}\label{mainlemmasufficiency}
Assume that $E\subset \R^n$ is  a  weakly porous
closed set with constants $0<c,\delta<1$.
Let $P_0\subset\R^n$ be a cube and let sets $\mathcal{F}_\delta^k$, for $k=1,2,\ldots$, be as above.
Then there are constants $0<\gamma=\gamma(c,\delta,n)<\frac{1}{n}$ and $C=C(c,\delta,n)>0$ such that
\[
\sum_{k=1}^\infty \sum_{Q\in\mathcal{F}_\delta^k}\lvert Q\rvert^{1-\gamma}
\le C\lvert P_0\rvert \lvert \mathcal{M}(P_0)\rvert^{-\gamma}.
\]
\end{lemma}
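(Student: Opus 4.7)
The plan is to reduce the weighted leaf sum to a sum over the $\mathcal{G}_\delta^k$ tree nodes, and then bound the latter by a convergent geometric series in the iteration depth $k$.

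\emph{First reduction.} For each $R\in\mathcal{G}_\delta^{k-1}$ the defining inclusion $Q\in\mathcal{F}_\delta(R)\subset\widehat{\mathcal{F}_\delta}(R)$ gives $|Q|\geq\delta|\mathcal{M}(R)|$. Combined with disjointness of $\mathcal{F}_\delta(R)$ inside $R$ (so $\sum_{Q\in\mathcal{F}_\delta(R)}|Q|\leq|R|$), this yields
\[
\sum_{Q\in\mathcal{F}_\delta(R)}|Q|^{1-\gamma}\leq\delta^{-\gamma}|\mathcal{M}(R)|^{-\gamma}\sum_{Q}|Q|\leq\delta^{-\gamma}|R||\mathcal{M}(R)|^{-\gamma}.
\]
Therefore it suffices to prove
\[
\sum_{k\geq 0}\sum_{R\in\mathcal{G}_\delta^k}|R||\mathcal{M}(R)|^{-\gamma}\leq C|P_0||\mathcal{M}(P_0)|^{-\gamma}.
\]

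\emph{Two decay mechanisms along the tree.} I would extract two independent geometric decays: (a) iterating the weak porosity bound $\sum_{R'\in\mathcal{G}_\delta(R)}|R'|\leq(1-c)|R|$ (which is equivalent to $\sum_{Q\in\mathcal{F}_\delta(R)}|Q|\geq c|R|$) gives $\sum_{R\in\mathcal{G}_\delta^k}|R|\leq(1-c)^k|P_0|$; (b) if $R'\in\mathcal{G}_\delta(R)$ contained an $E$-free dyadic subcube of size at least $\delta|\mathcal{M}(R)|$, that subcube would belong to $\widehat{\mathcal{F}_\delta}(R)$, hence be contained in some $Q\in\mathcal{F}_\delta(R)$; this forces $R'\cap Q\neq\emptyset$ and then (by nestedness) a contradiction with $R'\in\mathcal{G}_\delta(R)$. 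Thus $|\mathcal{M}(R')|<\delta|\mathcal{M}(R)|$, and iterating yields $|\mathcal{M}(R)|\leq\delta^k|\mathcal{M}(P_0)|$ for every $R\in\mathcal{G}_\delta^k$.

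\emph{Quantitative comparison and summation.} I would then apply Lemma~\ref{lemma.parent}\,\ref{it3.parent} to the pair $R\subset P_0$ to obtain $|\mathcal{M}(R)|\geq c_0(|R|/|P_0|)^{\sigma/n}|\mathcal{M}(P_0)|$, and use (a), (b), together with the lower bound $|R|\geq\delta|\mathcal{M}(R_p)|$ on $\mathcal{G}_\delta$-children, to derive a level-$k$ estimate of the form
\[
\sum_{R\in\mathcal{G}_\delta^k}|R||\mathcal{M}(R)|^{-\gamma}\leq C|P_0||\mathcal{M}(P_0)|^{-\gamma}\cdot\bigl((1-c)\delta^{-\sigma\gamma/n}\bigr)^{k}.
\]
This is a geometric series with ratio $(1-c)\delta^{-\sigma\gamma/n}$, which is strictly less than $1$ provided $\gamma$ is small enough. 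Concretely, I would take
\[
0<\gamma<\min\left\{\tfrac{1}{n},\ \tfrac{n\log\bigl(1/(1-c)\bigr)}{\sigma\log(1/\delta)}\right\},
\]
summing the series and combining with the first reduction to conclude.

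\emph{Main obstacle.} The principal difficulty is that one cannot produce a clean per-node recursion $\sum_{R'\in\mathcal{G}_\delta(R)}|R'||\mathcal{M}(R')|^{-\gamma}\leq\lambda|R||\mathcal{M}(R)|^{-\gamma}$ with $\lambda<1$ uniform in $R$, because the ratio $|R|/|\mathcal{M}(R)|$ is not \emph{a priori} uniformly bounded along the iteration tree (it can be as large as one likes for weakly porous sets). The decisive step is the global use of Lemma~\ref{lemma.parent}\,\ref{it3.parent} to relate each $|\mathcal{M}(R)|$ directly to $|\mathcal{M}(P_0)|$; this turns the would-be tree recursion into a single geometric summation in $k$ in which the weak-porosity decay $(1-c)^k$ and the shrinkage $\delta^{-k\sigma\gamma/n}$ (coming from (b)) combine correctly, precisely when $\gamma$ satisfies the bound displayed above.
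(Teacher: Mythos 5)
Your first reduction and your decay (a) coincide with the paper's starting point, and your observation (b) is true, but the argument breaks at the ``quantitative comparison'' step: the displayed level-$k$ bound with ratio $(1-c)\delta^{-\sigma\gamma/n}$ does not follow from the ingredients you list. To bound $\lvert\mathcal{M}(R)\rvert^{-\gamma}$ from above you need a \emph{lower} bound on $\lvert\mathcal{M}(R)\rvert$; your (b) is an upper bound (the wrong direction), and Lemma~\ref{lemma.parent}\,\ref{it3.parent} applied to the pair $R\subset P_0$ only gives $\lvert\mathcal{M}(R)\rvert\gtrsim(\ell(R)/\ell(P_0))^{\sigma}\lvert\mathcal{M}(P_0)\rvert$, which is useful only if $\ell(P_0)/\ell(R)$ grows at most exponentially in $k$. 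It does not: the only size bound available for a child $P\in\mathcal{G}_\delta(R)$ is $\lvert P\rvert\ge\delta\lvert\mathcal{M}(R)\rvert$, and the ratio $\lvert\mathcal{M}(R)\rvert/\lvert R\rvert$ is unbounded below for weakly porous non-porous sets. Already for $E=\Z$, $\delta=\tfrac12$ and $P_0=[0,2^m)$ the cubes of $\mathcal{G}_\delta^1$ have length $\tfrac14$ while $\ell(P_0)=2^m$, so the factor $(\ell(P_0)/\ell(R))^{\sigma\gamma}$ cannot be absorbed into anything of the form $C\delta^{-k\sigma\gamma/n}$. (Trying instead to control $\sum_{R\in\mathcal{G}_\delta^k}\lvert R\rvert^{1-\sigma\gamma/n}$ through $\min_R\lvert R\rvert$ fails for the same reason: iterating $\lvert R\rvert\ge\delta\lvert\mathcal{M}(R_p)\rvert\ge\delta c_0(\lvert R_p\rvert/\lvert P_0\rvert)^{\sigma/n}\lvert\mathcal{M}(P_0)\rvert$, with $\sigma>n$, degrades doubly exponentially in $k$, which the factor $(1-c)^k$ cannot compensate.)

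The missing idea --- and the paper's key step --- is a per-child comparison of the $\mathcal{M}$-cubes themselves: if $P\in\mathcal{G}_\delta(R)$, then by maximality the dyadic parent $\pi P$ meets some $Q\in\mathcal{F}_\delta(R)$, nestedness forces $Q\subset\pi P$, and Lemma~\ref{lemma.parent}\,\ref{it2.parent} applied to $P\subset\pi P$ gives $\lvert\mathcal{M}(P)\rvert\ge\sigma\lvert\mathcal{M}(\pi P)\rvert\ge\sigma\lvert Q\rvert\ge\sigma\delta\lvert\mathcal{M}(R)\rvert$. Iterating this along the tree yields $\lvert\mathcal{M}(R)\rvert\ge(\sigma\delta)^k\lvert\mathcal{M}(P_0)\rvert$ for $R\in\mathcal{G}_\delta^k$, which combined with your (a) gives the genuine geometric level-$k$ estimate, with ratio $(1-c)(\sigma\delta)^{-\gamma}$, and the lemma follows for $\gamma$ small. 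Note also that your ``main obstacle'' remark is backwards: the clean per-node recursion $\sum_{P\in\mathcal{G}_\delta(R)}\lvert P\rvert\,\lvert\mathcal{M}(P)\rvert^{-\gamma}\le(1-c)(\sigma\delta)^{-\gamma}\lvert R\rvert\,\lvert\mathcal{M}(R)\rvert^{-\gamma}$ is exactly what the paper's induction establishes; it requires no control on $\lvert R\rvert/\lvert\mathcal{M}(R)\rvert$, only the child bound just described.
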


\begin{proof}
Let $0<\gamma<\frac{1}{n}$, whose exact value will be fixed later;
 we remark that both inequalities  $\gamma>0$ and $\gamma<\frac{1}{n}$
are needed in Lemma \ref{lemma.sufficiency} below. 
By the definition of $\mathcal{F}^k_\delta$, we obtain 
\begin{equation}\label{e.start}
\begin{split}
\sum_{Q\in\mathcal{F}_\delta^k}\lvert Q\rvert^{1-\gamma}
&\le \sum_{R\in\mathcal{G}_\delta^{k-1}}\sum_{Q\in\mathcal{F}_\delta(R)}
  \delta^{-\gamma} \lvert\mathcal{M}(R)\rvert^{-\gamma}\lvert Q\rvert\\
&\le \delta^{-\gamma} \sum_{R\in\mathcal{G}_\delta^{k-1}} \lvert\mathcal{M}(R)\rvert^{-\gamma}\lvert R\rvert,
\end{split}
\end{equation}
for every $k=1,2,\ldots$.

Next, we show by induction that 
\begin{equation}\label{e.induction}
\sum_{R\in\mathcal{G}_\delta^{k-1}}\lvert\mathcal{M}(R)\rvert^{-\gamma}\lvert R\rvert
\le ((1-c)(\sigma\delta)^{-\gamma})^{k-1}\lvert \mathcal{M}(P_0)\rvert^{-\gamma}\lvert P_0\rvert
\end{equation}
for every $k\in \N$. 
If $k=1$, this is immediate since $\mathcal{G}_\delta^{k-1}=\{P_0\}$.

Then we assume that \eqref{e.induction} holds for some $k\in \N$. 
Fix $R\in\mathcal{G}_\delta^{k-1}$ and let $P\in\mathcal{G}_\delta(R)$.
Since
$P$ is a maximal dyadic cube in $R\setminus \bigcup_{Q\in\mathcal{F}_\delta(R)}Q$ and $\mathcal{F}_\delta(R)\not=\emptyset$ by weak porosity,
the dyadic parent $\pi P$ intersects a cube $Q$  in $\mathcal{F}_\delta(R)$.

Since $\pi P,Q\in\mathcal{D}(R)$, we  have  $\pi P\subset Q$ or $Q\subset \pi P$
by the nestedness
property (D3) of dyadic cubes. 
Clearly $\pi P\subset  Q$ is not possible, as this would lead to the contradiction
$P\subset  \pi P\subset Q\subset \bigcup_{Q'\in\mathcal{F}_\delta(R)}Q'$. 
Therefore $Q\subset\pi P$.
By Lemma~\ref{lemma.parent}\,\ref{it2.parent}, there exists a constant
$\sigma=\sigma(c,\delta,n)>0$ such that 
\[
\lvert \mathcal{M}(P)\rvert\ge \sigma\lvert \mathcal{M}(\pi P)\rvert.
\]
Using also the definition of $\mathcal{F}_\delta(R)$, we get
\begin{equation*}\label{e.step_down}
\lvert \mathcal{M}(P)\rvert \ge \sigma \lvert \mathcal{M}(\pi P)\rvert \ge \sigma \lvert Q\rvert\ge \sigma\delta  \lvert \mathcal{M}(R)\rvert.
\end{equation*}
On the other hand,
since $E$ is weakly porous, we have by~\eqref{eq.firstconditionweakporosity} that 
\begin{align*}
\sum_{P\in\mathcal{G}_\delta(R)}\lvert P\rvert
=\biggl(\lvert R\rvert-\sum_{Q\in\mathcal{F}_\delta(R)} \lvert Q\rvert\biggr)
\le (1-c)\lvert R\rvert.
\end{align*}

Applying the two estimates above
and the induction hypothesis \eqref{e.induction} for $k$, we obtain
\begin{align*}
\sum_{P\in\mathcal{G}_\delta^{k}}\lvert\mathcal{M}(P)\rvert^{-\gamma}\lvert P\rvert
&\le \sum_{R\in\mathcal{G}_\delta^{k-1}} \sum_{P\in\mathcal{G}_\delta(R)}(\sigma\delta)^{-\gamma}\lvert\mathcal{M}(R)\rvert^{-\gamma}\lvert P\rvert\\
&\le (\sigma\delta)^{-\gamma}\sum_{R\in\mathcal{G}_\delta^{k-1}} \lvert \mathcal{M}(R)\rvert^{-\gamma}
\sum_{P\in\mathcal{G}_\delta(R)}\lvert P\rvert\\
&\le (\sigma\delta)^{-\gamma}\sum_{R\in\mathcal{G}_\delta^{k-1}} \lvert \mathcal{M}(R)\rvert^{-\gamma}(1-c)\lvert R\rvert\\
&\le (1-c)(\sigma\delta)^{-\gamma}((1-c)(\sigma\delta)^{-\gamma})^{k-1}\lvert \mathcal{M}(P_0)\rvert^{-\gamma}\lvert P_0\rvert\\
&\le ((1-c)(\sigma\delta)^{-\gamma})^{k}\lvert \mathcal{M}(P_0)\rvert^{-\gamma}\lvert P_0\rvert.
\end{align*}
This proves \eqref{e.induction} for $k+1$,
and thus the claim holds for every $k\in\N$, 
by the principle of induction.

Now choose
$\gamma=\gamma(c,\delta,n)\in  (0,1/n)$ to be such that 
 $(1-c)(\sigma\delta)^{-\gamma} < 1$. 
Observe that 
\[\sum_{k=1}^\infty((1-c)(\sigma\delta)^{-\gamma})^{k-1} 
= C(c,\sigma,\delta,\gamma) = C(c,\delta,n)< \infty.
\]
Hence, by using also \eqref{e.start} and \eqref{e.induction},
we have
\begin{align*}
\sum_{k=1}^\infty \sum_{Q\in\mathcal{F}_\delta^k}\lvert Q\rvert^{1-\gamma}
&\le\sum_{k=1}^\infty   \delta^{-\gamma} \sum_{R\in\mathcal{G}_\delta^{k-1}} \lvert\mathcal{M}(R)\rvert^{-\gamma}\lvert R\rvert\\
&\le \sum_{k=1}^\infty\delta^{-\gamma}((1-c)(\sigma\delta)^{-\gamma})^{k-1}\lvert \mathcal{M}(P_0)\rvert^{-\gamma}\lvert P_0\rvert\\
&\le \delta^{-\gamma} \lvert\mathcal{M}(P_0)\rvert^{-\gamma}\lvert P_0\rvert \sum_{k=1}^\infty((1-c)(\sigma\delta)^{-\gamma})^{k-1}\\
&\le   C(c,\delta,n)  \lvert P_0\rvert \lvert \mathcal{M}(P_0)\rvert^{-\gamma}. \qedhere
\end{align*}
\end{proof}

\begin{lemma}\label{lemma.sufficiency}
Assume that $E\subset \R^n$ is a weakly porous
set  with constants $0<c,\delta<1$. 
Then there are constants $0<\alpha=\alpha(c,\delta,n)<1$ and $C=C(n,c,\delta)$ 
such that $\dist(\cdot,E)^{-\alpha} \in A_1(\R^n)$
and $[\dist(\cdot,E)^{-\alpha}]_{A_1}\le C$.  
\end{lemma}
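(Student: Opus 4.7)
The plan is to fix $\gamma = \gamma(c,\delta,n) \in (0, 1/n)$ as supplied by Lemma~\ref{mainlemmasufficiency}, set $\alpha := n\gamma \in (0,1)$, and verify the $A_1$ inequality for $w(x) = \dist(x,E)^{-\alpha}$ with this $\alpha$. Since $\dist(\cdot, E) = \dist(\cdot, \ol{E})$ and weak porosity passes to the closure, I would assume $E$ is closed.

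The heart of the argument is the case of a cube $P_0$ that meets $E$. Here the decomposition from Lemma~\ref{e.e-inclusion}, together with $|E|=0$, gives
\[
\int_{P_0} w(x)\, dx = \sum_{k\ge 1}\sum_{Q\in\mathcal{F}_\delta^k}\int_Q w(x)\, dx.
\]
Each such $Q$ is $E$-free, so $\dist(x, E)\ge \dist(x,\partial Q)$ on $Q$, and a direct layer-cake computation on a cube of side $\ell(Q)$ (using that $\{x\in Q\colon \dist(x,\partial Q)>t\}$ is a concentric cube of side $\ell(Q)-2t$) yields
\[
\int_Q \dist(x,\partial Q)^{-\alpha}\, dx \le C(n,\alpha)\,\ell(Q)^{n-\alpha} = C(n,\alpha)\,|Q|^{1-\gamma},
\]
where finiteness requires $\alpha<1$. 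Summing and invoking Lemma~\ref{mainlemmasufficiency} gives $\vint_{P_0} w\, dx \le C(n,c,\delta)\,\ell(\mathcal{M}(P_0))^{-\alpha}$. For the matching lower bound on $\essinf_{P_0} w$, note that $E\cap P_0\neq\emptyset$ forces $\mathcal{M}(P_0)$ to be a strict dyadic subcube, and for each $x\in P_0$ the dyadic subcube $Q_x\in\mathcal{D}(P_0)$ containing $x$ with $\ell(Q_x)=2\ell(\mathcal{M}(P_0))$ must meet $E$ by maximality of $\mathcal{M}(P_0)$. Hence $\dist(x,E)\le \diam(Q_x)=2\sqrt{n}\,\ell(\mathcal{M}(P_0))$, which gives $\essinf_{P_0}w\ge (2\sqrt{n})^{-\alpha}\ell(\mathcal{M}(P_0))^{-\alpha}$, and the two estimates combine to $\vint_{P_0}w \le C\,\essinf_{P_0}w$ in this case.

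The remaining cubes are handled by reduction. If $\dist(P_0,E)\ge \ell(P_0)$, then $\dist(x,E)$ lies in $[\dist(P_0,E),(1+\sqrt{n})\dist(P_0,E)]$ for every $x\in P_0$, so $w$ oscillates on $P_0$ by at most a factor of $(1+\sqrt{n})^{\alpha}$ and the $A_1$ inequality is trivial. Otherwise $\dist(P_0,E)<\ell(P_0)$, and I would take $P_0^*$ to be the cube concentric with $P_0$ of side length $C(n)\,\ell(P_0)$, chosen so that $P_0\subset P_0^*$, $|P_0^*|\le C'(n)|P_0|$, and $P_0^*\cap E\neq\emptyset$. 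Applying the preceding case to $P_0^*$ and using $P_0\subset P_0^*$,
\[
\vint_{P_0}w \le \frac{|P_0^*|}{|P_0|}\,\vint_{P_0^*}w \le C\,\essinf_{P_0^*}w \le C\,\essinf_{P_0}w.
\]

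The main obstacle is synchronising the upper bound $\vint_{P_0}w \le C\,\ell(\mathcal{M}(P_0))^{-\alpha}$ from Lemma~\ref{mainlemmasufficiency} with the lower bound $\essinf_{P_0}w \ge C^{-1}\ell(\mathcal{M}(P_0))^{-\alpha}$ coming from the maximality of $\mathcal{M}(P_0)$, so that the constants depend only on $n, c, \delta$ and meet at the same geometric scale. The upper bound $\gamma<1/n$ in Lemma~\ref{mainlemmasufficiency} — equivalently $\alpha<1$ — is precisely what secures the integrability of $\dist(\cdot,\partial Q)^{-\alpha}$ required by the layer-cake step, explaining why both constraints on $\gamma$ appear in that lemma.
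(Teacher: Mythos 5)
Your proof is correct and follows essentially the same route as the paper: the decomposition of Lemma~\ref{e.e-inclusion}, the estimate $\int_Q \dist(\cdot,\partial Q)^{-\alpha}\le C|Q|^{1-\gamma}$ on $E$-free cubes, the summation via Lemma~\ref{mainlemmasufficiency}, and the $\essinf$ bound from the maximality of $\mathcal{M}(P_0)$. The only (harmless) deviations are cosmetic: you bound $\dist(x,E)$ using the dyadic cube of side $2\ell(\mathcal{M}(P_0))$ containing $x$ rather than the parent of the maximal $E$-free cube containing $x$, and you treat $E$-free cubes with $\dist(P_0,E)<\ell(P_0)$ by enlarging to a concentric cube meeting $E$ instead of the paper's direct two-case estimate.
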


\begin{proof}
Observe that
the closure $\iol{E}$
is also weakly porous.
Since $\dist(\cdot,E)=\dist(\cdot,\iol{E})$, we may
assume in the sequel that $E$ is a weakly porous closed set. 
Throughout this proof $C$ denotes
a constant that can depend on $n$, $c$ and $\delta$.
Let $0<\gamma=\gamma(n,c,\delta)<\frac{1}{n}$ be as in Lemma \ref{mainlemmasufficiency}.
Fix a cube $P_0 \subset \R^n,$ and assume first that $P_0$ is not an $E$-free cube.
Let sets $\mathcal{F}_\delta^k$, for $P_0$ and $k=1,2,\ldots$, be defined
as above. 

 Since $\gamma n < 1$, we have 
for every $E$-free cube $Q$ the estimate
\begin{equation}\label{standardestimatecubes}
\int_Q \dist(x, E)^{-\gamma n} \, dx \leq \int_Q \dist(x, \partial Q)^{-\gamma n} \, dx 
= C(\gamma,n) \ell(Q)^{n-\gamma n} =  C |Q|^{1-\gamma }.
\end{equation}
 In particular, the upper bound
$\gamma n < 1$ implies that the second integral  in~\eqref{standardestimatecubes}  is finite. 
Bearing in mind that $\lvert E \rvert=0,$ using
Lemma~\ref{e.e-inclusion} and combining \eqref{standardestimatecubes} with Lemma~\ref{mainlemmasufficiency}, 
we  obtain 
\begin{align*}
\vint_{P_0}  \dist(x, E)^{-\gamma n} \, dx & =\frac{1}{\lvert P_0 \rvert} \int_{P_0\setminus E}  \dist(x, E)^{-\gamma n} \, dx= \frac{1}{\lvert P_0 \rvert} \sum_{k=1}^\infty \sum_{Q\in\mathcal{F}_\delta^k} \int_{Q}  \dist(x, E)^{-\gamma n} \, dx \\
&  \leq \frac{C}{\lvert P_0 \rvert} \sum_{k=1}^\infty \sum_{Q\in\mathcal{F}_\delta^k} \lvert Q\rvert^{1-\gamma} \leq  C \lvert \mathcal{M}(P_0)\rvert^{-\gamma}.
\end{align*}
Let $x\in P_0\setminus E$.
Since $E$ is closed, the point $x$ is contained in 
a maximal $E$-free
dyadic cube $Q\in\mathcal{D}(P_0)$. 
Recall that $P_0$ is not
$E$-free,  and so  $Q$ is a  strict  
subcube of $P_0$. Furthermore  $\pi Q$ is not $E$-free
due to maximality of $Q$.
This implies that
\[
\dist(x,E) \leq \diam(\pi Q) = 2\diam(Q)=2\sqrt n\, \ell(Q)\le 2\sqrt n\, \ell(\mathcal{M}(P_0)).
\]
Hence,
\[
\essinf_{x\in {P_0}} \dist(x,E)^{-\gamma n} \geq (2 \sqrt{n})^{-\gamma n}\ell(\mathcal{M}(P_0))^{-\gamma n} = C(n,c,\delta) \lvert \mathcal{M}(P_0) \rvert^{-\gamma},
\]
 and we conclude that  
\begin{equation}\label{e.p_01}
\vint_{P_0}  \dist(x, E)^{-\gamma n} \, dx \leq  C \, \essinf_{x\in {P_0}} \dist(x,E)^{-\gamma n}.
\end{equation}

It remains to consider the case where $P_0$ is an $E$-free cube.  We  study two situations separately. 
If $\dist(P_0,E)<2\diam(P_0),$ then we have $\dist(x,E) \leq 3 \diam(P_0)$ for every $x\in P_0,$ and so
\[
\essinf_{x\in {P_0}} \dist(x,E)^{-\gamma n} \geq \left( 3 \diam(P_0) \right)^{-\gamma n} \geq  C |P_0|^{-\gamma}.
\]  
Using \eqref{standardestimatecubes}, together with this observation, we obtain 
\begin{equation}\label{e.p_02}
\vint_{P_0}  \dist(x, E)^{-\gamma n} \, dx \leq  C |P_0|^{-\gamma} \leq  C \, \essinf_{x\in {P_0}} \dist(x,E)^{-\gamma n}.
\end{equation}
Finally, we consider
the case $\dist(P_0,E)\geq 2\diam(P_0)$. If $x,y\in P_0$,  then  
\begin{align*}
\dist(x,E) & \geq \dist(y,E)-  \lvert x-y\rvert \geq \dist(y,E)-\diam(P_0)\\
& \geq \dist(y,E) - \tfrac{1}{2} \dist(P_0,E) \geq \tfrac{1}{2} \dist(y,E).
\end{align*}
Hence, 
\[
 \dist(x,E)^{-\gamma n}\le  C \essinf_{y\in P_0} \dist(y,E)^{-\gamma n}
\]
for all $x\in P_0$,  and so 
\begin{equation}\label{e.p_03}
\vint_{P_0}  \dist(x, E)^{-\gamma n} \, dx \leq  C\essinf_{y\in P_0} \dist(y,E)^{-\gamma n}. 
\end{equation}
By combining 
estimates \eqref{e.p_01}, \eqref{e.p_02}, and \eqref{e.p_03}, we see that $\dist(\cdot,E)^{-\gamma n} \in A_1(\R^n)$,
 and this proves the theorem with  $\alpha=\gamma n$.
\end{proof}

\section{Muckenhoupt exponent}\label{s.upper_assouad}

In this section, we introduce the concept of Muckenhoupt exponent
and explore its connections to weak porosity and the $A_p$ properties of distance weights,
for $1\le p<\infty$.
In particular, we prove Theorems~\ref{t.A_p_char} and~\ref{t.A_1_char}
at the end of this section. 

For a bounded set $A\subset \R^n$ and $r>0$, we let $N(A,r)$ denote the minimal
number of open balls of radius $r$ that are needed to cover the set $A$. 
Recall that the \emph{Assouad dimension} $\dima(E)$ of $E\subset\R^n$ 
is then the infimum of $\lambda\ge 0$ such that
\begin{equation*}
 N\bigl(E\cap B(x,R),r\bigr)\le
 C\biggl(\frac {R}{r}\biggr)^{\lambda}
\end{equation*}
for every $x\in E$ and $0<r<R$. 
Equivalently, $\dima(E)=n-\codima(E)$, where
the \emph{Assouad codimension}
$\codima(E)$ is the supremum of $\alpha\ge 0$ such that
\begin{equation}\label{e.asscodim}
\frac{\lvert E_r\cap B(x,R)\rvert}{\lvert B(x,R)\rvert} \le C\biggl(\frac {R}{r}\biggr)^{-\alpha}
\end{equation}
for every $x\in E$ and $0<r<R$. 
Here 
\[
E_r=\{y\in\R^n:\dist(y,E)<r\}
\]
is the \emph{open $r$-neighborhood of $E$}.
See e.g.~\cite[(3.11)]{KaenmakiLehrbackVuorinen2013}
for more details concerning this equivalence,  which
also follows from Lemma~\ref{l.balls_and_meas}.  

It is well-known that a set $E\subset\R^n$ is porous if and only if 
$\dima(E)<n$, or equivalently $\codima(E)>0$,
as was already pointed out in the introduction.
See e.g.~\cite[Section~5]{Luukkainen1998} 
or \cite[Theorem~10.25]{MR4306765} 
for details.
 The following Muckenhoupt exponent
can be seen as a refinement of the Assouad codimension:
for porous sets these two agree
but the Muckenhoupt exponent can be nonzero also for nonporous sets;
see the comment after Definition~\ref{d.udima_def}.

\begin{definition}\label{d.udima_def}
Let $E\subset \R^n$. 
\begin{itemize}
\item[(i)] If  $B(x,r)$ is a ball in $\R^n$, we denote
by $h_E(B(x,r))$ the supremum of all $t>0$
such that $B(y,t)\subset B(x,r)\setminus E$ for some $y\in B(x,r)$. 
If there is no such number $t>0$, then
we set $h_E(B(x,r))=0$. 
\item[(ii)]
If $h_E(B(x,R))>0$ for every $x\in E$ and $R>0$,
then the
\emph{Muckenhoupt exponent} $\Mu(E)$
is the supremum of the numbers $\alpha\in \R$ 
for which there exists a constant $C$ such that 
\begin{equation}\label{e.muckdim}
\frac{\lvert E_r\cap B(x,R)\rvert}{\lvert B(x,R)\rvert} \le C\biggl(\frac {h_E(B(x,R))}{r}\biggr)^{-\alpha}
\end{equation}
for every $x\in E$ and $0<r  <  h_E(B(x,R)) \le R$. 
If $h_E(B(x,R))=0$ for some $x\in E$ and $R>0$,
then we set $\Mu(E)=0$. 
\end{itemize} 
\end{definition}

Observe that 
$h_E(B(x,R))\le R/2$ if $x\in E$. 
It is clear from the definition that $\Mu(E)\ge 0$ for all sets $E\subset\R^n$,
since~\eqref{e.muckdim} always holds with $\alpha=0$ if $h_E(B(x,R))>0$. 
If  $E\subset\R^n$ is porous, then
$cR\le h_E(B(x,R))\le R/2$ for all $x\in E$ and $R>0$,
showing that $\Mu(E)=\codima(E)$. 
On the other hand, if $E\subset\R^n$ is not porous,
then $\codima(E)=0\le \Mu(E)$, and thus always $\codima(E)\le \Mu(E)$. 
This inequality is strict if and only if $E$ is weakly porous but not porous
since the weak
porosity of $E$ is characterized by $\Mu(E)>0$,
see Corollary~\ref{c.wp_and_mu}. 
As an example, it is straightforward to see that $\codima(\Z)=0$ and  $\Mu(\Z)=1$.
See also Section~\ref{s.circular} for other examples of such sets.

 In Lemma~\ref{l.alt_char_mu} below we give for the Muckenhoupt exponent an alternative characterization,
which resembles the definition of the Assouad dimension. The following estimate will be applied in the
proof of Lemma~\ref{l.alt_char_mu}.

\begin{lemma}\label{l.balls_and_meas}
Let $E\subset \R^n$, $x\in E$ and $0<r<R$. Then
\begin{equation*}
C_1(n) N\bigl(E\cap B(x,R/2),r\bigr)\le\frac{\lvert E_r\cap B(x,R)\rvert}{r^n}\le C_2(n) N\bigl(E\cap B(x,2R),r\bigr).
\end{equation*}
\end{lemma}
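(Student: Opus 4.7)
The plan is to prove both inequalities by standard separated-sets and covering arguments, bounding the Lebesgue measure of $E_r\cap B(x,R)$ from below by a packing of disjoint small balls centered on $E$ and from above by the union of slightly enlarged covering balls.

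For the lower bound, I would first select a maximal $r$-separated subset $\{y_1,\ldots,y_M\}$ of $E\cap B(x,R/2)$. Since the balls $B(y_i,r)$ then cover $E\cap B(x,R/2)$, we have $M\ge N(E\cap B(x,R/2),r)$. Because the $y_i$ are pairwise at distance at least $r$, the balls $B(y_i,r/2)$ are pairwise disjoint, and each is contained in $E_r$ (as $y_i\in E$) and in $B(x,R/2+r/2)\subset B(x,R)$ (using the assumption $r<R$, so in particular $r/2<R/2$). Summing the volumes of these disjoint balls gives
\[
\lvert E_r\cap B(x,R)\rvert\ge \sum_{i=1}^M \lvert B(y_i,r/2)\rvert = M\,\omega_n (r/2)^n\ge c(n)\,N(E\cap B(x,R/2),r)\,r^n,
\]
which is the claimed lower bound.

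For the upper bound, let $N=N(E\cap B(x,2R),r)$ and fix a cover of $E\cap B(x,2R)$ by balls $B(z_1,r),\ldots,B(z_N,r)$. For any $y\in E_r\cap B(x,R)$ there exists $z\in E$ with $\lvert y-z\rvert<r$; since $\lvert z-x\rvert<r+R<2R$ we have $z\in E\cap B(x,2R)$, so $z\in B(z_j,r)$ for some $j$, and hence $y\in B(z_j,2r)$. Thus $E_r\cap B(x,R)\subset\bigcup_{j=1}^N B(z_j,2r)$, and summing volumes yields
\[
\lvert E_r\cap B(x,R)\rvert\le N\cdot \omega_n(2r)^n = C(n)\,N(E\cap B(x,2R),r)\,r^n,
\]
completing the proof. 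There is no serious obstacle; the only thing to watch is the choice of enlargement factors ($r/2$ on one side, $2r$ on the other) together with the hypothesis $r<R$ to keep the packing balls inside $B(x,R)$ and to absorb the $r$ in the inclusion $B(x,R+r)\subset B(x,2R)$.
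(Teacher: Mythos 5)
Your proof is correct and follows essentially the same route as the paper: a maximal $r$-separated (net) argument packing disjoint balls $B(y_i,r/2)\subset E_r\cap B(x,R)$ for the lower bound, and the inclusion $E_r\cap B(x,R)\subset\bigcup_j B(z_j,2r)$ for a minimal cover of $E\cap B(x,2R)$ for the upper bound. The enlargement factors and the use of $r<R$ are handled just as in the paper's proof, so there is nothing to add.
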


\begin{proof}
Let $\{B(x_i,r)\}_{i=1}^N$ be a cover of $E\cap B(x,2R)$, with
$N=N\bigl(E\cap B(x,2R),r\bigr)$. Then
\[
E_r\cap B(x,R)\subset \bigcup_{i=1}^N B(x_i,2r),
\]
and thus
\begin{equation*}
\lvert E_r\cap B(x,R)\rvert \le C(n) N(2r)^n
 = C_2(n)r^n N\bigl(E\cap B(x,2R),r\bigr).
\end{equation*}
This proves the second inequality in the claim.

Conversely, let
$\{B(x_i,r)\}_{i=1}^N$ be a cover of $E\cap B(x,R/2)$
such that $x_i\in E\cap B(x,R/2)$ for all $i=1,\ldots,N$  and the balls
$B(x_i,r/2)$ are pairwise disjoint (such a cover can be found by 
choosing $\{x_i\}_{i=1}^N$ to be a maximal $r$-net in 
$E\cap B(x,R/2)$, see~\cite[p.~101]{MR1800917}). 
Then
\[
E_r\cap B(x,R)\supset \bigcup_{i=1}^N B(x_i,r/2),
\]
and thus 
\begin{equation*}
\lvert E_r\cap B(x,R)\rvert \ge C(n) N(r/2)^n\ge  C_1(n) r^n N\bigl(E\cap B(x,R/2),r\bigr).
\end{equation*}
This proves the first inequality in the claim.
\end{proof}

\begin{lemma}\label{l.alt_char_mu}
Let $E\subset \R^n$ be such
that $h_E(B(x,R))>0$ for every $x\in E$ and $R>0$. 
Then $\Mu(E)$
is the supremum of the numbers $\alpha\ge 0$ 
for which there exists a constant $C$ such that 
\begin{equation}\label{e.udima_def}
 N\bigl(E\cap B(x,R),r\bigr)\le
 C\biggl(\frac {R}{r}\biggr)^{n}\biggl(\frac {h_E(B(x,R))}{r}\biggr)^{-\alpha}
\end{equation}
for every $x\in E$ and $0<r  <  h_E(B(x,R)) \le R$. 
\end{lemma}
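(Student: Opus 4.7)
The plan is to transfer between the two formulations via the sandwich estimate of Lemma~\ref{l.balls_and_meas}, which compares $|E_r\cap B(x,R)|/r^n$ to covering numbers at the comparable scales $R/2$ and $2R$. A key auxiliary fact I will use repeatedly is the monotonicity of $h_E$: if $B_1\subset B_2$, then any ball $B(y,t)\subset B_1\setminus E$ also lies in $B_2\setminus E$, so $h_E(B_1)\le h_E(B_2)$. Combined with the elementary bound $h_E(B(x,r))\le r/2$ for $x\in E$ (since $x\in E$ forces the approximating ball $B(y,t)$ to satisfy $|y-x|\ge t$ and $|y-x|+t\le r$), this will guarantee that the admissibility conditions $r<h_E(B(x,R))\le R$ and $r<h_E(B(x,2R))\le 2R$ are simultaneously usable.

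First I would show that if \eqref{e.muckdim} holds for some $\alpha\ge 0$, then \eqref{e.udima_def} holds for the same $\alpha$. Given $x\in E$ and $0<r<h_E(B(x,R))\le R$, apply the first inequality of Lemma~\ref{l.balls_and_meas} with $2R$ in place of $R$ to obtain
\[
N(E\cap B(x,R),r)\le C(n)\,r^{-n}\,|E_r\cap B(x,2R)|.
\]
By monotonicity $r<h_E(B(x,R))\le h_E(B(x,2R))\le R$, so \eqref{e.muckdim} applies at $B(x,2R)$ and yields $|E_r\cap B(x,2R)|\le C R^n(h_E(B(x,2R))/r)^{-\alpha}$. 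Using $\alpha\ge 0$ together with monotonicity once more, $(h_E(B(x,2R))/r)^{-\alpha}\le (h_E(B(x,R))/r)^{-\alpha}$. Combining the two displays gives \eqref{e.udima_def}.

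The converse is symmetric: starting from \eqref{e.udima_def}, apply the second inequality of Lemma~\ref{l.balls_and_meas} to bound $|E_r\cap B(x,R)|$ by $C(n)r^n N(E\cap B(x,2R),r)$, invoke the hypothesis at $B(x,2R)$ (again permitted by $r<h_E(B(x,R))\le h_E(B(x,2R))$), divide by $|B(x,R)|\asymp R^n$, and reduce $h_E(B(x,2R))$ to $h_E(B(x,R))$ via monotonicity and $\alpha\ge 0$, arriving at \eqref{e.muckdim}. Finally, since \eqref{e.muckdim} holds trivially with $\alpha=0$ (the left side is at most $1$), the supremum in Definition~\ref{d.udima_def}(ii) taken over $\alpha\in\R$ coincides with its restriction to $\alpha\ge 0$, matching the range in \eqref{e.udima_def}; so the two suprema agree. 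There is no substantive obstacle, the only care needed is the bookkeeping of the comparable radii $R/2,R,2R$ and the absorption of dimensional constants.
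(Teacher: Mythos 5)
Your proposal is correct and follows essentially the same route as the paper: both directions are obtained from the two inequalities of Lemma~\ref{l.balls_and_meas} applied at the doubled radius $2R$, together with the monotonicity $h_E(B(x,R))\le h_E(B(x,2R))$ and $\alpha\ge 0$ to pass back to $h_E(B(x,R))$. Your packaging as a pointwise-in-$\alpha$ equivalence of \eqref{e.muckdim} and \eqref{e.udima_def} (plus the trivial $\alpha=0$ case to reconcile the ranges of the suprema) is a slightly cleaner bookkeeping of the same argument, not a different method.
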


\begin{proof}
Assume first that $\alpha\ge 0$ is such that~\eqref{e.udima_def}
holds for every $x\in E$ and $0<r < h_E(B(x,R))\le R$ with a constant $C_1$. 
Let $x\in E$ and $0<r < h_E(B(x,R))\le R$.
Then
$0<r < h_E(B(x,R))\le h_E(B(x,2R)) \le R<2R$, and
by Lemma~\ref{l.balls_and_meas} and~\eqref{e.udima_def} we have 
\begin{align*}
\frac{\lvert E_r\cap B(x,R)\rvert}{\lvert B(x,R)\rvert}
& \le C(n) \biggl(\frac {r}{R}\biggr)^{n} N\bigl(E\cap B(x,2R),r\bigr)\\
& \le C_1 C(n) \biggl(\frac {r}{R}\biggr)^{n} \biggl(\frac {2R}{r}\biggr)^{n}\biggl(\frac {h_E(B(x,2R))}{r}\biggr)^{-\alpha}\\
& \le C(n,C_1) \biggl(\frac {h_E(B(x,R))}{r}\biggr)^{-\alpha}.
\end{align*}
Thus $\alpha\le\Mu(E)$. 

By the definition of Muckenhoupt exponent, we always have $\Mu(E)\ge 0$.
If $\Mu(E)=0$ and~\eqref{e.udima_def} holds for $\alpha \geq 0,$ the preceding computation shows that $\alpha=0$ as well, and the result follows. 
Then assume that $0\le\alpha < \Mu(E)$
and let $x\in E$ and $0<r < h_E(B(x,R))\le R$. By Lemma~\ref{l.balls_and_meas}  and~\eqref{e.muckdim},
for $\alpha$ and a constant $C_\alpha$, 
we have
\begin{align*}
N\bigl(E\cap B(x,R),r\bigr) 
& \le C(n)\frac{\lvert E_r\cap B(x,2R)\rvert}{r^n}\\
& \le C(n)C_\alpha\biggl(\frac {2R}{r}\biggr)^{n}\biggl(\frac {h_E(B(x,2R))}{r}\biggr)^{-\alpha}\\
& \le C(n,C_\alpha)\biggl(\frac {R}{r}\biggr)^{n}\biggl(\frac {h_E(B(x,R))}{r}\biggr)^{-\alpha}.
\end{align*}
Since this holds for every $0\le\alpha < \Mu(E)$, we conclude that 
$\Mu(E)$ is indeed the supremum of $\alpha$ for which 
\eqref{e.udima_def} holds for all $x\in E$ and $0<r < h_E(B(x,R))\le R$.
\end{proof}

Next, we turn to the relations between the Muckenhoupt exponent and $A_1$ weights. 
Lemma~\ref{l.aikawa_assouad1} and Theorem~\ref{t.aikawa_assouad2} together characterize the property
$\dist(\cdot,E)^{-\alpha}\in A_1$, for $\alpha\neq 0$, in terms of the Muckenhoupt exponent of $E$; 
see the proof of Theorem~\ref{t.A_1_char} after the proof of Theorem~\ref{t.aikawa_assouad2}.

\begin{lemma}\label{l.aikawa_assouad1}
Let $E\subset \R^n$ be a nonempty set and let  $\alpha\in\R$  be
such that $\dist(\cdot,E)^{-\alpha}\in A_1$. Then $0\le \alpha\le \Mu(E)$. 
\end{lemma}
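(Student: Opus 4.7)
The plan is to extract both bounds on $\alpha$ directly from the $A_1$ condition applied to suitable cubes. Throughout I replace $E$ by its closure $\overline{E}$, which does not affect $w$ or the statement.

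For nonnegativity of $\alpha$, I would argue by contradiction. Suppose $\alpha<0$, so $w(x)=\dist(x,E)^{|\alpha|}$ vanishes on $\overline{E}$. Pick any $x_0\in \overline{E}$ and any cube $Q$ containing $x_0$ in its interior. For every $\eta>0$ the open neighborhood $E_\eta$ contains a small ball around $x_0$, so $Q\cap E_\eta$ has positive measure, and on this set $w<\eta^{|\alpha|}$. Hence $\essinf_Q w=0$. However, since $w\in A_1$ is a weight, it is positive almost everywhere and locally integrable, so $\vint_Q w>0$, contradicting the $A_1$ inequality. This forces $\alpha\ge 0$.

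For the upper bound $\alpha\le\Mu(E)$, the case $\alpha=0$ is immediate because $\Mu(E)\ge 0$ always, so assume $\alpha>0$. I first observe that $h_E(B(x,R))>0$ for every $x\in E$ and $R>0$, so $\Mu(E)$ is determined by~\eqref{e.muckdim} rather than set to $0$ by default: if $h_E(B(x,R))=0$, then $\overline{E}\supset B(x,R)$, forcing $w=+\infty$ on a set of positive measure and contradicting local integrability. Now fix $x\in E$ and $0<r<h:=h_E(B(x,R))\le R$, and take $Q=Q(x,R)$ as in~\eqref{e.cube}, so that $B(x,R)\subset Q$ and $|Q|=(2R)^n$ is comparable to $|B(x,R)|$. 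Since $\alpha>0$, every $z\in E_r$ satisfies $w(z)>r^{-\alpha}$, giving
\[
\vint_Q w \ge \frac{r^{-\alpha}\,|E_r\cap B(x,R)|}{|Q|}.
\]
To bound $\essinf_Q w$ from above, for any $0<\eta<h$ the definition of $h_E$ yields some $y\in B(x,R)$ with $B(y,h-\eta)\subset B(x,R)\setminus E$. Every $z\in B(y,(h-\eta)/2)$ then satisfies $\dist(z,E)\ge (h-\eta)/2$, so $w(z)\le\bigl((h-\eta)/2\bigr)^{-\alpha}$ on a set of positive measure inside $Q$, whence $\essinf_Q w\le\bigl((h-\eta)/2\bigr)^{-\alpha}$. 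Combining these two estimates with the $A_1$ inequality and letting $\eta\to 0^+$ produces
\[
\frac{|E_r\cap B(x,R)|}{|B(x,R)|} \le C(n,\alpha)\,[w]_{A_1}\,\Bigl(\frac{h_E(B(x,R))}{r}\Bigr)^{-\alpha},
\]
which is precisely~\eqref{e.muckdim} with exponent $\alpha$, so $\alpha\le\Mu(E)$.

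The argument is largely bookkeeping rather than hard analysis; the only mild care needed is that the witness ball $B(y,h-\eta)$ for $h_E$ must sit inside the cube $Q$ used in the $A_1$ estimate, but since $B(y,h-\eta)\subset B(x,R)\subset Q(x,R)$ this is automatic, so no additional dilation of $Q$ is required.
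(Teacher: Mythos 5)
Your proposal is correct and follows essentially the same route as the paper: rule out $\alpha<0$ via $\essinf_Q w=0$ near a point of $\overline{E}$, note $h_E(B(x,R))>0$ by local integrability, and then apply the $A_1$ inequality on $Q(x,R)$, bounding the average from below by $r^{-\alpha}|E_r\cap B(x,R)|/|Q|$ and the essential infimum from above by (a constant times) $h_E(B(x,R))^{-\alpha}$ to obtain \eqref{e.muckdim} with exponent $\alpha$. The only cosmetic difference is that the paper gets $\essinf_{Q(x,R)}\dist(\cdot,E)^{-\alpha}\le h_E(B(x,R))^{-\alpha}$ directly from $h_E(B(x,R))\le \esssup_{Q(x,R)}\dist(\cdot,E)$, whereas you use a witness ball and a limit $\eta\to 0^+$, costing only a harmless factor $2^{\alpha}$.
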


\begin{proof}
 Assume first that $\alpha<0$. 
  Let $x\in E$ and $r>0$. 
 Then  
\[
\vint_{Q(x,r)} \dist(y,E)^{-\alpha}\,dy  \le C\, \essinf_{y\in Q(x,r)} \dist(y,E)^{-\alpha}=0;
\]
 here  the cube $Q(x,r)$ is as  in \eqref{e.cube}. 
Thus $\dist(y,E)^{-\alpha}=0$ for almost every $y\in Q(x,r)$,
which is a contradiction since $\dist(\cdot,E)^{-\alpha}$ is a weight. 
Hence $\alpha\ge 0$. 

 The claim holds if $\alpha=0$, and so we may assume that $\alpha>0$.
Then  $h_E(B(x,R))>0$ for every $x\in E$ and $R>0$. 
Indeed, otherwise there exists a ball $B(x,R)$ such
that $\dist(y,E)=0$ for every $y\in B(x,R)$,
and therefore $\dist(\cdot,E)^{-\alpha}$
is not locally integrable.  This
is again a contradiction since $\dist(\cdot,E)^{-\alpha}$ is a weight. 

Let $x\in E$ and $0<r < h_E(B(x,R))\le R$, and write $F= E_r \cap B(x,R)$. 
Let  
$C_1$ be the constant in the $A_1$ condition~\eqref{e.a_1} for $\dist(\cdot,E)^{-\alpha}$.
Observe  from $B(x,R)\subset Q(x,R)$  that
\[
h_E(B(x,R))\le 
\esssup_{y\in Q(x,R)}\dist(y,E), 
\]
and hence
\[
\essinf_{y\in  Q(x,R) }\dist(y,E)^{-\alpha}\le h_E(B(x,R))^{-\alpha}.
\]
Since $\dist(y,E)< r$
for every $y\in F$ and
$F\subset B(x,R) \subset Q(x,R),$
using the $A_1$ condition~\eqref{e.a_1} 
we obtain
\begin{equation*}\begin{split}
\lvert F\rvert 
&  \le r^{\alpha} \int_{F} \dist(y,E)^{-\alpha}\,dy 
   \le r^{\alpha} \int_{Q(x,R)} \dist(y,E)^{-\alpha}\,dy \\
& \le  C_1 r^{\alpha}  \lvert Q(x,R)\rvert h_E(B(x,R))^{-\alpha}
   =  C(n,C_1)R^{n}\biggl(\frac {h_E(B(x,R))}{r}\biggr)^{-\alpha}.
\end{split}\end{equation*}
Thus 
\[
\frac{\lvert E_r \cap B(x,R) \rvert}{\lvert B(x,R)\rvert}=\frac{\lvert F \rvert}{\lvert B(x,R)\rvert} \le C(n,C_1)\biggl(\frac {h_E(B(x,R))}{r}\biggr)^{-\alpha},
\] 
and the claim $\Mu(E)\ge  \alpha$ follows.
\end{proof}

\begin{theorem}\label{t.aikawa_assouad2}
Let $E\subset \R^n$ be a nonempty
set 
and assume that $0\le \alpha < \Mu(E)$. Then 
$\dist(\cdot,E)^{-\alpha}\in A_1$.
\end{theorem}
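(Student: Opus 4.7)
My plan is to show, for each cube $Q\subset\R^n$, the bound $\vint_Q \dist(\cdot,E)^{-\alpha}\,dx \le C d^{-\alpha}$, where $d=\esssup_{x\in Q}\dist(x,E)$, so that $d^{-\alpha}=\essinf_{x\in Q}\dist(x,E)^{-\alpha}$. The case $\alpha=0$ is trivial, so I assume $0<\alpha<\Mu(E)$ and fix an auxiliary exponent $\alpha'$ with $\alpha<\alpha'<\Mu(E)$. By Definition~\ref{d.udima_def} applied with $\alpha'$, there is a constant $C_{\alpha'}$ such that
\[
|E_r\cap B(x,R)|\le C_{\alpha'}R^n\bigl(h_E(B(x,R))/r\bigr)^{-\alpha'}
\]
for every $x\in E$ and $0<r<h_E(B(x,R))\le R$; in particular $h_E(B(x,R))>0$ for all such $x$ and $R$.

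I would split the argument for each cube $Q$ according to $\dist(Q,E)$. If $\dist(Q,E)\ge\ell(Q)$, then the $1$-Lipschitz property of $\dist(\cdot,E)$ together with $\diam(Q)=\sqrt n\,\ell(Q)\le\sqrt n\,\dist(Q,E)$ shows that $\dist(\cdot,E)$ oscillates on $Q$ by at most the factor $1+\sqrt n$, and the $A_1$ inequality on $Q$ is immediate. The substantive case is $\dist(Q,E)<\ell(Q)$, where I pick $x_*\in E$ and $x\in Q$ with $|x-x_*|<\ell(Q)$ and set $R=2(1+\sqrt n)\ell(Q)$. Using the bound $d\le|x-x_*|+\diam(Q)\le(1+\sqrt n)\ell(Q)$, a direct triangle-inequality computation verifies both $Q\subset B(x_*,R)$ and $B(y,d)\subset B(x_*,R)$, where $y\in\ol Q$ is a point at which $\dist(\cdot,E)$ attains its maximum $d$ on $\ol Q$. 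Since $\dist(y,E)=d$ forces $B(y,d)\cap E=\emptyset$, the ball $B(y,d)$ is an admissible competitor in the supremum defining $h:=h_E(B(x_*,R))$, yielding the key geometric bound $h\ge d$.

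With $h\ge d$ in hand, the layer-cake formula gives
\[
\int_Q\dist(x,E)^{-\alpha}\,dx\le\int_{B(x_*,R)}\dist(x,E)^{-\alpha}\,dx=\alpha\int_0^\infty t^{-\alpha-1}|E_t\cap B(x_*,R)|\,dt,
\]
and I would split the $t$-integral at $t=h$. On $(0,h)$ the estimate from Definition~\ref{d.udima_def} yields $|E_t\cap B(x_*,R)|\le C_{\alpha'}R^n h^{-\alpha'}t^{\alpha'}$, and the condition $\alpha'>\alpha$ makes the resulting piece a constant multiple of $R^n h^{-\alpha}$. On $[h,\infty)$ the trivial bound $|E_t\cap B(x_*,R)|\le|B(x_*,R)|\le C(n)R^n$ combined with $\alpha>0$ again produces a multiple of $R^n h^{-\alpha}$. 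Finally, $h\ge d$ and $R^n\le C(n)|Q|$ together yield $\int_Q\dist(x,E)^{-\alpha}\,dx\le C|Q|d^{-\alpha}$, which is precisely the $A_1$ condition.

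The main obstacle I foresee is the geometric step $h\ge d$. The Muckenhoupt exponent is phrased in terms of ``holes'' of $E$ centered at points of $E$, whereas the $A_1$ inequality for the weight $\dist(\cdot,E)^{-\alpha}$ involves the maximum of $\dist(\cdot,E)$ over a cube $Q$ that need not be centered on $E$; the careful choice of $R$ above is exactly what bridges these two viewpoints. Once this is established, the remainder reduces to a routine layer-cake computation whose convergence is ensured by the interpolation between $\alpha$ and $\Mu(E)$.
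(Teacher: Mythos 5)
Your proof is correct and takes essentially the same route as the paper's: both reduce to a ball of comparable size centered at a point of $E$, observe that $\esssup_{Q}\dist(\cdot,E)$ is dominated by $h_E$ of that enlarged ball (your inequality $h\ge d$ is the paper's observation that $\dist(y,E)\le h_E(B(x,2r))$), and then apply the defining estimate \eqref{e.muckdim} at an auxiliary exponent strictly between $\alpha$ and $\Mu(E)$ over all scales below $h_E$, with the trivial bound above. Your continuous layer-cake integral is just the paper's dyadic sum over the annuli $A_j$ in integrated form, so there is no gap of substance.
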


\begin{proof}
 It suffices to
show that there
exists a constant $C>0$ such that
\begin{equation}\label{e.suff}
\vint_{B(x,r)} \dist(y,E)^{-\alpha}\,dy
\le  C \essinf_{y\in B(x,r)}\dist(y,E)^{-\alpha}
\end{equation}
for  all 
$x\in E$ and $r>0$. 
 Indeed, if $\dist(Q,E)< 2\diam(Q)$ for a cube $Q\subset\R^n$, then the
desired $A_1$ property~\eqref{e.a_1} for $w=\dist(\cdot,E)^{-\alpha}$ follows easily from \eqref{e.suff}
by considering a
ball $B=B(x,r)$ such that $x\in E$,  
$Q\subset B$ and $\lvert B\rvert\le C(n)\lvert Q\rvert$.
On the other hand, if $\dist(Q,E)\ge 2\diam(Q)$, then
an argument similar to the one leading to \eqref{e.p_03} shows that~\eqref{e.a_1} holds,
and thus $\dist(\cdot,E)^{-\alpha}\in A_1$. 

Let $\lambda>0$ with $\Mu(E)>\lambda>\alpha$,
and let $x\in E$ and $r>0$.  
Observe from inequality $\Mu(E)>0$ that  $0<h_E(B(x,2r))\le r$. 
Hence, there is $j_0\in \N$ 
such that 
\[
2^{-j_0} r  <  h_E(B(x,2r)) \le  2^{1-j_0} r.
\]
Define
\[
F_j=\{y\in B(x,r) : \dist(y,E) \le 2^{1-j}r \} \quad\text{ and }\quad
A_{j}=F_{j}\setminus F_{j+1},
\] 
for $j\ge j_0$.
Since $\lambda<\Mu(E)$, there is a constant $C_1=C_1(E,\lambda,n)$ such that
\begin{equation}\label{e.level_est}
\begin{split}
 \frac {\lvert F_{j}\rvert}{\lvert B(x,r)\rvert}
 &\le   \frac{2^n\lvert E_{2^{2-j}r}\cap B(x,2r)\rvert}{\lvert B(x,2r)\rvert} 
\\&\le C_1 \biggl(\frac {h_E(B(x,2r))}{2^{-j} r}\biggr)^{-\lambda}
= C_1 2^{-j\lambda}\biggl(\frac {h_E(B(x,2r))}{r}\biggr)^{-\lambda}.
\end{split}
\end{equation}
Since  $\lambda>0$ and  $\iol E\cap B(x,r)\subset F_j$ for every $j\ge j_0$,
by letting $j\to\infty$ we see in particular that $\lvert \iol E\cap B(x,r) \rvert = 0$. 
Here $r>0$ is arbitrary, and thus $\lvert \iol E\rvert=0$.

 If $y\in B(x,r)\setminus \iol{E}$, then
$\dist(y,E)\le \lvert y-x\rvert<r$. Hence,
\[
B(y,\dist(y,E))\subset B(x,2r)\setminus E,
\]
and therefore $0<\dist(y,E)\le h_E(B(x,2r))\le 2^{1-j_0}r$.
It follows that
the  union of  sets $A_j$ with $j\ge j_0$ covers $B(x,r)$ up to the set 
$\iol E\cap B(x,r)$,  which has measure zero.  
If $y\in A_{j}$, then
$2^{-j}r < \dist(y,E) \le  2^{1-j}r$. 
In addition, $A_{j}\subset F_j$ for every $j\ge j_0$.
 By combining the above observations and using~\eqref{e.level_est}  we obtain 
\begin{equation*}
\begin{split}
\vint_{B(x,r)} \dist(y,E)^{-\alpha}\,dy
&\le \frac{1}{\lvert B(x,r)\rvert} \sum_{j=j_0}^{\infty}\int_{A_{j}}\dist(y,E)^{-\alpha}\,dy
\le \sum_{j=j_0}^{\infty}\frac{\vert F_{j}\rvert}{\lvert B(x,r)\rvert}(2^{-j}r)^{-\alpha}\\
&\le
C_1 \sum_{j=j_0}^{\infty}(2^{-j}r)^{-\alpha} 2^{-j\lambda}\biggl(\frac {h_E(B(x,2r))}{r}\biggr)^{-\lambda}\\
&\le C_1 r^{-\alpha}\biggl(\frac {h_E(B(x,2r))}{r}\biggr)^{-\lambda}\sum_{j=j_0}^{\infty}(2^{-j})^{\lambda-\alpha}\\
&\le C(C_1,\lambda,\alpha) r^{-\alpha}\biggl(\frac {h_E(B(x,2r))}{r}\biggr)^{-\lambda}
\left(\frac{h_E(B(x,2r))}{r}\right)^{\lambda-\alpha}\\
&\le C(C_1,\lambda,\alpha) h_E(B(x,2r))^{-\alpha}\\
&\le C(C_1,\lambda,\alpha) \essinf_{y\in B(x,r)}\dist(y,E)^{-\alpha}.
\end{split}
\end{equation*}
This shows that \eqref{e.suff} holds, and the claim follows. 
\end{proof}

Recall that Theorem~\ref{t.A_1_char} states, for a nonempty set $E\subset \R^n$ and $\alpha\neq 0$, that
$\dist(\cdot,E)^{-\alpha}\in A_1$ if and only if $0<\alpha<\Mu(E)$. We are now ready to prove this.

\begin{proof}[Proof of Theorem~\ref{t.A_1_char}]
If $0<\alpha<\Mu(E)$, then $\dist(\cdot,E)^{-\alpha}\in A_1$ by Theorem~\ref{t.aikawa_assouad2}.
Conversely, assume that $\dist(\cdot,E)^{-\alpha}\in A_1$.
Since $\alpha\neq 0$ by assumption, Lemma~\ref{l.aikawa_assouad1} implies that $\alpha>0$.
By the self-improvement of $A_1$ weights (see \cite[pp.~399--400]{GGRF1985}), 
there exists $s>1$ such that $\dist(\cdot,E)^{-s\alpha}\in A_1$. Thus 
we obtain from Lemma~\ref{l.aikawa_assouad1} that $0<\alpha<s\alpha\le\Mu(E)$.
\end{proof}

Since $\dist(\cdot,E)^{0} \in A_1$ holds for all (nonempty) sets $E\subset\R^n$
(under the interpretation that $0^0=1$), Theorem~\ref{t.A_1_char}
implies that 
\[
\Mu(E)=\sup\{ \alpha \ge 0 : \dist(\cdot,E)^{-\alpha} \in A_1 \}
\]
for all nonempty sets $E\subset\R^n$. On the other hand, 
by Theorem~\ref{thm.main_intro} we have $\dist(\cdot,E)^{-\alpha}\in A_1$, for some $\alpha>0$, if and only if
$E$ is weakly porous. This, together with Theorem~\ref{t.A_1_char}, gives the following corollary.

\begin{corollary}\label{c.wp_and_mu}
A nonempty set $E\subset \R^n$ is weakly porous
if and only if $\Mu(E)>0$.
\end{corollary}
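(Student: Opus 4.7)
The plan is to obtain Corollary~\ref{c.wp_and_mu} as an essentially immediate consequence of combining Theorem~\ref{thm.main_intro} with Theorem~\ref{t.aikawa_assouad}; no new arguments should be needed. The role of the Muckenhoupt exponent $\Mu(E)$ is precisely to quantify the strict positivity that $A_1$-admissible exponents $\alpha>0$ force on the set, so the equivalence is a structural consequence of the two characterizations already established.

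For the forward implication, I would assume that $E$ is weakly porous. Theorem~\ref{thm.main_intro} then produces some $\alpha>0$ with $\dist(\cdot,E)^{-\alpha}\in A_1$. Applying Theorem~\ref{t.aikawa_assouad} to this $\alpha$ gives $\Mu(E)>\alpha>0$, as desired.

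For the converse, I would assume $\Mu(E)>0$ and pick any $\alpha$ with $0<\alpha<\Mu(E)$. By Theorem~\ref{t.aikawa_assouad} (in the form of the sufficiency, which is Theorem~\ref{t.aikawa_assouad2}), we have $\dist(\cdot,E)^{-\alpha}\in A_1$. Then Theorem~\ref{thm.main_intro}, applied in the direction that an $A_1$ distance weight forces weak porosity (i.e.\ Lemma~\ref{lemma.necessity}), yields that $E$ is weakly porous.

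There is no serious obstacle here: the only mild care needed is to invoke Theorem~\ref{t.aikawa_assouad} with a \emph{strict} inequality $\Mu(E)>\alpha$ rather than $\Mu(E)\ge\alpha$, which is why the forward step produces $\Mu(E)>\alpha>0$ rather than the a priori weaker $\Mu(E)\ge\alpha$. Aside from this bookkeeping, the corollary follows formally from the two main theorems.
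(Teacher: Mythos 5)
Your proposal is correct and follows essentially the same route as the paper: the corollary is obtained by combining Theorem~\ref{thm.main_intro} with Theorem~\ref{t.aikawa_assouad} (equivalently, Theorem~\ref{t.aikawa_assouad2} and Lemma~\ref{lemma.necessity} in the directions you use them), exactly as the paper does. Your remark about needing the strict inequality $\Mu(E)>\alpha$ matches the paper's formulation of Theorem~\ref{t.aikawa_assouad}, so no gap remains.
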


Using Theorem~\ref{t.A_1_char} and Corollary~\ref{c.wp_and_mu}, we can prove 
Theorem~\ref{t.A_p_char}, as follows.

\begin{proof}[Proof of Theorem~\ref{t.A_p_char}]
Since $E$ is weakly porous, 
we have $\Mu(E)>0$ by Corollary~\ref{c.wp_and_mu}. 
Therefore, the equivalences in both (i) and (ii) hold if $\alpha=0,$ 
and so we may assume from now on that $\alpha \neq 0.$ 
In this case the claim in (i) follows 
directly from Theorem~\ref{t.A_1_char}.

In part (ii), let $1<p<\infty$ and
assume first that $w\in A_p$. Because $E$ is weakly  porous, Lemma~\ref{lemma.sufficiency} 
provides us with some $\sigma >0$ for which $\dist(\cdot,E)^{-\sigma} \in A_1(\R^n)$.  If  $\alpha > 0,$ we can use 
Lemma~\ref{lem.exponentA1Ap}  with $\beta=\sigma/\alpha$ to deduce that $w=\dist(\cdot, E)^{-\alpha}\in A_1.$ 
Then Theorem~\ref{t.A_1_char} implies $\Mu(E) >\alpha,$ and so \eqref{e.a_p_muckexponent} holds. 
On the other hand, if $\alpha<0,$ then 
we have 
\[
\dist(\cdot,E)^{- \left(\frac{-\alpha}{p-1}\right) }= w^{1-p'} \in A_{p'},
\]
where $\frac{-\alpha}{p-1} >0.$ 
Hence the previous case,  for a positive power  and the class $A_{p'}$, 
shows that
\begin{equation}\label{e.a_p_muckexponentdual}
(1-p')\Mu(E) < 0 < \frac{-\alpha}{p-1} < \Mu(E),
\end{equation}
which is equivalent to \eqref{e.a_p_muckexponent}.

Conversely, assume that \eqref{e.a_p_muckexponent} holds for some $\alpha\ne 0.$ If $\alpha > 0,$ 
then $w=\dist(\cdot,E)^{-\alpha} \in A_1\subset A_p$ by Theorem~\ref{t.A_1_char}. 
Finally, if $\alpha <0,$ we observe that \eqref{e.a_p_muckexponent} is equivalent to \eqref{e.a_p_muckexponentdual}, 
where $\frac{-\alpha}{p-1} >0$.  Thus we may  apply the preceding case for the exponent $\frac{-\alpha}{p-1}>0$ and the class $A_{p'}$ to conclude that $\dist(\cdot, E)^{\alpha/(p-1)} \in A_{p'}.$ Hence $w = \dist(\cdot,E)^{-\alpha} \in A_p,$
proving part (ii). 
\end{proof}

\begin{remark}\label{r.notnec}
Note that in part (i) of Theorem~\ref{t.A_p_char} the explicit assumption that 
$E$ is weakly porous is needed in the necessity part, since for $\alpha=0$
the claim $w\in A_1$ holds for all (nonempty) sets $E\subset\R^n$. However, if $\alpha>0$,
then we know  by Theorem~\ref{thm.main_intro}  
that $w\in A_1$ can  only hold  if $E$ is weakly porous,
which in turn is equivalent to  $\Mu(E)>0$. 

In part (ii) the case $\alpha=0$ again shows that~\eqref{e.a_p_muckexponent} is
not necessary for $w\in A_p$, for general sets $E\subset\R^n$. 
Moreover,  if we do not assume weak porosity of $E$, then even
in the case $\alpha\neq 0$ the requirement~\eqref{e.a_p_muckexponent}
is not necessary for $w\in A_p$. This follows from   Theorem~\ref{examplestatements}, 
which gives
a set $E\subset\R$ with $\Mu(E)=0$,
i.e.\ $E$ is not weakly porous, such that
$\dist(\cdot,E)^{-\alpha}\in A_p$ for all $0<\alpha<1$ and all $1<p<\infty$. 
\end{remark}

\begin{remark}\label{r.minkowski}
When $E\subset\R^n$ is a bounded set, the
\emph{upper Minkowski} (or \emph{box}) \emph{dimension}
$\udimm(E)$
is the infimum of all $\lambda\ge 0$ for which there is a constant $C$ such that
\begin{equation}\label{e.udimm}
 N(E,r)\le Cr^{-\lambda}
\end{equation}
for every $0<r<\diam(E)$.
Note that \eqref{e.udimm} 
is equivalent to the condition that there is a constant $C$ such that
$\lvert E_r\rvert \le  Cr^{n-\lambda}$
for every $0<r<\diam(E)$; this follows from Lemma~\ref{l.balls_and_meas}.

If a set $E\subset\R^n$ is weakly porous and $0<\alpha<\Mu(E)$,
then $\dist(\cdot,E)^{-\alpha}\in A_1$ by Theorem~\ref{t.A_1_char},
and so $\int_{B(x,R)} \dist(y,E)^{-\alpha}\,dy<\infty$
for every $x\in E$ and $R>0$. Hence, if $x\in E$ and $R>0$, then
it holds for all $0<r<\diam(E\cap B(x,R))\le 2R$ that
\begin{align*}
\lvert (E\cap B(x,R))_r\rvert 
\le r^{\alpha}\int_{B(x,3R)} \dist(y,E)^{-\alpha}\,dy\le C(x,R,E)r^{n-(n-\alpha)}.
\end{align*}
Thus 
\[
\udimm(E\cap B(x,R))\le n-\alpha<n.
\]
Since this holds for all $0<\alpha<\Mu(E)$,
we obtain $\udimm(E\cap B(x,R))\le n-\Mu(E)$.
In particular, if $E\subset\R^n$ is bounded, then
$0\le \Mu(E)\le n - \udimm(E)$.

On the other hand, the condition that $\udimm(E\cap B(x,R))\le c < n$ for every $x\in E$ and $R>0$ is not sufficient
for the weak porosity of $E$. For instance, if $E\subset\Z\subset \R$ is not weakly porous
(e.g.\ $E=\N$), then we have $\udimm(E\cap B(x,R)) = 0 < 1 = n$
for every $x\in E$ and $R>0$ since $E\cap B(x,R)$ is a finite set.

See also~\cite{MR2265777} and the references therein for much more elaborate
connections between Minkowski dimensions and the integrability of distance functions.
\end{remark}

\section{Example of a weakly porous set}\label{s.circular}

The notions of weak porosity and Muckenhoupt exponent are interesting only if there are
(plenty of) weakly porous sets which are not porous. Below we construct a family of such sets in $\R^n$
and determine the Muckenhoupt exponents for different values of the parameter $\gamma>0$. 
These sets are inspired by the often used one-dimensional
example $\{j^{-\gamma}:j\in\N\}\cup \lbrace 0 \rbrace\subset\R$. 
For instance, in~\cite[Section 6]{MR3783415}
such sets were applied to illustrate the so-called
Assouad spectrum.

\begin{theorem}\label{ex.circular}
Let $n\in\N$ and $\gamma>0$. Then 
the set 
\[E=\bigcup_{j = 1} ^\infty \partial B\big(0, j^{-\gamma} \big) \cup \lbrace 0 \rbrace \subset \R^n
\]
is weakly porous with $\Mu(E)=\min\lbrace 1, \frac{n \gamma}{1+\gamma} \rbrace.$ 
\end{theorem}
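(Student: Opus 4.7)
The plan is to compute the Muckenhoupt exponent $\Mu(E)$ directly from Definition~\ref{d.udima_def}; weak porosity of $E$ then follows from Corollary~\ref{c.wp_and_mu} as long as $\Mu(E)>0$. I expect the origin to be the critical point: away from $0$, any $x\in E$ lies on a single sphere and locally the geometry is the tame $(n-1)$-dimensional one of a porous set, while near $0$ infinitely many spheres accumulate and the worst-case exponent is attained.

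First I would analyze $h_E(B(0,R))$ for small $R>0$. Writing $j_R=\lceil R^{-1/\gamma}\rceil$ for the smallest index with $j_R^{-\gamma}\le R$, the ball $B(0,R)$ contains the spheres $\partial B(0,j^{-\gamma})$ for $j\ge j_R$, consecutive pairs of which bound annular gaps of width $\asymp \gamma j^{-\gamma-1}$. The widest such gap sits at $j=j_R$, giving $h_E(B(0,R))\asymp R^{1+1/\gamma}$. Then, for $0<r<h_E(B(0,R))$, I would decompose $E_r\cap B(0,R)$ into disjoint tubular shells $\{y:\lvert \lvert y\rvert -j^{-\gamma}\rvert <r\}$ for $j_R\le j\le J$ together with a fused central ball of radius $\asymp r^{\gamma/(1+\gamma)}$ around $0$, where $J\asymp r^{-1/(1+\gamma)}$ is the threshold at which consecutive shells overlap. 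Using $\lvert\{y:\lvert\lvert y\rvert -j^{-\gamma}\rvert<r\}\rvert\asymp r\,j^{-\gamma(n-1)}$, this would yield
\[
\lvert E_r\cap B(0,R)\rvert\asymp r\sum_{j=j_R}^{J} j^{-\gamma(n-1)}+r^{n\gamma/(1+\gamma)}.
\]

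The next step is a case analysis on $\gamma(n-1)$. If $\gamma(n-1)<1$ (always the case for $n=1$), the sum is $\asymp J^{1-\gamma(n-1)}$ and both terms above coincide up to constants with $r^{n\gamma/(1+\gamma)}$, producing a ratio of order $(r/R^{1+1/\gamma})^{n\gamma/(1+\gamma)}$ and thus exponent $n\gamma/(1+\gamma)$. If $\gamma(n-1)\ge 1$, the sum is dominated by $j_R^{1-\gamma(n-1)}\asymp R^{n-1-1/\gamma}$ (up to a logarithm at the threshold), yielding a ratio of order $r/R^{1+1/\gamma}$ and exponent $1$. Together these produce $\Mu(E)=\min\{1,n\gamma/(1+\gamma)\}$ at the origin, in both directions, since the estimates are two-sided. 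Finally, for $x\in\partial B(0,k^{-\gamma})$ a short check shows the exponent is $\ge 1$ when $B(x,R)$ meets only the single sphere through $x$, while for larger $R$ the ball $B(x,R)$ is sandwiched between concentric balls about $0$ of comparable radius, and the origin analysis applies up to fixed constants. Combining the two bounds with Corollary~\ref{c.wp_and_mu} yields both the weak porosity of $E$ and the claimed value of $\Mu(E)$. The \emph{main obstacle} is executing the case analysis for the sum $\sum_j j^{-\gamma(n-1)}$ cleanly across the three regimes $\gamma(n-1)<,=,>1$ (in particular absorbing the logarithmic correction at the threshold into arbitrary $\alpha<1$), along with verifying that no $x\neq 0$ gives a worse exponent than the origin.
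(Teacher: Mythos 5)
Your analysis of origin-centered balls is sound and matches the heart of the paper's argument (it is essentially the paper's case \textbf{(ii)} together with the final lower-bound computation on $B(0,1)$); note in passing that it even yields the upper bound $\Mu(E)\le 1$ in the regime $\gamma(n-1)\ge 1$, whereas the paper gets $\Mu(E)\le 1$ from the single-sphere Lemma~\ref{lemmaonecircle}. The genuine gap is in your treatment of centers $x\ne 0$. Since $\Mu(E)\ge\alpha$ requires verifying \eqref{e.muckdim} for \emph{every} ball $B(x,R)$ with $x\in E$ and $0<r<h_E(B(x,R))$, you must cover all scales $R$, and your dichotomy ``$B(x,R)$ meets only the sphere through $x$'' versus ``$B(x,R)$ is sandwiched between origin-centered balls of comparable radius'' is false: for $x\in\partial B(0,k^{-\gamma})$ and $k^{-\gamma-1}\lesssim R\ll k^{-\gamma}$ the ball meets on the order of $Rk^{1+\gamma}$ spheres (so not just one), yet $\dist(0,B)\gg\diam(B)$, so the smallest origin-centered ball containing $B(x,R)$ has radius $\asymp k^{-\gamma}\gg R$. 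Transferring the origin estimate through that containment destroys the normalization: $|B(x,R)|$ is smaller than $|B(0,2k^{-\gamma})|$ by the unbounded factor $(k^{-\gamma}/R)^n$, so ``the origin analysis applies up to fixed constants'' does not hold there.

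This intermediate regime is exactly the paper's case \textbf{(i)} ($\dist(\{0\},B)>\diam(B)$), and it needs its own (short but unavoidable) estimate: the spheres met by $B$ have comparable radii, so $\lvert E_r\cap B\rvert\lesssim (\#\text{spheres met})\,rR^{n-1}$, while $h_E(B)\lesssim k^{-\gamma-1}$ and $R\gtrsim(\#\text{spheres met})\,k^{-\gamma-1}$, which combine to give $\bigl(h_E(B)/r\bigr)^{\alpha}\,\lvert E_r\cap B\rvert/\lvert B\rvert\lesssim (rk^{1+\gamma})^{1-\alpha}\le C$ precisely because $\alpha\le 1$ and $r<h_E(B)$; this is where the constraint $\alpha\le 1$ genuinely enters the sufficiency direction, not only through the single-sphere case. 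A complete write-up must also dispose of balls not contained in $\iol{B}(0,1)$ (centers on the outermost sphere, or large $R$, where $E_r$ is contained in $\iol B(0,1+r)$), which is easy but not covered by your two cases. With that third regime added, your plan becomes essentially the paper's proof; without it, the lower bound $\Mu(E)\ge\min\{1,n\gamma/(1+\gamma)\}$ is not established.
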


The origin is included in $E$ in order to have a compact set,
but for our purposes this does not make any essential difference. 
See Figure~\ref{f.setEgamma} for an illustration of 
 the set 
$E$. 

\begin{figure}[!ht]
\begin{center}
\begin{tikzpicture} 
\foreach \x in {1,...,9} {
  \draw [line width=0.5] (0,0) circle (4/\x^(0.7););
  }
\foreach \x in {10,...,100} {
  \draw [line width=5*\x^(-1)] (0,0) circle (4/\x^(0.7););
  }
\draw [color=black, fill=black] (0,0) circle (4/100^(0.7););

\end{tikzpicture} 
\caption{The set $E$, with $n=2$ and $\gamma=0.7$}
\label{f.setEgamma}

\end{center}
\end{figure}
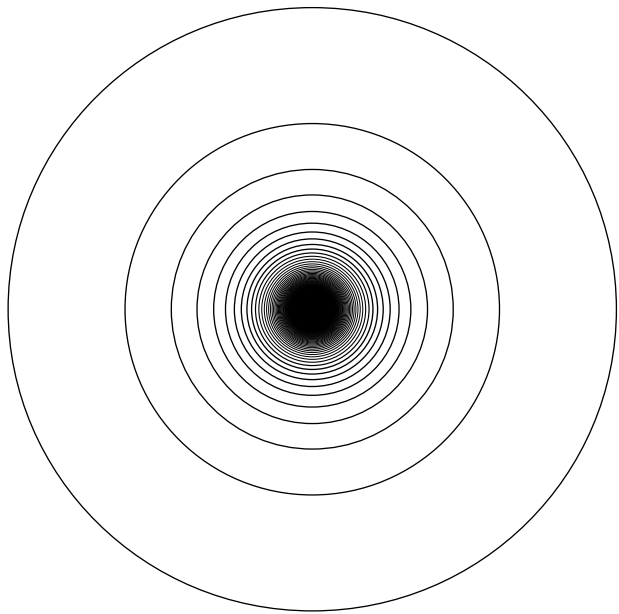

By considering the balls $B(0,j^{-\gamma})$ as $j\to \infty$, it is 
straightforward to verify that $E$ is not porous,
and hence $\dima(E)=n$. Moreover, 
special cases of the 
computations in the proof of
Theorem~\ref{ex.circular}  below
can be used to show that $\udimm(E)=\max\{n-1,\frac{n}{1+\gamma}\}$,
and so in combination with Theorem~\ref{ex.circular}
we obtain for the set $E$ the identity $\Mu(E) = n - \udimm(E)$;
compare to Remark~\ref{r.minkowski}.

For the proof of Theorem~\ref{ex.circular}, we 
define $S^t=\partial B(0,t)$ and $A_s^t=\iol{B} \left( 0, t \right) \setminus B\left(0, s \right)$ for every $0\leq s \leq t$, where we use the notation $B(0,0)=\emptyset$.
We begin with the following lemma.

\begin{lemma}\label{lemmaonecircle}
Let $B=B(x,R) \subset \R^n$ be a ball such that $x \in S^t,$ with $t=j^{-\gamma}$ for some $j\in \N$, and $B \cap E = B \cap S^t.$ Then \eqref{e.muckdim} holds for $B$ if and only if $\alpha \leq 1$. Moreover, if $\alpha\le 1$, then
the constant in \eqref{e.muckdim} for $B$
can be chosen to depend on $n$, $\gamma$ and $\alpha$ only.
\end{lemma}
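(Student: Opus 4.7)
The plan is to reduce \eqref{e.muckdim} to an elementary one-variable inequality by computing $h_E(B)$ and the measure $|E_r\cap B|$ explicitly for this specific ball.

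First, I will show $h_E(B(x,R))=R/2$. Since $B\cap E=B\cap S^t$, any ball $B(y,s)\subset B\setminus E$ satisfies $s\le||y|-t|$ and $s\le R-|y-x|$. The triangle inequality $||y|-t|=||y|-|x||\le|y-x|$ then forces $s\le R/2$, with equality attained at $y=x+(R/2)x/|x|$. The hypothesis $B\cap E=B\cap S^t$ together with $0\in E$ also forces $0\notin B$, hence $R\le t$, which will be used below.

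Next, I will estimate $|(S^t)_r\cap B|$ via spherical coordinates centered at the origin, aligned with $x=te_1$. Writing $y=\rho\omega$ with $\omega\in S^{n-1}$ at angle $\theta$ from $e_1$, the condition $y\in B(x,R)$ together with $|\rho-t|<r<R/2\le t/2$ forces $\theta\lesssim R/t$, and integration yields the two-sided bound $|(S^t)_r\cap B|\asymp rR^{n-1}$, with constants depending only on $n$. To pass from $(S^t)_r\cap B$ to $E_r\cap B$, observe that any $y\in(E_r\cap B)\setminus(S^t)_r$ has its nearest point of $E$ outside $B$, forcing $|y-x|>R-r$; this collar has measure at most $C(n)rR^{n-1}$. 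Combining, $|E_r\cap B|/|B|\asymp r/R$.

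Finally, since $h_E(B)/r=R/(2r)$, the inequality \eqref{e.muckdim} becomes $(r/R)^{1-\alpha}\le C'$ for every $r\in(0,R/2)$. For $\alpha\le 1$ and $r/R<1/2$ this holds with $(r/R)^{1-\alpha}\le 1$, giving the sufficiency with a constant depending only on $n$ and $\alpha$. For $\alpha>1$, letting $r\to 0^+$ in the lower bound $|E_r\cap B|/|B|\gtrsim r/R$ produces an unbounded left-hand side, contradicting any finite $C'$ and yielding the necessity. The main obstacle in this plan will be the clean two-sided spherical-cap estimate of the second step; the collar argument and the concluding one-variable inequality are routine once that bound is established.
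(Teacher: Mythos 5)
Your proposal is correct and follows essentially the same route as the paper: both identify $h_E(B)=R/2$, establish the two-sided bound $\lvert E_r\cap B\rvert \asymp rR^{n-1}$, and reduce \eqref{e.muckdim} to the boundedness of $(r/R)^{1-\alpha}$, which holds precisely when $\alpha\le 1$, with the lower bound giving failure for $\alpha>1$ as $r\to 0$. The only minor differences are that you prove $h_E(B)=R/2$ and $R\le t$ explicitly, use polar coordinates instead of the paper's spherical-cap/law-of-cosines computation, and absorb the contribution of the neighboring spheres' $r$-neighborhoods via the boundary collar $\{R-r<\lvert y-x\rvert<R\}$ rather than comparing their annuli directly -- all sound, and in fact yielding constants independent of $\gamma$.
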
 

\begin{proof}
We have $h_E(B) = R/2,$ and given $0<r<h_E(B),$ the set $A_{j^{-\gamma}-r}^{j^{-\gamma}+r} \cap B$ satisfies
\begin{equation}\label{exfirstcasefirstequation}
(2r) \inf_{b\in [t-r,t+r]} \mathcal{H}^{n-1}\left( S^{b} \cap B \right)  \leq  \lvert A_{j^{-\gamma}-r}^{j^{-\gamma}+r} \cap B \rvert \leq (2r) \sup_{b\in [t-r,t+r]} \mathcal{H}^{n-1}\left( S^{b} \cap B \right),
\end{equation}
where $\mathcal{H}^{n-1}$ is the normalized
Hausdorff measure in $\R^n$. 
For each $b\in[t-r,t+r],$ the set $S^b \cap B$ is a hyperspherical cap within the sphere $S^{b},$ whose angle $\alpha_{b}$ satisfies, by virtue of the law of cosines, that $\cos(\alpha_b) = \frac{b^2+t^2-R^2}{2bt}.$ Therefore
\[
\sin\left( \frac{\alpha_b}{2} \right) = \left( \frac{R^2-(b-t)^2}{4bt} \right)^{1/2}. 
\] 
For a sufficiently small constant $c(\gamma),$ we have that $r \leq c(\gamma) h_E(B)$ implies $\alpha_b \simeq C(\gamma) \left( \frac{R }{2t} \right)$ for every $b\in [t-r,t+r]$;
here and below $a\simeq C(*) b$ means that $C(*)^{-1}b\le a\le C(*)b$. This leads us to 
\begin{equation}\label{exfirstcasesecondequation}
\mathcal{H}^{n-1}\big( S^{b} \cap B \big)  \simeq C(n,\gamma) b^{n-1} \left( \alpha_b \right)^{n-1}  \simeq C(n,\gamma) b^{n-1} \left( \frac{R}{2t} \right)^{ n-1 } \simeq C(n,\gamma) R^{n-1},
\end{equation}
for every $b\in [t-r,t+r].$ The sets $A_{(j-1)^{-\gamma}-r}^{(j-1)^{-\gamma}} \cap B$ and $A_{(j+1)^{-\gamma}}^{(j+1)^{-\gamma}+r} \cap B$ (meaning $A_{(j-1)^{-\gamma}-r}^{(j-1)^{-\gamma}} = \emptyset$ in the case $j=1$) are also contained in $E_r \cap B$, but their measures are controlled by $C(n,\gamma) \lvert A_{j^{-\gamma}-r}^{j^{-\gamma}+r} \cap B \rvert.$ Bearing in mind this observation and \eqref{exfirstcasefirstequation} and \eqref{exfirstcasesecondequation}, we obtain
\[
\left( \frac{h_E(B)}{r} \right)^{\alpha}  \frac{\lvert E_r \cap B \rvert}{\lvert B \rvert} \leq C(n,\gamma,\alpha) R^{\alpha-n} r^{-\alpha} \lvert A_{j^{-\gamma}-r}^{j^{-\gamma}+r} \cap B \rvert \leq C(n,\gamma,\alpha) \left( \frac{r}{R} \right)^{1-\alpha}.
\]
If $\alpha \leq 1,$ the last term is bounded by $C(n,\gamma,\alpha).$ On the other hand, if $\alpha > 1,$  then \eqref{exfirstcasefirstequation} and \eqref{exfirstcasesecondequation} yield
\[
\left( \frac{h_E(B)}{r} \right)^{\alpha}  \frac{\lvert E_r \cap B \rvert}{\lvert B \rvert} \geq c(n,\gamma,\alpha) R^{\alpha-n} r^{1-\alpha} R^{n-1}  \geq c(n,\gamma,\alpha) \left( \frac{r}{R} \right)^{1-\alpha},
\]
and the last term tends to infinity as $r \to 0.$ 
\end{proof}

\begin{proof}[Proof of Theorem \ref{ex.circular}] 
First we show that \eqref{e.muckdim} holds for every $\alpha$ with $0<\alpha<\min\lbrace 1, \frac{n\gamma}{1+\gamma} \rbrace.$ 
This implies that 
$\Mu(E) \geq  \min\lbrace 1, \frac{n \gamma}{1+\gamma} \rbrace>0$,
and thus $E$ is weakly porous, by Corollary~\ref{c.wp_and_mu}.

Fix $0<\alpha<\min\lbrace 1, \frac{n\gamma}{1+\gamma} \rbrace$ and let
$B=B(x,R) \subset \R^n$ be a ball with $x\in E,$ and let $0<r<h_E(B)$. 
We suppose first that $B$ is contained in $\iol{B}(0,1).$ Let $k$ be the largest number in $\N$ and $N$ be the smallest number in $\N \cup \lbrace \infty \rbrace$  such that $B \subset \iol{B} \left( 0, k^{-\gamma} \right) \setminus B\left(0, N^{-\gamma} \right).$ We interpret $N^{-\gamma}=0$ and $B\left(0, N^{-\gamma} \right) =\emptyset$ when $0 \in \iol{B}.$ It is clear that $N \geq k+2$, since the center $x$ of $B$ belongs to $E$. In the case 
$N= k+2$ we have $x\in S^{(k+1)^{-\gamma}}$, 
and \eqref{e.muckdim} follows immediately from Lemma~\ref{lemmaonecircle}. 
Hence we may assume that $N \geq k+3.$ Also, observe that
\begin{equation}\label{basicestimatesradiushole1}h_E(B) \leq \tfrac{1}{2}\left( k^{-\gamma}-(k+1)^{-\gamma} \right) \leq \tfrac{\gamma}{2} k^{-\gamma-1}
\end{equation} 
and
\begin{equation}\label{basicestimatesradiushole2}
R \geq \tfrac{1}{2} \left( (k+1)^{-\gamma}-(N-1)^{-\gamma} \right)
\ge \tfrac{\gamma}{2}(N-k-2)(N-1)^{-\gamma-1}.
\end{equation} 
Now we study two cases. 

\smallskip

\textbf{\bf(i)} Suppose $\dist(\lbrace 0 \rbrace, B ) > \diam(B).$ We have the estimates 
\[
(k+1)^{-\gamma} \leq \sup_{x\in B} \lvert x\rvert \leq  \dist(\lbrace 0 \rbrace, B ) + \diam(B) 
\leq 2 \dist(\lbrace 0 \rbrace, B ) \leq 2 (N-1)^{-\gamma} ,
\]
and so $N-1\leq C(\gamma) (k+1).$ Then we have
\begin{align*}
\lvert E_r \cap B \rvert & \leq \sum_{j=k}^N \lvert A_{j^{-\gamma}-r}^{j^{-\gamma}+r} \cap B \rvert 
\leq C(n) \sum_{j=k}^N r R^{n-1} \leq C(n) (N-k+1) r R^{n-1}.
\end{align*}
The previous observation, together with
\eqref{basicestimatesradiushole1} and \eqref{basicestimatesradiushole2}, leads us to
\begin{align*}
\left( \frac{h_E(B)}{r} \right)^{\alpha}  \frac{\lvert E_r \cap B \rvert}{\lvert B \rvert} & \leq C(n,\gamma) k^{-(1+\gamma) \alpha} \, r^{1-\alpha} R^{-1}(N-k+1)  \\
&\le C(n,\gamma) k^{-(1+\gamma) \alpha} \, r^{1-\alpha}(N-1)^{1+\gamma}\\
&\le C(n,\gamma) k^{-(1+\gamma) \alpha} \, r^{1-\alpha}(k+1)^{1+\gamma}\\
& \leq C(n,\gamma)  \left(rk^{1+\gamma}\right)^{1-\alpha}.
\end{align*} 
The last term is bounded by a constant $C(n, \gamma,\alpha)$ because $\alpha \leq 1$ and $r   \leq C(\gamma) k^{-1-\gamma}.$

\smallskip

{\bf (ii)} Now suppose $\dist(\lbrace 0 \rbrace, B ) \leq  \diam(B).$ Then we have 
\[(2k)^{-\gamma} \leq (k+1)^{-\gamma} \leq  \dist(\lbrace 0 \rbrace, B ) + \diam(B) \leq 2 \diam(B),\] 
and hence $k^{-\gamma} \leq 2^{1+\gamma} \diam(B).$ Given $0<r<h_E(B),$ denote by $j_0\in \N$ the
smallest number for which
\[
2r \geq j_0^{-\gamma}-\left( j_0+1 \right)^{-\gamma} \ge C(\gamma)(j_0+1)^{-\gamma-1}.
\]
Notice that $k< j_0$ and, by the definition of $j_0,$ we also have 
\[
r \leq 
(j_0-1)^{-\gamma}- j_0^{-\gamma}\le  
C(\gamma) (j_0-1)^{-\gamma-1}\leq C(\gamma)j_0^{-\gamma-1}.
\] 
 This observation permits us to write
\begin{align*}
\lvert E_r \cap B \rvert & \le \lvert B \cap A_{k^{-\gamma} -r}^{k^{-\gamma}} \lvert + \lvert B(0,j_0^{-\gamma}+r)  \cap B \rvert + \sum_{j=k+1}^{j_0-1} \lvert  A_{j^{-\gamma}-r}^{j^{-\gamma}+r} \cap B \rvert \\
& \leq C(n) \biggl( \left( j_0^{-\gamma}+r \right)^n + \sum_{j=k }^{j_0-1} r \left( j^{-\gamma}+r \right)^{n-1} \biggr) .
\end{align*}
Using the inequalities $0<\alpha<\min\lbrace 1, \frac{n\gamma}{1+\gamma} \rbrace,$ $k^{-\gamma} \leq C(\gamma) R$, $c(\gamma)j_0^{-1-\gamma}\leq r \leq C(\gamma)j_0^{-1-\gamma},$ and $ h_E(B) \leq C(\gamma) k^{-1-\gamma},$ we obtain
\begin{align*}
\left( \frac{h_E(B)}{r} \right)^{\alpha}   \frac{\lvert E_r \cap B \rvert}{\lvert B \rvert} & \leq C(n,\gamma) k^{n \gamma-(1+\gamma)\alpha} r^{-\alpha}\biggl( \left( j_0^{-\gamma}+r \right)^n +  \sum_{j=k }^{j_0-1}  r \left( j^{-\gamma}+r \right)^{n-1} \biggr) \\
& \leq C(n,\gamma) k^{n \gamma-(1+\gamma)\alpha} r^{-\alpha} \biggl( j_0^{-n\gamma} + \sum_{j=k }^{j_0-1}  r \left( j^{-\gamma}+r \right)^{n-1}  \biggr) \\
& \leq C(n,\gamma) \biggl( \left( k j_0^{-1} \right)^{n\gamma-(1+\gamma) \alpha} + k^{n \gamma-(1+\gamma)\alpha} \sum_{j=k }^{j_0-1}  r^{1-\alpha} \left( j^{-\gamma}+r \right)^{n-1}  \biggr) \\
& \leq C(n,\gamma)+ C(n,\gamma) k^{n \gamma-(1+\gamma)\alpha} \sum_{j=k }^{j_0-1}  j^{-(1-\alpha)(1+\gamma)} \left( j^{-\gamma}+j^{-1-\gamma}\right)^{n-1} \\
& \leq C(n,\gamma)+ C(n,\gamma) k^{n \gamma-(1+\gamma)\alpha} \sum_{j=k }^{\infty} j^{-1-n\gamma+(1+\gamma) \alpha} \leq C(n,\gamma,\alpha),
\end{align*}
where the last inequality follows
by comparing the series to 
$\int_{k}^\infty t^{-1-n\gamma+(1+\gamma) \alpha}\,dt$, bearing in mind that $\alpha < \frac{n\gamma}{1+\gamma}.$ 
The cases {\bf (i)} and {\bf (ii)} together show that  
\eqref{e.muckdim} holds when $B\subset \iol{B}(0,1)$.

Now suppose that $B=B(x,R)$ is not contained in $\iol{B}(0,1).$
In the case $r\ge \frac{1-2^{-\gamma}}{2}$ we use the fact that $n-\alpha> 0$ to estimate
\begin{align*}
\left( \frac{h_E(B)}{r} \right)^{\alpha}   \frac{\lvert E_r \cap B \rvert}{\lvert B \rvert}
& \leq C(n)\lvert E_r \rvert r^{-\alpha} R^{\alpha-n}
\le C(n)\lvert \iol{B}(0,r+1)\rvert r^{-\alpha} R^{\alpha-n}
\\&\le C(n,\gamma)\left(\frac{r}{R}\right)^{n-\alpha}\le
C(n,\gamma,\alpha).
\end{align*} 
In the sequel, we will assume that
$r< \frac{1-2^{-\gamma}}{2}$.

If $x\in E \setminus S^1,$ then $R\ge h_E(B) \geq c(\gamma) R \geq c(\gamma)$
and 
\[\lvert E_r \cap B \rvert \le \lvert E_r \cap B(0,1)\rvert + \lvert E_r \setminus B(0,1)\rvert \le \lvert E_r \cap B(0,1)\rvert+C(n)r.
\]
Therefore 
\[
\left( \frac{h_E(B)}{r} \right)^{\alpha}   \frac{\lvert E_r \cap B \rvert}{\lvert B \rvert} 
\leq C(n)(r+\lvert E_r \cap B(0,1)\rvert) r^{-\alpha} R^{\alpha-n}   
\leq C(n,\gamma,\alpha) R^{\alpha-n} \leq C(n,\gamma,\alpha),
\]
where the second inequality follows 
by using the above case \textbf{(ii)} with $B=B(0,1)$. 
If $x\in S^1$ and $R\geq  \frac{1-2^{-\gamma}}{2},$  then we can repeat the preceding argument to show  that \eqref{e.muckdim} holds, and finally, if $x\in S^1$ and $R<\frac{1-2^{-\gamma}}{2},$ 
 then \eqref{e.muckdim} holds by  Lemma~\ref{lemmaonecircle}.

Next we show that $\Mu(E) \leq \min \lbrace 1, \frac{n \gamma}{1+\gamma} \rbrace.$ 
The bound $\Mu(E) \leq 1$ follows from Lemma~\ref{lemmaonecircle}. 
Let $\alpha > \frac{n \gamma}{1+\gamma}$ and  consider the ball $B=B(0,1).$ 
Then $h_E(B)=\frac{1-2^{-\gamma}}{2}=C(\gamma).$ Given $0<r<\frac{1}{100}$, let $j_0\in \N$ be the 
smallest number for which $2r \geq j_0^{-\gamma}-(j_0+1)^{-\gamma}.$ Then $r$ is comparable to $c(\gamma)j_0^{-1-\gamma}$ and  the annuli $\lbrace A_{j^{-\gamma}-r}^{j^{-\gamma}+r} \rbrace_{j=1}^{j_0}$ are pairwise disjoint. 
For sufficiently small $r,$ we thus have
\begin{align*}
\left( \frac{h_E(B)}{r} \right)^{\alpha}  \frac{\lvert E_r \cap B \rvert}{\lvert B \rvert} & 
\geq c(n,\gamma,\alpha) r^{-\alpha}\sum_{j=2}^{j_0-1} \left( \left( j^{-\gamma}+r \right)^{n}- \left( j^{-\gamma}-r \right)^{n} \right) \\
& \geq c(n,\gamma,\alpha) r^{1-\alpha} \sum_{j=2}^{j_0-1} \left( j^{-\gamma}-r \right)^{n-1} \\
& \geq c(n,\gamma,\alpha) r^{1-\alpha} j_0 \left( (j_0-1)^{-\gamma}-r \right)^{n-1} \\
& \geq c(n,\gamma,\alpha) r^{1-\alpha} j_0 \left( j_0^{-\gamma}-j_0^{-\gamma-1} \right)^{n-1} \geq c(n,\gamma,\alpha) r^{1-\alpha} j_0^{1-\gamma(n-1)} \\
&\geq c(n,\gamma,\alpha) j_0^{(1-\alpha)(-1-\gamma)} j_0^{1-\gamma(n-1)} =   c(n,\gamma,\alpha) j_0^{(1+\gamma) \alpha -n \gamma}.
\end{align*}
The last term goes to infinity as $r\to 0,$ since $\alpha > \frac{n \gamma}{1+\gamma}$. 
Hence \eqref{e.muckdim} does not
hold if $\alpha > \frac{n \gamma}{1+\gamma}$, showing  
that $\Mu(E) \leq \frac{n \gamma}{1+\gamma}$. 
\end{proof}

\section{$A_p$-distance set that is not weakly porous}\label{s.apnwp}

In this section we construct a set $E \subset \R $ such
that $\dist(\cdot,E)^{-\alpha} \in  A_p\setminus A_1$
for  all $0<\alpha<1$ and all $1<p<\infty$;  see
Theorem~\ref{examplestatements}.
Recall that we abbreviate $d_E=\dist(\cdot,E)$.

Let $E_0=\lbrace 0,1 \rbrace$ and write $t_n = 1-\frac{1}{2n}$ for every $n \in \N$. 
Then, for  every $n\in \N,$ 
the set $E_n$ is defined as $E_n=E_{n-1} \cup E_{n-1}^1 \cup E_{n-1}^2,$ where: 

\begin{enumerate}
\item[$\bullet$] $E_{n-1}^1$ is a translation of $E_{n-1}$ dilated by the factor $t_n$ and whose first point is the last point of $E_{n-1},$
\item[$\bullet$] $E_{n-1}^2$ is a translation of $E_{n-1}$ whose first point is the last point of $E_{n-1}^1.$ 
\end{enumerate}
Finally, we define $E^+= \bigcup_{n=0}^\infty E_n$ and $E= E^+ \cup \left( - E^+ \right).$ Here $-E^+$ is the reflection of $E^+$ with respect to the origin. We let $Q_n,$ $Q_n^1,$ and $Q_n^2$ denote the smallest intervals containing $E_n$, $E_n^1,$ and $E_n^2$ respectively, for every $n\in \N \cup \lbrace 0 \rbrace.$ 
 See Figure~\ref{f.setE} for an illustration of the first steps of the construction. 

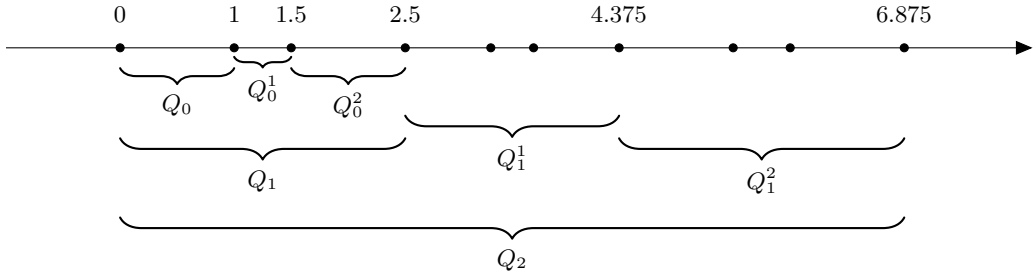
\begin{figure}[h!t]
\begin{center}
\begin{tikzpicture}[line cap=round,line join=round,>=triangle 45,scale=1.5]
\clip(-1,-2) rectangle (8,0.5);
\draw[->] (-1,0)--(8,0);
\begin{scriptsize}
\draw [fill=black] (0,0) circle (1pt);
\draw [fill=black] (1,0) circle (1pt);
\draw [fill=black] (1.5,0) circle (1pt);
\draw [fill=black] (2.5,0) circle (1pt);

\draw[color=black] (0,0.3) node {$0$};
\draw[color=black] (1,0.3) node {$1$};
\draw[color=black] (1.5,0.3) node {$1.5$};
\draw[color=black] (2.5,0.3) node {$2.5$};
\draw[color=black] (4.375,0.3) node {$4.375$};
\draw[color=black] (6.875,0.3) node {$6.875$};

\draw [thick,black,decorate,decoration={brace,amplitude=6pt,mirror},xshift=0pt,yshift=-0pt](0,-0.18) -- (1,-0.18) node[black,midway,yshift=-14pt] {$Q_0$};
\draw [thick,black,decorate,decoration={brace,amplitude=4pt,mirror},xshift=0pt,yshift=-0pt](1,-0.08) -- (1.5,-0.08) node[black,midway,yshift=-12pt] {$Q_0^1$};
\draw [thick,black,decorate,decoration={brace,amplitude=6pt,mirror},xshift=0pt,yshift=-0pt](1.5,-0.18) -- (2.5,-0.18) node[black,midway,yshift=-14pt] {$Q_0^2$};

\draw [fill=black] (3.25,0) circle (1pt);
\draw [fill=black] (3.625,0) circle (1pt);
\draw [fill=black] (4.375,0) circle (1pt);
\draw [fill=black] (5.375,0) circle (1pt);
\draw [fill=black] (5.875,0) circle (1pt);
\draw [fill=black] (6.875,0) circle (1pt);

\draw [thick,black,decorate,decoration={brace,amplitude=8pt,mirror},xshift=0pt,yshift=-0pt](0,-0.8) -- (2.5,-0.8) node[black,midway,yshift=-16pt] {$Q_1$};
\draw [thick,black,decorate,decoration={brace,amplitude=8pt,mirror},xshift=0pt,yshift=-0pt](2.5,-0.6) -- (4.375,-0.6) node[black,midway,yshift=-16pt] {$Q_1^1$};
\draw [thick,black,decorate,decoration={brace,amplitude=8pt,mirror},xshift=0pt,yshift=-0pt](4.375,-0.8) -- (6.875,-0.8) node[black,midway,yshift=-16pt] {$Q_1^2$};

\draw [thick,black,decorate,decoration={brace,amplitude=8pt,mirror},xshift=0pt,yshift=-0pt](0,-1.5) -- (6.875,-1.5) node[black,midway,yshift=-16pt] {$Q_2$};
\end{scriptsize}
\end{tikzpicture} 
\caption{First steps of the construction of the set $E$}
\label{f.setE}
\end{center}
\end{figure}

 During the rest of this section, we prove the following theorem
for the set $E$. 

\begin{theorem}\label{examplestatements}
 Let $E\subset\R$ be as constructed above.
Then it holds for all $0<\alpha<1$ and all $1<p<\infty$
that $\dist(\cdot,E)^{-\alpha} \in A_p \setminus A_1$. 
In particular,  
the set $E$ is not weakly porous and $\Mu(E)=0$.  
\end{theorem}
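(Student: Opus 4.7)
I would prove this theorem in three stages.

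\emph{Stage 1: $E$ is not weakly porous.} By induction from $E_n=E_{n-1}\cup E_{n-1}^1\cup E_{n-1}^2$, the gaps of $E_n$ within $Q_n$ are indexed by subsets $J\subseteq\{1,\dots,n\}$: there are $2^{n-|J|}$ gaps of common length $\prod_{j\in J}t_j$, and $\ell_n:=\lvert Q_n\rvert=\prod_{k=1}^n(2+t_k)$. Let $L_n(\delta)$ denote the total length of gaps in $E_n$ of size at least $\delta$. Then
\[
\frac{L_n(\delta)}{\ell_n}=\sum_{\substack{J\subseteq\{1,\dots,n\}\\ \prod_{j\in J}t_j\geq\delta}}\prod_{j\in J}\frac{t_j}{2+t_j}\prod_{j\notin J}\frac{2}{2+t_j}=\P\Bigl(\textstyle\prod_{k=1}^n t_k^{B_k}\geq\delta\Bigr),
\]
where $B_1,B_2,\dots$ are independent Bernoulli variables with $\P(B_k=1)=t_k/(2+t_k)$. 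Since $\log t_k\sim -1/(2k)$ and $t_k/(2+t_k)\to 1/3$, we have $\E\bigl[\sum_k B_k\log t_k\bigr]\sim -\tfrac{1}{6}\log n\to-\infty$ with bounded variance, so $\prod_k t_k^{B_k}\to 0$ in probability and $L_n(\delta)/\ell_n\to 0$ for every fixed $\delta>0$. Because the largest $E$-free subinterval of $P:=Q_n$ has length $1$, this violates \eqref{eq.firstconditionweakporosity} for $n$ large; hence $E$ is not weakly porous, $\Mu(E)=0$ by Corollary~\ref{c.wp_and_mu}, and $d_E^{-\alpha}\notin A_1$ for every $\alpha>0$ by Theorem~\ref{thm.main_intro}.

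\emph{Stage 2: $A_p$-product on $Q_n$.} Fix $\alpha\in(0,1)$, $p\in(1,\infty)$, and $\beta=\alpha/(p-1)$. For any gap $g$ of length $|g|$ and any $\gamma>-1$, direct integration gives $\int_g d_E^\gamma\,dx=|g|^{1+\gamma}/(2^\gamma(1+\gamma))$, so
\[
\int_{Q_n}d_E^\gamma\,dx=\frac{1}{2^\gamma(1+\gamma)}\prod_{k=1}^n(2+t_k^{1+\gamma}).
\]
Thus the $A_p$-product $\bigl(\vint_{Q_n}d_E^{-\alpha}\bigr)\bigl(\vint_{Q_n}d_E^{\beta}\bigr)^{p-1}$ equals $C(\alpha,p)$ times $\prod_{k=1}^n A_k$, where $A_k:=(2+t_k^{1-\alpha})(2+t_k^{1+\beta})^{p-1}/(2+t_k)^p$. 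Writing $t_k=1-\epsilon_k$ with $\epsilon_k=1/(2k)$ and using the expansion $t_k^s=1-s\epsilon_k+O(\epsilon_k^2)$, one finds $\log\bigl((2+t_k^s)/(2+t_k)\bigr)=(1-s)\epsilon_k/3+O(\epsilon_k^2)$; therefore $\log A_k=\tfrac{\epsilon_k}{3}\bigl[\alpha-(p-1)\beta\bigr]+O(\epsilon_k^2)=O(1/k^2)$, a summable sequence. Hence $\prod_k A_k$ converges and the $A_p$-product is bounded uniformly in $n$.

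\emph{Stage 3: General cubes.} For an arbitrary interval $I\subset\R$, the trivial bound $\vint_I f\leq(|J|/|I|)\vint_J f$ (valid for $I\subseteq J$ and $f\geq 0$) transfers an $A_p$-bound on $J$ to one on $I$ with a loss of at most $(|J|/|I|)^p$. One then produces $J\supseteq I$ with $|J|\leq C|I|$ of one of the following types: (a) $J$ is contained in a single gap of $E$, where the standard calculation for the weight $x^{-\alpha}$ gives the $A_p$-bound since $\alpha<1$; (b) $J$ is an affine image of some $Q_k$ appearing as a sub-copy in the construction tree of $E$, where the $A_p$-product on $J$ equals that on $Q_k$ by the affine invariance of $[w]_{A_p}$ and Stage 2 applies; or (c) $J=[-\ell_k,\ell_k]$, handled using the reflection symmetry $E=E^+\cup(-E^+)$. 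The main obstacle is exactly Stage 3: one must carefully locate a generic $I$ in the tree of nested sub-copies of $E$, especially for intervals that straddle sub-copy boundaries within some $Q_n$ or that cross the origin, so that a suitable $J$ of comparable size with controlled $A_p$-structure always exists. Once this is in place, the uniform bound from Stage 2 propagates to all cubes, yielding $d_E^{-\alpha}\in A_p$ for every $0<\alpha<1$ and $1<p<\infty$.
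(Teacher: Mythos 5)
Your Stages 1 and 2 are correct, and Stage 1 is in fact a genuinely different route from the paper: the paper proves $d_E^{-\alpha}\notin A_1$ directly (Lemma~\ref{lemmanotinA1}), by showing via the recursion of Lemma~\ref{lemmaaverageQNintermsofQN-1} that $\vint_{Q_n}d_E^{-\alpha}$ diverges like a harmonic-type product while $\essinf_{Q_n}d_E^{-\alpha}=2^\alpha$, and only then deduces non--weak-porosity from Theorem~\ref{thm.main_intro}; you instead prove non--weak-porosity combinatorially (your Bernoulli computation of the gap-length distribution is sound, modulo the harmless adjustment from subintervals to dyadic subcubes of $Q_n$ when applying \eqref{eq.firstconditionweakporosity}) and then read off $d_E^{-\alpha}\notin A_1$ from Theorem~\ref{thm.main_intro}. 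Your Stage 2 closed product formula is just a resummed version of Lemmas~\ref{lemmaaverageQNintermsofQN-1}--\ref{ApestimateforQN} and is fine.

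The genuine gap is Stage 3, and it is not a technicality: the reduction you propose, namely that every interval $I$ admits a comparable superinterval $J$ that is either (a) inside a single gap, (b) an affine copy of some $Q_k$ from the construction tree, or (c) a symmetric interval, is false as stated. Consider the junction point $z$ between $Q_{N-1}$ and $Q_{N-1}^1$ and an interval $I=[z-a,z+b]$ with $a,b$ comparable to $\lvert Q_M\rvert$ for some $M\ll N$. Such an $I$ contains many points of $E$ on both sides of $z$ (the nested right-end copies $Q_0^*\subset\cdots\subset Q_M^*$ of $Q_{N-1}$ and the left-end copies $t_NQ_0^*\subset\cdots\subset t_NQ_M^*$ of $Q_{N-1}^1$), so (a) fails; and the only intervals in the construction tree containing $z$ in their interior are $Q_N,Q_{N+1},\dots$, so the smallest affine copy of any $Q_k$ containing $I$ has length of order $3^N\gg 3^M\simeq\lvert I\rvert$, so (b) gives an unbounded loss $(\lvert J\rvert/\lvert I\rvert)^p$; (c) is irrelevant here. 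This straddling configuration is exactly what the paper's Lemma~\ref{Appropertyforpositivecubes}, cases (iv)--(v), is designed for: one does not find a single comparable copy, but uses the palindromic structure to dominate $\int_I d_E^\beta$ by $(1+t_N^{1+\beta})\int_{Q_{M+1}}d_E^\beta\le 2\int_{Q_{M+1}}d_E^\beta$ while $\lvert I\rvert\ge t_N\lvert Q_M\rvert\ge\lvert Q_{M+1}\rvert/6$, all wrapped in an induction on the level $N$ (which also justifies the scaling step for intervals inside $Q_{N-1}^1$, your type (b)). You flag this as "the main obstacle" but do not supply the argument, and without it the uniform bound of Stage 2 does not propagate to all cubes; so the proof of $d_E^{-\alpha}\in A_p$ is incomplete at precisely the point where the paper's proof does its real work.
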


\begin{proof}
 Let $0<\alpha<1$ and $1<p<\infty$. We show in Lemma~\ref{lemmanotinA1} that
$\dist(\cdot,E)^{-\alpha} \notin A_1$, and the claim
$\dist(\cdot,E)^{-\alpha} \in A_p$
 follows from Lemma~\ref{Appropertyforallcubes}. 
 Since $\dist(\cdot,E)^{-\alpha} \notin A_1$ for every $\alpha>0$, the set
$E$ is not weakly porous by Theorem~\ref{thm.main_intro},
and thus Corollary~\ref{c.wp_and_mu} implies
that $\Mu(E)=0$. 
\end{proof}

We say that a closed interval $I$ is an \emph{edge} of $E$
if the endpoints of $I$ are two consecutive points of $E.$
For every $n\in \N \cup \lbrace 0 \rbrace,$ the following 
 properties  hold:
\begin{enumerate}
\item[$\bullet$] Each of the intervals $Q_n,$ $Q_n^1,$ and $Q_n^2$  has  $3^n$ edges of $E,$ of which the middle ones for $n\ge 1$ have lengths equal to $t_1 t_2 \cdots t_n,$ $t_1 t_2 \cdots t_n t_{n+1},$ and $t_1 t_2 \cdots t_n$, respectively. 
\item[$\bullet$] Each of the intervals $Q_n$ and $Q_n^2$ contains translated copies of the intervals $Q_0, \ldots, Q_{n}$ distributed in a \emph{palindromic} manner: both $Q_n$ and $Q_n^2$ contain from left to right as well as from right to left intervals $Q_0^* \subset Q_1^* \subset \cdots \subset Q_{n}^*$ that are translated copies of $Q_0 \subset Q_1 \subset \cdots \subset Q_{n}$, respectively. 
\item[$\bullet$] Each interval $Q_{n}^1$  contains from left to right as well as from right to left intervals $t_{n+1} Q_0^* \subset t_{n+1} Q_1^* \subset \cdots \subset t_{n+1} Q_{n}^*$ that are translated copies of $Q_0 \subset Q_1 \subset \cdots \subset Q_{n}$ dilated by $t_{n+1}$.
\item[$\bullet$] $d_E= d_{E_n}$ on $Q_n.$
\item[$\bullet$] $\lvert Q_n \rvert = (2+t_n) \lvert Q_{n-1} \rvert$ for every $n\in \N.$ 
\end{enumerate}

\begin{lemma}\label{lemmaaverageQNintermsofQN-1}
For every $n \in \N$ and every $\beta >-1,$ we have
\[
\intav_{Q_n}  d_E(x)^\beta\,dx   = \frac{2+t_n^{1+\beta}}{2+t_n} \intav_{Q_{n-1}}   d_E(x)^\beta\,dx.  
\]
\end{lemma}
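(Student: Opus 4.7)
The plan is to decompose $Q_n$ as the almost-disjoint union $Q_{n-1}\cup Q_{n-1}^1\cup Q_{n-1}^2$ (the overlaps are single points, so have measure zero) and compute each of the three integrals separately by relating them back to $\int_{Q_{n-1}} d_E^\beta$. The key observation is that on each of the three subintervals, the distance to $E$ coincides with the distance to the relevant ``local'' copy of $E_{n-1}$, which is itself an affine image of $E_{n-1}$.

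First I would record the identity $d_E=d_{E_n}$ on $Q_n$ (already listed as a property of the construction) and verify the analogous local fact that for $y\in Q_{n-1}$ one has $d_{E_n}(y)=d_{E_{n-1}}(y)$, for $y\in Q_{n-1}^1$ one has $d_{E_n}(y)=d_{E_{n-1}^1}(y)$, and for $y\in Q_{n-1}^2$ one has $d_{E_n}(y)=d_{E_{n-1}^2}(y)$. This is immediate from the fact that $E_{n-1}^1$ begins at the last point of $E_{n-1}$ and $E_{n-1}^2$ begins at the last point of $E_{n-1}^1$, so the shared endpoint already lies in the ``own'' component of any given subcube and is at least as close as any point in a neighbouring component.

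Next, since $Q_{n-1}^2$ is a pure translate of $Q_{n-1}$ carrying $E_{n-1}$ to $E_{n-1}^2$, a translation in the variable gives
\[
\int_{Q_{n-1}^2} d_E(x)^\beta\,dx = \int_{Q_{n-1}} d_E(x)^\beta\,dx.
\]
For $Q_{n-1}^1$, which is the image of $Q_{n-1}$ under an affine map $\phi(x)=t_n x+c$ sending $E_{n-1}$ onto $E_{n-1}^1$, the distance function transforms as $d_{E_{n-1}^1}(\phi(x))=t_n d_{E_{n-1}}(x)$ and the Jacobian equals $t_n$, yielding
\[
\int_{Q_{n-1}^1} d_E(x)^\beta\,dx = t_n^{1+\beta}\int_{Q_{n-1}} d_E(x)^\beta\,dx.
\]
The assumption $\beta>-1$ is precisely what ensures all of these integrals are finite, since on each edge of length $\ell$ the integrand contributes a finite quantity proportional to $\ell^{1+\beta}$.

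Adding the three contributions gives $\int_{Q_n} d_E^\beta = (2+t_n^{1+\beta})\int_{Q_{n-1}} d_E^\beta$. Dividing by $|Q_n|=(2+t_n)|Q_{n-1}|$ (a property already listed) yields the claim. The whole argument is essentially a scaling/translation bookkeeping exercise, with the only substantive point being the verification that the nearest point of $E_n$ to any point of one of the three subintervals lies in the corresponding ``own'' component.
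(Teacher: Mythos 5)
Your proof is correct and follows essentially the same route as the paper: decompose $Q_n$ into $Q_{n-1}\cup Q_{n-1}^1\cup Q_{n-1}^2$, use translation invariance and the scaling factor $t_n^{1+\beta}$ on the pieces, and divide by $\lvert Q_n\rvert=(2+t_n)\lvert Q_{n-1}\rvert$. The only difference is that you spell out explicitly the fact (used implicitly in the paper) that on each subinterval the distance to $E_n$ coincides with the distance to the corresponding local copy of $E_{n-1}$, which is a correct and welcome clarification.
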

\begin{proof}
 Let $n\in\N$ and $\beta>-1$. 
By the construction of  $E$ and the definition of $Q_n$, we obtain 
\begin{align*}
\int_{Q_n} d_E^\beta & =\int_{Q_n} d_{E_n}^\beta =  \int_{Q_{n-1}} d_{E_{n-1}}^\beta +   \int_{Q_{n-1}^1} d_{E_{n-1}^1}^\beta +  \int_{Q_{n-1}^2} d_{E_{n-1}^2}^\beta \\
& = \left( 2 + t_n^{1+\beta} \right) \int_{Q_{n-1}} d_{E_{n-1}}^\beta = \left( 2 + t_n^{1+\beta} \right) \int_{Q_{n-1}} d_{E}^\beta.
\end{align*}
 The claim follows by combining the above  identity with the fact $\lvert Q_n \rvert = (2+t_n) \lvert Q_{n-1} \rvert$.
\end{proof}

\begin{lemma}\label{lemmachoiceindex}
For every  $0<\alpha<1$ and $1<p<\infty$,  there exists $N_0\in \N$,  only  depending on $\alpha$ and $p$, for which
\[
\log \left( \frac{ 2+t_n^{1-\alpha}}{2+t_n} \right) \geq  \frac{\alpha }{ 12n}  \quad \text{and} \quad 
\log \left[\left( \frac{ 2+t_n^{1-\alpha}}{2+t_n} \right) \left( \frac{2+t_n^{1+\frac{\alpha}{p-1}}}{2+t_n} \right)^{p-1}\right] \leq  
 \frac{\alpha^2 p}{18(p-1) n^2}  
\] 
for every $n \geq N_0.$  
\end{lemma}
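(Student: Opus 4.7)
The plan is to perform Taylor expansions in the small parameter $\eps_n := 1 - t_n = \frac{1}{2n}$, which tends to $0$ as $n\to\infty$. The basic identity I will use repeatedly is
\[
(1-\eps)^s - (1-\eps) = -(s-1)\eps + \tfrac{s(s-1)}{2}\eps^2 + O(\eps^3),
\]
applied with $s = 1-\alpha$ and $s = 1 + \tfrac{\alpha}{p-1}$. Together with the identity $2+t_n = 3-\eps_n$ and the expansion $\log(1+x) = x - \tfrac{x^2}{2} + O(x^3)$, each logarithm appearing in the statement can be expanded as a power series in $\eps_n$ to any desired order, where the implicit constants depend only on $\alpha$ and $p$.

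For the first inequality, a direct computation yields
\[
\log\!\left(\frac{2+t_n^{1-\alpha}}{2+t_n}\right)
= \frac{t_n^{1-\alpha}-t_n}{2+t_n} + O(\eps_n^2)
= \frac{\alpha}{3}\,\eps_n + O(\eps_n^2) = \frac{\alpha}{6n} + O(1/n^2).
\]
Since the leading coefficient $\tfrac{\alpha}{6}$ strictly exceeds $\tfrac{\alpha}{12}$, the first inequality holds for every $n$ larger than some $N_0(\alpha)$.

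For the second inequality, the two first-order terms cancel identically: the $\eps_n$-coefficient of the first logarithm is $\tfrac{\alpha}{3}$, while that of $(p-1)\log\bigl((2+t_n^{1+\alpha/(p-1)})/(2+t_n)\bigr)$ is $-\tfrac{\alpha}{3}$. Hence the Taylor expansion must be carried to second order, and the surviving $\eps_n^2$ contributions come from three distinct sources: the quadratic term of $(1-\eps)^s$, the expansion of $(3-\eps_n)^{-1}$, and the $-\tfrac{x^2}{2}$ correction in $\log(1+x)$. After combining everything, the $\alpha$-linear parts cancel (using $\tfrac{\alpha(\alpha-1)}{6}+\tfrac{\alpha}{6}=\tfrac{\alpha^2}{6}$), the expression factors as $\tfrac{\alpha^2}{9}\bigl(1+\tfrac{1}{p-1}\bigr)\eps_n^2 = \tfrac{\alpha^2 p}{9(p-1)}\eps_n^2$, and with $\eps_n = \tfrac{1}{2n}$ this becomes
\[
\log\!\left[\left(\frac{2+t_n^{1-\alpha}}{2+t_n}\right)\!\left(\frac{2+t_n^{1+\alpha/(p-1)}}{2+t_n}\right)^{\!p-1}\right] = \frac{\alpha^2 p}{36(p-1)\,n^2} + O(1/n^3).
\]
Since $\tfrac{1}{36} < \tfrac{1}{18}$ by a factor of two, the target bound $\tfrac{\alpha^2 p}{18(p-1)n^2}$ comfortably absorbs the $O(1/n^3)$ error for every $n$ larger than a threshold depending only on $\alpha$ and $p$. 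Enlarging $N_0$ to accommodate both inequalities completes the proof.

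The main obstacle is purely computational, not conceptual: one must verify the exact cancellation of the $\eps_n$-linear terms and assemble the $\eps_n^2$ coefficient from its three sources without arithmetic mistakes. The factor-of-two slack between $\tfrac{1}{36}$ and $\tfrac{1}{18}$ is the reason the constant in the statement is chosen as $\tfrac{1}{18}$ rather than pushed to its sharp limit, making the argument robust.
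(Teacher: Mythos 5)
Your proposal is correct and takes essentially the same route as the paper: both Taylor-expand the two logarithms around $t=1$ (your expansion in $\eps_n=1-t_n$ reproduces exactly the paper's values $f'(1)=-\tfrac{\alpha}{3}$, $g'(1)=0$, $g''(1)=\tfrac{2\alpha^2 p}{9(p-1)}$), and both exploit the same factor-of-two slack between the leading terms $\tfrac{\alpha}{6n}$ and $\tfrac{\alpha^2 p}{36(p-1)n^2}$ and the stated bounds to absorb the remainder for all $n$ beyond a threshold depending only on $\alpha$ and $p$.
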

\begin{proof}
Consider the functions 
\[
f(t)=\log \left( \frac{ 2+t^{1-\alpha}}{2+t} \right),\qquad 
g(t)=\log \left[\left( \frac{ 2+t^{1-\alpha}}{2+t} \right) \left( \frac{2+t^{1+\frac{\alpha}{p-1}}}{2+t} \right)^{p-1}\right]
\] for $t>0.$ These functions satisfy $f(1)=0,$ $f'(1)= -\frac{\alpha}{3},$ $g(1)=g'(1)=0$ and $g''(1)= \frac{2\alpha^2 p}{9(p-1)}.$ Let $\varepsilon \in (0,1/2)$ be small enough so that $\lvert t-1 \rvert \leq \varepsilon$ implies
\[
\lvert f(t)-f(1)-f'(1)(t-1) \rvert \leq \frac{\alpha}{6} \lvert t-1 \rvert
\]
and
\[
\lvert g(t)-g(1)-g'(1)(t-1)-\tfrac{1}{2}g''(1)(t-1)^2 \rvert \leq \frac{\alpha^2 p}{9(p-1)} \lvert t-1 \rvert^2.
\]
Taking $N_0 \in \N$ large enough so that $N_0 \geq 1/(2\varepsilon)$ 
 it follows that  $\lvert 1-t_n\rvert \leq \varepsilon$ for every $n \geq N_0,$ 
and  so  the above estimates yield
\[
f(t_n) \geq  \frac{\alpha }{12n}   \qquad \text{and} \qquad g(t_n)\leq   \frac{\alpha^2 p}{18(p-1) n^2}.  \qedhere
\]
\end{proof}

\begin{lemma}\label{lemmanotinA1}
For every  $0<\alpha<1$,  the weight $d_E^{-\alpha}$ does not belong to  $A_1.$   
\end{lemma}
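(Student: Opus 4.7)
The plan is to test the $A_1$ condition on the sequence of cubes $Q_n\subset\R$ and show that the averages $\intav_{Q_n}d_E^{-\alpha}$ blow up, whereas $\essinf_{Q_n}d_E^{-\alpha}$ stays bounded. The key input is the exact recursion provided by Lemma~\ref{lemmaaverageQNintermsofQN-1}, which with $\beta=-\alpha$ reads
\[
\intav_{Q_n} d_E(x)^{-\alpha}\,dx=\frac{2+t_n^{1-\alpha}}{2+t_n}\intav_{Q_{n-1}}d_E(x)^{-\alpha}\,dx.
\]

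First I would iterate this identity to obtain
\[
\intav_{Q_n}d_E(x)^{-\alpha}\,dx=\left(\prod_{k=1}^{n}\frac{2+t_k^{1-\alpha}}{2+t_k}\right)\intav_{Q_0}d_E(x)^{-\alpha}\,dx,
\]
and note that $\intav_{Q_0}d_E^{-\alpha}\,dx$ is a positive finite number since $d_E(x)=\min(x,1-x)$ on $Q_0=[0,1]$ and $\alpha<1$. Taking logarithms of the product and invoking the first estimate of Lemma~\ref{lemmachoiceindex} for $k\geq N_0$ gives
\[
\log\prod_{k=1}^{n}\frac{2+t_k^{1-\alpha}}{2+t_k}\geq\sum_{k=N_0}^{n}\frac{\alpha}{12k}\longrightarrow\infty
\]
as $n\to\infty$, since the harmonic series diverges. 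Consequently $\intav_{Q_n}d_E^{-\alpha}\,dx\to\infty$.

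Next I would bound $\essinf_{Q_n}d_E^{-\alpha}$ from above. By construction, $Q_n$ always contains the leftmost edge $[0,1]$ of $E$ (an edge of length one), and so the midpoint of that edge lies in $Q_n$ and has distance $1/2$ to $E$ (recall that $d_E=d_{E_n}$ on $Q_n$ and that the two points $0$ and $1$ are elements of $E_n$). Hence $\esssup_{Q_n}d_E\geq 1/2$, which yields
\[
\essinf_{Q_n}d_E(x)^{-\alpha}\leq 2^{\alpha}.
\]
Combining this with the divergence of the averages contradicts the $A_1$ inequality~\eqref{e.a_1} applied on the cubes $Q_n$, proving $d_E^{-\alpha}\notin A_1$.

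The only step that requires genuine care is justifying the upper bound on $\essinf_{Q_n}d_E^{-\alpha}$ (equivalently, the lower bound on $\esssup_{Q_n}d_E$); the rest is bookkeeping using the recursion and the explicit logarithmic lower bound from Lemma~\ref{lemmachoiceindex}. This step just uses the listed property $d_E=d_{E_n}$ on $Q_n$ together with the fact that $Q_n\supset Q_0=[0,1]$ contains a unit-length edge of $E$, so it is essentially immediate.
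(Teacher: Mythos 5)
Your proposal is correct and follows essentially the same route as the paper: iterate Lemma~\ref{lemmaaverageQNintermsofQN-1} with $\beta=-\alpha$, drop the (at least $1$) factors with $k<N_0$, use the first estimate of Lemma~\ref{lemmachoiceindex} and the divergence of the harmonic series to get $\intav_{Q_n}d_E^{-\alpha}\to\infty$, and bound $\essinf_{Q_n}d_E^{-\alpha}\le 2^\alpha$ via the unit-length edge of $E$ inside $Q_n$. No gaps; this is the paper's argument.
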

\begin{proof}
Let $N_0$ be the constant in Lemma~\ref{lemmachoiceindex}
with, say, $p=2$; the  value of $p$ is irrelevant here. Applying repeatedly 
Lemma~\ref{lemmaaverageQNintermsofQN-1}, we obtain, for every $n\in \N,$
\[
\intav_{Q_n} d_E^{-\alpha} = \left( \prod_{k=1}^n \frac{2+t_k^{1-\alpha}}{2+t_k}  \right) \intav_{Q_0} d_E^{-\alpha} \geq \left( \prod_{k=N_0}^n \frac{2+t_k^{1-\alpha}}{2+t_k}  \right) \intav_{Q_0} d_E^{-\alpha}.
\]
 By the first inequality of Lemma~\ref{lemmachoiceindex}, we have
\[
\log \left( \prod_{k=N_0}^n \frac{2+t_k^{1-\alpha}}{2+t_k}  \right)  = \sum_{k=N_0}^n \log \left( \frac{2+t_k^{1-\alpha}}{2+t_k}  \right) \geq  \sum_{k=N_0}^n \frac{\alpha }{ 12 k},
\]
 and it  follows that
\[
 \intav_{Q_n} d_E^{-\alpha} 
\ge \exp\left(\sum_{k=N_0}^n \frac{\alpha }{ 12 k}\right)\,\intav_{Q_0} d_E^{-\alpha}.
\] 
Since  the  harmonic series
diverges, we see that 
$\lim_{n\to\infty} \intav_{Q_n} d_E^{-\alpha} =\infty$. On the other hand, each $Q_n$ contains edges of $E$ of length equal to $1$, and  thus  $\essinf_{Q_n} d_E^{-\alpha} =2^\alpha.$ We conclude that $d_E^{-\alpha} \notin  A_1.$ 
\end{proof}

\begin{lemma}\label{ApestimateforQN}
For every  $0<\alpha<1$  and $1<p<\infty$, 
there exists a constant $\widehat{C} = \widehat{C}(\alpha,p)>0$ such that
\[
\intav_{Q_N} d_E(x)^{-\alpha}\,dx \left( \intav_{Q_N}  d_E(x)^{\frac{\alpha}{p-1}}\,dx \right)^{p-1} \leq \widehat{C}
\]
for every $N\in \N \cup \{0 \}$.  
\end{lemma}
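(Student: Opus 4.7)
The plan is to iterate Lemma~\ref{lemmaaverageQNintermsofQN-1} for both exponents $\beta=-\alpha$ and $\beta=\alpha/(p-1)$ (both lie in $(-1,\infty)$ since $0<\alpha<1$), and then compare the resulting product to a convergent series via the second estimate in Lemma~\ref{lemmachoiceindex}.

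First, I would apply Lemma~\ref{lemmaaverageQNintermsofQN-1} repeatedly to write
\[
\intav_{Q_N} d_E^{-\alpha}\,dx  = \left( \prod_{k=1}^N \frac{2+t_k^{1-\alpha}}{2+t_k}  \right) \intav_{Q_0} d_E^{-\alpha}\,dx,
\]
\[
\intav_{Q_N} d_E^{\alpha/(p-1)}\,dx  = \left( \prod_{k=1}^N \frac{2+t_k^{1+\alpha/(p-1)}}{2+t_k}  \right) \intav_{Q_0} d_E^{\alpha/(p-1)}\,dx.
\]
Multiplying the first expression by the $(p-1)$-st power of the second, the left-hand side of the claimed inequality factors as
\[
A_0 \cdot \prod_{k=1}^N \left[\frac{2+t_k^{1-\alpha}}{2+t_k}\left(\frac{2+t_k^{1+\alpha/(p-1)}}{2+t_k}\right)^{p-1}\right],
\]
where $A_0 = \intav_{Q_0} d_E^{-\alpha}\,dx \left(\intav_{Q_0} d_E^{\alpha/(p-1)}\,dx\right)^{p-1}$ is a finite constant depending only on $\alpha$ and $p$ (the integrals over $Q_0=[0,1]$ of a negative or positive power of $d_E(x)=\min\{x,1-x\}$ are elementary).

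Next I would take logarithms of the infinite product and split the sum at the index $N_0=N_0(\alpha,p)$ provided by Lemma~\ref{lemmachoiceindex}. For $k\ge N_0$, the second inequality of that lemma gives
\[
\log\!\left[\frac{2+t_k^{1-\alpha}}{2+t_k}\left(\frac{2+t_k^{1+\alpha/(p-1)}}{2+t_k}\right)^{p-1}\right] \le \frac{\alpha^2 p}{18(p-1) k^2},
\]
and since $\sum_{k=N_0}^\infty k^{-2}<\infty$, the tail of the logarithmic sum is bounded by a constant $M_1=M_1(\alpha,p)$. The finitely many terms with $k<N_0$ contribute a constant $M_0=M_0(\alpha,p)$, since each factor is finite and positive. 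Exponentiating and combining with $A_0$ yields the desired bound $\widehat{C}(\alpha,p) = A_0 \exp(M_0+M_1)$, uniformly in $N$.

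No step is really an obstacle here: the entire argument is a direct computation once Lemma~\ref{lemmaaverageQNintermsofQN-1} and Lemma~\ref{lemmachoiceindex} are in hand. The only subtlety is that the estimates on the factors are only effective for $k\ge N_0$, but the finitely many remaining factors are automatically absorbed into the constant, and one must make sure to verify that $\alpha/(p-1)>-1$, which is obvious, so that Lemma~\ref{lemmaaverageQNintermsofQN-1} may be applied to $\beta=\alpha/(p-1)$ as well as to $\beta=-\alpha$.
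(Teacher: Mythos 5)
Your proposal is correct and follows essentially the same route as the paper: iterate Lemma~\ref{lemmaaverageQNintermsofQN-1} for both exponents, take logarithms of the resulting product, and bound the tail $k\ge N_0$ by the convergent series from the second estimate of Lemma~\ref{lemmachoiceindex}, absorbing the finitely many initial factors and the $Q_0$ term into the constant. No gaps.
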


\begin{proof}
For $N=0$ the claim is clear. Assume that $N\ge 1$.  By  Lemma~\ref{lemmaaverageQNintermsofQN-1},
\begin{align*}
\intav_{Q_N} d_E^{-\alpha}\left( \intav_{Q_N} d_E^{\frac{\alpha}{p-1}} \right)^{p-1} & = \left( \prod_{n=1}^{N}  \frac{2+t_n^{1-\alpha}}{2+t_n} \right) \left( \prod_{n=1}^{N}  \frac{2+t_n^{1+\frac{\alpha}{p-1}}}{2+t_n}   \right)^{p-1} \intav_{Q_0} d_E^{-\alpha}\left( \intav_{Q_0} d_E^{\frac{\alpha}{p-1}} \right)^{p-1} \\
& =\prod_{n=1}^{N}  \left( \frac{2+t_n^{1-\alpha}}{2+t_n}  \right) \left( \frac{2+t_n^{1+\frac{\alpha}{p-1}}}{2+t_n}   \right)^{p-1}\intav_{Q_0} d_E^{-\alpha}\left( \intav_{Q_0} d_E^{\frac{\alpha}{p-1}} \right)^{p-1}.
\end{align*}
Let $N_0=N_0(\alpha,p)\in\N$ be as in Lemma \ref{lemmachoiceindex}.
Then
\begin{align*}
\log & \prod_{n=1}^{N}  \left( \frac{2+t_n^{1-\alpha}}{2+t_n}  \right) \left( \frac{2+t_n^{1+\frac{\alpha}{p-1}}}{2+t_n}   \right)^{p-1}    \\
& \leq  \sum_{n=1}^{N_0-1} \log \left[\left( \frac{2+t_n^{1-\alpha}}{2+t_n}  \right) \left( \frac{2+t_n^{1+\frac{\alpha}{p-1}}}{2+t_n}   \right)^{p-1}\right] 
  + \sum_{n=N_0}^N \frac{\alpha^2 p}{18(p-1) n^2},
\end{align*}
where the right-hand side is bounded from above by a constant $C_1 = C_1(\alpha,p)$ independent of $N$. 
 Hence, 
\[
\intav_{Q_N} d_E^{-\alpha}\left( \intav_{Q_N} d_E^{\frac{\alpha}{p-1}} \right)^{p-1}  \le e^{C_1} \intav_{Q_0} d_E^{-\alpha}\left( \intav_{Q_0} d_E^{\frac{\alpha}{p-1}} \right)^{p-1},
\]
 and the  claim follows.
\end{proof}

\begin{lemma}\label{Appropertyforpositivecubes}
For every  $0<\alpha<1$  and $1<p<\infty$, 
there exists a constant $C = C(\alpha,p)>0$ such that 
\begin{equation}\label{Apestimateforpositivecubes}
\intav_{Q } d_E(x)^{-\alpha}\,dx \left( \intav_{Q } d_E(x)^{\frac{\alpha}{p-1}}\,dx \right)^{p-1} \leq C
\end{equation}
for every interval $Q \subset [0, +\infty).$ 
\end{lemma}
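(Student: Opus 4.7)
The plan is to reduce to Lemma~\ref{ApestimateforQN} by analyzing how $Q$ fits into the hierarchy of sub-intervals obtained from the iterative decomposition $Q_n=Q_{n-1}\cup Q_{n-1}^1\cup Q_{n-1}^2$. Every sub-interval $J$ in this hierarchy is a translated and dilated copy of some $Q_m$, has both its endpoints in $E$, and hence satisfies $d_E|_J=d_{E\cap J}|_J$; by the resulting scale-invariance of the $A_p$ product, Lemma~\ref{ApestimateforQN} bounds the $A_p$ product on any such $J$ by $\widehat{C}(\alpha,p)$. If $Q$ is contained in a single edge $[u,v]$ of $E$, then $d_E(x)=\min(x-u,v-x)$ there, and since $0<\alpha<1$ the standard $A_p$ property of power weights on $\R$ yields a universal bound.

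Otherwise, I choose $M$ large enough that $Q\subset Q_M$ and let $J^*$ be the smallest sub-interval of the hierarchy of $Q_M$ containing $Q$, writing it as a copy of $Q_k$ scaled by $\tau$ (with $k\ge 1$). The three children of $J^*$ are copies of $Q_{k-1}$ with scales $\tau$, $\tau t_k$, $\tau$; by minimality $Q$ meets at least two of them. If $Q$ meets all three, then $Q$ contains the middle child in full, giving $|Q|\ge|J^*|/5$, so $\vint_Q d_E^\beta\le 5\vint_{J^*}d_E^\beta$ for every $\beta\in\R$ and the $A_p$ product on $Q$ is at most $5^p\widehat{C}$ by Lemma~\ref{ApestimateforQN}.

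The main case is when $Q$ meets exactly two adjacent children $J_1, J_2$, straddling their common endpoint $p\in E$. Using the palindromic nesting of copies of $Q_{k-2}\supset\cdots\supset Q_0$ at the right end of $J_1$ and at the left end of $J_2$, I construct matching-level sub-intervals $K_L\subset J_1$ (ending at $p$) and $K_R\subset J_2$ (starting at $p$), both copies of the same $Q_n$ but with scales $\tau$ and $\tau t_k$ respectively, chosen at the smallest level for which $|K_L|\ge p-a$ and $|K_R|\ge b-p$. Since successive sizes in each chain satisfy $|Q_j|/|Q_{j-1}|\le 3$ and $|K_L|/|K_R|=1/t_k\in(1,2]$, this gives $|K_L\cup K_R|\le 12|Q|$. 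The key identification is that, up to translation and an overall dilation by $\tau$, $K_L\cup K_R$ has exactly the shape of $Q_n$ placed adjacent on the right to a $t_k$-dilated translated copy of $Q_n$, so that a computation mirroring Lemma~\ref{lemmaaverageQNintermsofQN-1} gives
\begin{multline*}
\vint_{K_L\cup K_R} d_E^{-\alpha}\left(\vint_{K_L\cup K_R} d_E^{\alpha/(p-1)}\right)^{p-1}\\
=\frac{(1+t_k^{1-\alpha})(1+t_k^{1+\alpha/(p-1)})^{p-1}}{(1+t_k)^p}\,\vint_{Q_n}d_{E_n}^{-\alpha}\left(\vint_{Q_n}d_{E_n}^{\alpha/(p-1)}\right)^{p-1},
\end{multline*}
the $\tau$-dependence cancelling. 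Since $t_k\in[1/2,1)$, the prefactor is a continuous function of $t_k$ that equals $1$ at $t_k=1$ and is therefore bounded by some $C(\alpha,p)$; combined with Lemma~\ref{ApestimateforQN} and $|Q|/|K_L\cup K_R|\ge 1/12$, this yields the $A_p$ bound on $Q$. The sub-case where $Q$ meets $J_2, J_3$ is analogous, producing a scaled copy of $Q_n^1\cup Q_n^2$ with the same type of prefactor.

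The main obstacle will be verifying the matching-level identification: one must ensure that $K_L$ and $K_R$, as chosen from the palindromic chains in sibling children of $J^*$, really assemble into a scaled version of an adjacent pair of $Q_{n+1}$-like children, with dilation ratio the parent parameter $t_k$ rather than $t_{n+1}$. This rests on the fact that $J_1$ and $J_2$ are siblings in the hierarchy of $J^*$; once it is in place, the prefactor bound is immediate since $t_k\in[1/2,1)$ keeps both numerator and denominator of the prefactor bounded away from degeneracy.
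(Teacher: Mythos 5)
Your overall strategy is essentially the paper's: reduce to Lemma~\ref{ApestimateforQN} through the self-similar hierarchy and, in the straddling case, use the palindromic chains anchored at the common endpoint to sandwich $Q$ between two matched scaled copies of some $Q_n$, exactly as the paper's cases (iv)--(v) do with $Q_{M+1}^*$ and $t_NQ_{M+1}^*$ (you replace the induction by taking the minimal hierarchy interval $J^*$, and the paper's crude bound $\int_Q d_E^\beta\le(1+t_N^{1+\beta})\int_{Q_{M+1}}d_E^\beta\le 2\int_{Q_{M+1}}d_E^\beta$ by an exact prefactor computation in the spirit of Lemma~\ref{lemmaaverageQNintermsofQN-1}; both are fine, and your scale-invariance and prefactor computations are correct since $t_k\in[1/2,1)$).

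There is, however, a concrete gap in the main case: the inequality $|K_L\cup K_R|\le 12|Q|$ is justified only when the ``smallest admissible level'' $n$ is at least $1$, because the argument needs the level $n-1$ to fail one of the two covering conditions. If $Q$ straddles the point $p$ but both pieces are already shorter than the two edges adjacent to $p$ (i.e.\ $p-a\le\tau$ and $b-p\le\tau t_k$), the minimal level is $n=0$ and $|Q|$ can be arbitrarily small compared with $|K_L\cup K_R|$, so the average comparison $\vint_Q\le\frac{|K_L\cup K_R|}{|Q|}\vint_{K_L\cup K_R}$ gives nothing. Your only elementary case is ``$Q$ contained in a single edge'', which does not cover such intervals; you need the analogue of the paper's base case (intervals meeting at most a few points of $E$ have $A_p$ product bounded by a constant $C_1(\alpha,p)$), e.g.\ for $Q\ni p$ inside the two adjacent edges one bounds $\int_Q d_E^{\alpha/(p-1)}\le\int_Q|x-p|^{\alpha/(p-1)}$ and splits at $p$ to get $\int_Q d_E^{-\alpha}\le\frac{4}{1-\alpha}|Q|^{1-\alpha}$. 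A second, smaller point: when $Q\cap J_1$ exceeds the largest palindromic copy ($\tau$-scaled $Q_{k-2}$) inside $J_1$, your ``smallest level'' must be allowed to be $K_L=J_1$ and $K_R=J_2$ themselves (the paper's $m+1=N-1$ case); the union formula still holds there with ratio $t_k$, but this top-of-chain case should be stated, since otherwise the chosen level need not exist. With these two additions the argument closes.
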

\begin{proof}
Observe that $Q\subset Q_N$ for some $N\in\N$. When $Q$ contains at most $4$ points of $E,$ it is straightforward to see that the distance $d_E$ satisfies \eqref{Apestimateforpositivecubes} for $Q $ and with some constant $C_1$ only depending on $\alpha$ and $p.$ This includes the case where $Q $ is contained in $Q_1.$

We prove by induction on $N$ that $d_E$ satisfies \eqref{Apestimateforpositivecubes} for every interval $Q \subset Q_N$ with the constant $C=\max\lbrace 12^p \widehat{C}, C_1\rbrace$, where $\widehat{C}$ is the constant in Lemma~\ref{ApestimateforQN}. The case $N=1$ has already been proved since $C\ge C_1$.  Hence, we  assume that  the claim holds for all  $n=1, \ldots, N-1,$ 
and  we need to verify the claim for all  intervals $Q$ contained in $Q_N.$ 

The case where $Q \subset Q_{N-1}$ follows from the induction hypothesis. Thus we may and do assume that $Q$ is not contained in $Q_{N-1}.$ We do a case study.

\smallskip

\textbf{(i):} $Q$ is contained in one of the intervals $Q_{N-1}^1$, $Q_{N-1}^2.$ In the first case, 
the interval $Q\subset Q_{N-1}^1$  can be written as $Q=t_N Q^*$, where $Q^*$ is a translation of an interval $\widehat{Q}$ contained in $Q_{N-1}.$  Then  $\lvert Q \rvert = t_N \lvert \widehat{Q} \rvert$  and  $\int_Q d_E^\beta = t_N^{1+\beta} \int_{\widehat{Q}} d_E^\beta$ for every $\beta >-1.$ This gives 
\[
\intav_{Q } d_E^{-\alpha}\left( \intav_{Q } d_E^{\frac{\alpha}{p-1}} \right)^{p-1} = \left( t_N^{-\alpha} \intav_{\widehat{Q}} d_E^{-\alpha} \right) \left(  t_N^{\frac{\alpha}{p-1}}  \intav_{\widehat{Q}} d_E^{\frac{\alpha}{p-1}} \right)^{p-1} =\intav_{\widehat{Q}} d_E^{-\alpha}\left( \intav_{\widehat{Q}} d_E^{\frac{\alpha}{p-1}} \right)^{p-1}  \le C,
\]
 where the last inequality holds by  the induction hypothesis.  In the second case we have $Q\subset Q_{N-1}^2,$ and  inequality \eqref{Apestimateforpositivecubes} follows from the induction hypothesis since $Q$ is now translation of an interval $\widehat{Q}$ contained in $Q_{N-1}$. 

\smallskip

\textbf{(ii):} $Q$ intersects both $Q_{N-1}$ and $Q_{N-1}^2.$ This implies that $Q$ contains $Q_{N-1}^1,$ and so 
\[
\lvert Q \rvert \geq |Q_{N-1}^1|= t_N \lvert Q_{N-1} \rvert = \frac{t_N}{2+t_N} \lvert Q_N \rvert \geq \frac{1}{6} \lvert Q_N \rvert.
\]
Using this estimate together with Lemma \ref{ApestimateforQN}, we obtain
\begin{align*}
\intav_{Q } d_E^{-\alpha}\left( \intav_{Q } d_E^{\frac{\alpha}{p-1}} \right)^{p-1} &\leq \left( \frac{6}{ \lvert Q_N \rvert}\int_{Q } d_E^{-\alpha} \right)\left( \frac{6}{\lvert Q_N \rvert}\int_{Q } d_E^{\frac{\alpha}{p-1}} \right)^{p-1} \\&\leq 6^p \intav_{Q_N}  d_E^{-\alpha}\left( \intav_{Q_N} d_E^{\frac{\alpha}{p-1}} \right)^{p-1} \leq 6^p \widehat{C}.
\end{align*}

\smallskip

\textbf{(iii):} $Q$ contains one of the intervals $Q_{N-1},$ $Q_{N-1}^1$, $Q_{N-1}^2.$ In this case 
$|Q| \geq t_N \lvert Q_{N-1} \rvert \geq \frac{1}{6} \lvert Q_N \rvert.$ Using that $Q \subset Q_N,$ the desired estimate follows as in  the  case \textbf{(ii)}.

\smallskip

\textbf{(iv):}  Assume  that $Q \cap Q_{N-1} \neq \emptyset \neq Q \cap Q_{N-1}^1$  but  $Q \cap Q_{N-1}^2=\emptyset.$ By the construction of $Q_{N-1}$,  we  can find $m\in \lbrace -1, 0, \ldots, N-2 \rbrace$ so that $ Q_m^* \subset Q \cap Q_{N-1} \subset Q_{m+1}^* ,$ where $Q_m^*$ and $Q_{m+1}^*$ are translations of $Q_m$ and $Q_{m+1}$ respectively, and we  use the notation  $Q_{-1}^*=\emptyset.$ This implies $\lvert Q \cap Q_{N-1} \rvert \geq \lvert Q_{m} \rvert.$ 
 Similarly, by  the construction of $Q_{N-1}^1,$ there exists $n \in \lbrace -1, 0, \ldots, N-2 \rbrace$ so that $t_N Q_n^* \subset Q \cap Q_{N-1}^1 \subset t_N Q_{n+1}^* ,$ where $Q_n^*$ and $Q_{n+1}^*$ are translations of $Q_n$ and $Q_{n+1}$, respectively,  and so  $\lvert Q \cap Q_{N-1}^1 \rvert \geq t_N \lvert Q_n \rvert.$ Now define $M=\max\lbrace m,n\rbrace.$ If $M=-1,$ then $Q$ intersects at most $2$ edges of $E,$ and the desired estimate follows with the constant $C_1$ from the beginning of the proof. If $M \ge 0,$ then we have $Q \cap Q_{N-1} \subset Q_{M+1}^*$ and $Q \cap Q_{N-1}^1 \subset t_N Q_{M+1}^*,$ and so
\[
\int_Q d_E^\beta \leq \int_{Q_{M+1} }d_E^\beta + t_N^{1+\beta}\int_{Q_{M+1} }d_E^\beta = \left( 1+  t_N^{1+\beta} \right) \int_{Q_{M+1} }d_E^\beta \leq 2 \int_{Q_{M+1} }d_E^\beta ,
\]
for every $\beta >-1.$ On the other hand, 
\begin{align*}
\lvert Q \rvert &= \lvert Q \cap Q_{N-1} \rvert + \lvert Q \cap Q_{N-1}^1 \rvert 
\geq \lvert Q_{m} \rvert + t_N \lvert Q_{n} \rvert 
\\& \ge t_N \lvert Q_{M} \rvert = \frac{t_N}{2+ t_{M+1}}  \lvert Q_{M+1} \rvert \geq \frac{|Q_{M+1}|}{6}.
\end{align*}
This leads us to
\[
\intav_{Q } d_E^{-\alpha}\left( \intav_{Q } d_E^{\frac{\alpha}{p-1}} \right)^{p-1} 
\leq 12^p \intav_{Q_{M+1}} d_E^{-\alpha}\biggl( \intav_{Q_{M+1}} d_E^{\frac{\alpha}{p-1}} \biggr)^{p-1} \leq 12^p \widehat{C},
\]
where the last inequality follows from Lemma~\ref{ApestimateforQN}. 

\smallskip

\textbf{(v):}  Assume  that $Q \cap Q_{N-1}^1 \neq \emptyset \neq Q \cap Q_{N-1}^2$  but  $Q \cap Q_{N-1}=\emptyset.$  Recall  that $Q_{N-1}^2$ is a translation of $Q_{N-1}$ that contains, from left to right, translated copies $Q_0^* \subset Q_1^* \subset \cdots \subset Q_{N-2}^* \subset Q_{N-1}^*$ of $Q_0\subset Q_1 \subset \cdots \subset Q_{N-2}  \subset Q_{N-1}$, respectively.  In addition,  $Q_{N-1}^1$ contains, from right to left, translated copies $t_N Q_{N-1}^* \supset t_N Q_{N-2}^* \supset \cdots \supset  t_N Q_1^* \supset t_N Q_0^* $ of $Q_{N-1}  \supset Q_{N-2}  \supset \cdots \supset  Q_1  \supset Q_0$ dilated by $t_N.$ Now, the argument is identical to  the  case~\textbf{(iv)}.
\end{proof}

\begin{lemma}\label{Appropertyforallcubes}
Let  $0<\alpha<1$ and $1<p<\infty$, and let  $C=C(\alpha,p)$ be the constant in Lemma~\textup{\ref{Appropertyforpositivecubes}}.
Then
\begin{equation} 
\intav_{Q } d_E(x)^{-\alpha}\,dx  \left( \intav_{Q }  d_E(x)^{\frac{\alpha}{p-1}}\,dx \right)^{p-1} \leq 2^p C
\end{equation}
for every interval $Q \subset \R$, and so  $d_E^{-\alpha} \in A_p.$ 
\end{lemma}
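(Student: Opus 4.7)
My plan is to reduce the general case to Lemma~\ref{Appropertyforpositivecubes} by exploiting the symmetry $E = E^+ \cup (-E^+)$, which implies $d_E(x) = d_E(-x)$ for every $x \in \R$. There are essentially three cases depending on the position of the cube $Q \subset \R$ relative to the origin, and each one reduces to a cube contained in $[0, +\infty)$.

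First, if $Q \subset (-\infty, 0]$, I would apply the reflection $x \mapsto -x$. Since $d_E$ is even, the change of variables preserves the averages $\intav_Q d_E^{-\alpha}$ and $\intav_Q d_E^{\alpha/(p-1)}$, while the reflected cube $-Q$ lies in $[0, +\infty)$. The bound $C$ from Lemma~\ref{Appropertyforpositivecubes} then applies directly.

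Next, for the remaining case where $0$ is an interior point of $Q = [a,b]$ with $a<0<b$, I would introduce the symmetric enlargement $Q^* = [-r, r]$ where $r = \max\{|a|, b\}$. Then $Q \subset Q^*$ and $|Q^*| = 2r \le 2(b-a) = 2|Q|$. Using the symmetry of $E$ together with the change of variables $x\mapsto -x$, I have $\int_{Q^*} d_E^{\beta} = 2 \int_{[0,r]} d_E^\beta$ for every $\beta > -1$, so
\[
\intav_{Q^*} d_E^\beta = \intav_{[0,r]} d_E^\beta.
\]
Applying Lemma~\ref{Appropertyforpositivecubes} to the interval $[0,r] \subset [0,+\infty)$ gives
\[
\intav_{Q^*} d_E^{-\alpha}\left( \intav_{Q^*} d_E^{\alpha/(p-1)} \right)^{p-1} \leq C.
\]
Finally, since $Q \subset Q^*$ and $|Q^*| \le 2|Q|$, I can estimate
\[
\intav_Q d_E^\beta = \frac{1}{|Q|} \int_Q d_E^\beta \leq \frac{|Q^*|}{|Q|} \intav_{Q^*} d_E^\beta \leq 2 \intav_{Q^*} d_E^\beta
\]
for $\beta \in \{-\alpha,\, \alpha/(p-1)\}$, and multiplying these yields the bound $2 \cdot 2^{p-1} \cdot C = 2^p C$, as desired. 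The membership $d_E^{-\alpha} \in A_p$ then follows immediately from Definition~\ref{d.A_p}. None of the steps involves a serious obstacle; the only observation that must be made carefully is that the symmetry of $E$ about $0$ allows the symmetric cube $Q^*$ to inherit the $A_p$-type bound from one half of it.
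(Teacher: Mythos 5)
Your proof is correct and follows essentially the same route as the paper: both exploit the symmetry of $E$ about the origin to reduce an arbitrary cube to one contained in $[0,+\infty)$ and then invoke Lemma~\ref{Appropertyforpositivecubes}, arriving at the same constant $2^pC$. The only (immaterial) difference is that you enclose $Q$ in the symmetric interval $[-r,r]$ and use $|[-r,r]|\le 2|Q|$, whereas the paper folds $Q$ onto the larger of $Q\cap[0,\infty)$ and $-(Q\cap(-\infty,0])$, which has measure at most $|Q|$.
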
 
\begin{proof}
Given an interval $Q \subset \R$,  we write  $Q^+=Q \cap [0,+ \infty)$ and $Q^-=Q \cap (-\infty, 0].$ 
 Let $Q^*$ be the largest of the intervals $Q^+$ and $-Q^-$, that is, $Q^*\in\lbrace Q^+, -Q^- \rbrace$
and $Q^+ \cup -Q^-\subset Q^*$.  Here
$-Q^{-}$ denotes the reflection of $Q^{-}$  with respect to  the origin. 
Because $E$ is symmetric  with respect to  the origin, we can write
\[
\int_Q d_E^{-\alpha} = \int_{Q^+} d_E^{-\alpha} + \int_{Q^-} d_E^{-\alpha} = \int_{Q^+} d_E^{-\alpha} + \int_{-Q^-} d_E^{-\alpha} \leq 2 \int_{Q^*} d_E^{-\alpha}.
\]
The same argument shows that $\int_Q  d_E^{\frac{\alpha}{p-1}} \leq 2 \int_{Q^*}  d_E^{\frac{\alpha}{p-1}}.$ Because $\lvert Q\rvert\ge \lvert Q^*\rvert$ and $Q^*$ is contained in $[0,\infty),$ we can use Lemma \ref{Appropertyforpositivecubes} to conclude
 that 
\[
\intav_{Q } d_E^{-\alpha}\left( \intav_{Q } d_E^{\frac{\alpha}{p-1}} \right)^{p-1} \leq 2^p \intav_{Q^*} d_E^{-\alpha}\left( \intav_{Q^*} d_E^{\frac{\alpha}{p-1}} \right)^{p-1} \leq 2^p  C. \qedhere
\] 
\end{proof}

\bibliographystyle{abbrv}

\end{document}